\makeatletter \@addtoreset{equation}{section} \makeatother
\renewcommand\thetable{\thesection.\@arabic\c@table}
\theoremstyle{plain}
\newtheorem{maintheorem}{Theorem}
\newtheorem{theorem}{Theorem }[section]
\newtheorem{proposition}[theorem]{Proposition}
\newtheorem{lemma}[theorem]{Lemma}
\newtheorem{maincorollary}{Corollary}
\theoremstyle{definition} \theoremstyle{remark}
\newtheorem{remark}[theorem]{Remark}
\newcommand{\vep}{\varepsilon}
\newcommand{\cF}{\mathcal{F}}
\newcommand{\cA}{\mathcal{A}}
\theoremstyle{remark}
\newtheorem{defi}{\textup{Definition}}[section]
\newtheorem{rem}[defi]{\textup{Remark}}
\theoremstyle{definition}
\theoremstyle{plain}
\begin{document}

\title{Specification and partial hyperbolicity for flows}

 \begin{abstract} 
We prove that if a flow exhibits a partially hyperbolic attractor $\Lambda$ with splitting 
$T_\Lambda M=E^s\oplus E^c$  and two periodic saddles with different indices such that
the stable index of one of them coincides with the dimension of $E^s$ then it does not satisfy 
the specification property. In particular, every singular-hyperbolic attractor with the 
specification property is hyperbolic.
As an application, we prove that no Lorenz attractor satisfies the specification property.
 \end{abstract}

\author[N. Sumi]{Naoya Sumi}
\address{Department of Mathematics, Faculty of Science \\
Kumamoto University \\
2-39-1 Kurokami, Kumamoto-shi, Kumamoto, 860-8555, JAPAN
}
\email{sumi@sci.kumamoto-u.ac.jp}

\author[P. Varandas]{Paulo Varandas}
\address{Departamento de Matem\'atica, Universidade Federal da Bahia\\
Av. Ademar de Barros s/n, 40170-110 Salvador, Brazil  \\ \& 
CMUP - University of Porto, Portugal.}
\email{paulo.varandas@ufba.br }

\author[K. Yamamoto]{Kenichiro Yamamoto}
\address{Department of General Education \\
Nagaoka University of Technology \\
Niigata 940-2188, JAPAN
}
\email{k\_yamamoto@vos.nagaokaut.ac.jp}

\date{\today}
\keywords{Partially hyperbolic flows; Geometric Lorenz attractors;  Specification property; Periodic points}

\maketitle

\section{Introduction}

The purpose of this paper is to give a characterization of $C^1$-flows on 
compact Riemannian manifolds that have attractors with the specification property.
The specification property for maps and flows was introduced by Bowen in \cite{Bo71,Bo72} 
and roughly means that an arbitrary number of pieces of orbits can be ``glued" to obtain a real orbit that 
shadows the previous ones.  The relevance in the study of this property is that it plays a key role e.g.
in the study of the uniqueness of equilibrium states (\cite{B}), large deviations theory (\cite{Y}) 
and multifractal analysis (\cite{TaV00,TaV03}).
Those are some of the reasons for which dynamical systems satisfying the specification property 
have been intensively studied from an ergodic viewpoint \cite{B,S, Va12, OT} and from an algebraic 
viewpoint \cite{ADK,L}.  This justifies the interest of many researchers to obtain weaker forms of 
specification (see e.g.~\cite{OT,PS05,Th10,Va12} and references therein).

Using that the specification property is well known to imply topologically mixing (see \cite{DGS}),
 a first conceptual difference appears when considering the specification property in the discrete
or the continuous time setting. Indeed while, up to consider a finite iterate, every uniformly hyperbolic diffeomorphism restricted to every basic piece satisfies the specification property (see~\cite{KH})
there are simple constructions of uniformly hyperbolic flows (e.g. obtained as suspension of a transitive 
Anosov diffeomorphism by a constant roof function) that are not even topologically mixing.

In the nineties, Palis proposed a conjecture for a 
global view of dynamics which has been a routing guide for many works in the last years, which we 
describe here in the space $C^1$-diffeomorphisms and flows: either the dynamics is 
uniformly hyperbolic or it can be $C^1$-approximated by one other that exhibits a homoclinic 
tangency or a heteroclinic cycle. In rough terms, in the complement of uniform hyperbolicity (open condition)
the mechanisms that generate non-hyperbolicity in a dense way are tangencies and cycles.
We refer the reader to the surveys \cite{Bonatti,Palis} for reports on the advances towards the conjecture
and the current state of the conjecture.

Palis and the $C^1$-stability conjectures (c.f. \cite{Hayashi,Mane, Wen,GW06}) inspired the works of many authors
to approach such dichotomy in the space of $C^1$-diffeomorphisms concerning other important dynamical properties  
that are not necessarily $C^1$-open, namely, expansiveness, shadowing or specification properties. 
In \cite{Arbietoexp,PT,S,SSY} it was proved that the
$C^1$-interior of the set of all $C^1$-diffeomorphisms satisfying any of 
these properties is contained in the set of uniformly hyperbolic diffeomorphisms.

Let us describe more carefully some results concerning the characterization of diffeomorphisms with the specification property. In \cite{SSY}, Sakai together with the first and third authors
proved that the $C^1$-interior of the set of all diffeomorphisms satisfying the specification property
coincides with the set of all transitive Anosov diffeomorphisms.
Moriyasu, Sakai and the third author extended the above results to
regular maps, and proved that $C^1$-generically, regular maps satisfy the specification property
if and only if they are transitive Anosov (\cite{MSY}).
In \cite{SVY} we proved that the presence of periodic points with different indexes is an obstruction for
specification even for partially hyperbolic diffeomorphisms.
Owing to these results, the relation between specification and hyperbolicity for $C^1$-diffeomorphisms
turns out to be clear.

The characterization of the smooth flows with the specification property in comparison to the discrete time 
setting presents both conceptual and technical difficulties. The fact that critical elements for flows include 
not only periodic orbits as singularities constitutes an obstacle to follow the same lines of the argument 
in~\cite{SSY}.
Arbieto, Senos and Todero~\cite{AST} were able to overcome these difficulties and
proved that if a flow $(X_t)_{t\in \mathbb R}$ satisfies the weak specification property robustly on an isolated invariant set  $\Lambda$ then $\Lambda$ is is a topologically mixing  hyperbolic set.
Thus, if $X$ is a vector field which has the weak specification property $C^1$-robustly then it generates  
a topologically mixing Anosov flow.  The authors of \cite{AST} proved first that robust specification would 
lead to sectional-hyperbolicity and then, they used robustness and perturbative techniques to rule 
out singularities and deduce uniform hyperbolicity. 

Given the current interest in a global description of dynamical systems it is natural to ask whether
the weak specification property can hold generically or at least densely in the complement of 
the set of uniformly hyperbolic flows.  Here we are able to prove that robustness assumption can
be dropped from the assumptions of \cite{AST}. We prove that every 
sectional-hyperbolic flow with specification is indeed uniformly hyperbolic (see Theorem~\ref{thm:main1}). 
This follows from an abstract criterium which asserts that any partially hyperbolic attractor for a flow 
$(X_t)_{t\in \mathbb R}$ with  specification cannot have critical elements with different indexes (see 
Theorem~\ref{thm:main2} for the precise statement).

The paper is organized as follows. In Section~\ref{Stat} we introduce some definitions and state our main
theorems and some corollaries. In Section~\ref{Aux} we prove some auxiliary lemmas that will play a key role in the proof of the main result. The proof of Theorem~B is given in Section~\ref{proofs}. Finally, in Section~\ref{proofs2} we prove the 
corollaries.

\section{Preliminaries and statement of the main results}\label{Stat}

Throughout, let $M$ be a $C^{\infty}$ compact connected boundaryless Riemannian manifold of dimension 
$\dim M\ge 3$ and let $d$ be the distance on $M$ induced from a Riemannian metric $\|\cdot\|$ on the tangent bundle $TM$. Denote by $\mathfrak{X}^1(M)$ the set of all $C^1$-vector fields on $M$
endowed with the $C^1$-topology.
Hereafter let $X\in\mathfrak{X}^1(M)$.
Then $X$ generates a $C^1$ flow $(X_t)_{t\in \mathbb R}$
on $M$. 

A set $\Lambda\subset M$ is said to be \emph{invariant} if
$X_t(\Lambda)=\Lambda$ holds for any $t\in\mathbb{R}$.
Let $\Lambda\subset$ be a compact invariant set.
We say that $\Lambda$ is \emph{transitive} if there exists a point $x\in\Lambda$
such that $\{X_t(x):t\in\mathbb{R}\}$ is dense in $\Lambda$.
$\Lambda$ is said to be an \emph{attractor} if it is transitive and there exists
an open neighborhood $U\subset M$ of $\Lambda$ such that $X_t(\bar U)\subset U$ for $t>0$ and
$\Lambda=\bigcap_{t\ge 0}X_t(U)$.

We say that $F_{\Lambda}=\{F_x\}_{x\in\Lambda}\subset T_{\Lambda}M$ is a \emph{subbundle} over $\Lambda\subset M$
if each $F_x$ is a linear subspace of $T_xM$ and a map $x\in\Lambda\mapsto F_x$ is continuous.
A subbundle $F_{\Lambda}$ over an invariant set $\Lambda$ is said to be \emph{invariant} if
$D_xX_t(F_x)=F_{X_t(x)}$ holds for every $x\in\Lambda$ and every $t\in\mathbb{R}$.

We say that a compact invariant set $\Lambda\subset M$ is a \emph{hyperbolic set} for $(X_t)_{t\in \mathbb R}$ if there exists 
a continuous invariant splitting $T_\Lambda M=F^s\oplus F^c\oplus F^u$ such that $F_x^c$ is the subspace generated by $X(x)$ and there are constants $C>0$ and $\lambda \in (0,1)$ so that  
$\|D_xX_t\mid F_x^s\| \le C \lambda^t$ and $\|(D_xX_t\mid F_x^u)^{-1}\| \le C \lambda^t$
for every $t\ge 0$ and $x\in \Lambda$. If $\Lambda$ is a hyperbolic set for the flow and $\Lambda=M$ then 
$(X_t)_{t\in \mathbb R}$ is called an \emph{Anosov flow}. 

A point $p \in M$ is a \emph{singularity} for $X$ if $X(p)=0$
and is called a \emph{regular point} otherwise.
We say that a singularity $p$ is \emph{hyperbolic} if the one-point invariant set $\{ p \}$ is a hyperbolic set.
A point $p\in M$ is \emph{periodic} if there exists a minimum period
$T>0$ so that $X_T(p)=p$ and we say that $p$ is a \emph{periodic hyperbolic point} if the orbit
$\mathcal O(p)=\cup_{t\in [0,T]} X_t(p)$ is a hyperbolic set for $X$. 
Finally, (an orbit of) a point $x$ by the flow is called a \emph{critical element} if it is either periodic or $x$ is a singularity.

We say that a compact $(X_t)_{t\in\mathbb R}$-invariant set $\Lambda\subset M$ is \emph{partially hyperbolic} if there are a continuous
invariant splitting $T_{\Lambda}M=E^s\oplus E^c$, constants $C>0$ and $\lambda\in (0,1)$ so that
\begin{equation*}
\|D_xX_t|E_x^s\|\le C\lambda^t \text{ and } \|D_xX_t|E_x^s\|~ \|D_{X_t(x)}X_{-t}|E^c_{X_t(x)}\|\le C\lambda^t
\end{equation*}
for every $x\in\Lambda$ and $t\ge 0$.
If, in addition, the following two conditions (i) and (ii) hold, then we say that $\Lambda$ is \emph{sectional-hyperbolic}:
\begin{itemize}
\item[(i)] every singularity $p\in\Lambda$ is hyperbolic;
\item[(ii)] $E^c$ is sectionally expanding, i.e. $\dim E^c\ge 2$ and $|\det(D_xX_t\mid_{L_x})|\ge C^{-1}\lambda^t$ for every $x\in\Lambda$, $t\ge 0$, and every two-dimensional subspace $L_x\subset E^c_x$.
\end{itemize}
With some abuse of notation, we say that the flow $(X_t)_{t\in \mathbb R}$ is partially hyperbolic if $M$ is a partially hyperbolic set.
Let us also mention that the notions of sectional hyperbolicity and singular-hyperbolicity coincide for three-dimensional flows, where the later arose in the characterization of robustly transitive attractors in dimension three. 
We observe that if a sectional hyperbolic flow does not have singularities 
then it is necessarily hyperbolic (see e.g. \cite{MM} for more details). 

We say that a compact $(X_t)_{t\in\mathbb R}$-invariant subset $\Lambda\subset M$ 
has the  \emph{specification property} if for any $\epsilon> 0$ there exists a $T= T(\epsilon)>0$ such that
the following property holds: given any finite collection of intervals $I_i=[a_i, b_i] \subset \mathbb R$ 
$i=1\dots m$ satisfying $a_{i+1} - b_i\ge T(\epsilon)$ for every $i$ and every map
$P:\bigcup_{i=1}^m I_i \to \Lambda$ such that $X_{t_2}(P(t_1))=X_{t_1}(P(t_2))$ for any $t_1,t_2\in I_i$ 
there exists  $x\in\Lambda$ so that $d(X_t(x), P(t)) < \epsilon$ for all $t\in \bigcup_i I_i$. 
When the previous shadowing property is required only to specifications made by two pieces of orbits 
($m=2$ above) we shall refer to this as the \emph{weak specification property}.
$\Lambda$ is said to be \emph{topologically mixing} if for all non-empty open sets $U$ and $V$ of $\Lambda$ we can take $N>0$ such that 
\begin{equation*}
U \cap X_t (V) \not= \emptyset, n\ge N.
\end{equation*}
Then it is known that topological mixing implies transitivity.
In \cite[Lemma~3.1]{AST} it was proved that if $\Lambda$ has the weak specification property then $\Lambda$ is topologically mixing.
In particular, this property implies that a flow has neither sources nor sinks in $\Lambda$.
After \cite{AST} it is natural to ask which Lorenz attractors satisfy the specification property. 
Recall that Lorenz attractors do not satisfy the shadowing property with rare exceptions (c.f. \cite{Ko}). 
Here we answer this question.

\begin{maintheorem}\label{thm:main1}
Every transitive sectional-hyperbolic attractor is either hyperbolic or does not satisfy the weak specification property.
\end{maintheorem}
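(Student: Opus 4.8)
The plan is to derive Theorem~\ref{thm:main1} from Theorem~\ref{thm:main2}: I will show that a transitive sectional-hyperbolic attractor which is not hyperbolic always contains two critical elements of different indices, one of which has stable index equal to $\dim E^s$, and then invoke Theorem~\ref{thm:main2}. So let $\Lambda$ be a transitive sectional-hyperbolic attractor and assume it is not hyperbolic. Since a sectional-hyperbolic compact invariant set with no singularities is uniformly hyperbolic (cf.~\cite{MM}), while a connected hyperbolic set cannot contain a singularity together with regular points, the failure of hyperbolicity forces $\Lambda$ to contain a singularity $\sigma$; by item~(i) in the definition of sectional hyperbolicity $\sigma$ is hyperbolic. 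Write $T_\Lambda M=E^s\oplus E^c$ for the partially hyperbolic splitting and set $s:=\dim E^s$, so that $\dim E^c\ge 2$.

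First I would determine the stable index of $\sigma$. At a singularity $E^c_\sigma$ is a genuine $D_\sigma X_t$-invariant subspace, and item~(ii) says that volumes of $2$-planes inside $E^c_\sigma$ are exponentially expanded; hence $E^c_\sigma$ admits at most one contracting direction, so the stable index of $\sigma$ is $s$ or $s+1$. The value $s$ must be ruled out: using that $\Lambda$ is a nontrivial attractor, that the unstable set $W^u(\sigma)$ sits inside $\Lambda$, and that the strong stable leaf through $\sigma$ tangent to $E^s$ meets $\Lambda$ only at $\sigma$, one shows that $\sigma$ is Lorenz-like, i.e.\ $E^c_\sigma$ does have a contracting direction; this is exactly the kind of structural information prepared by the lemmas of Section~\ref{Aux}. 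Therefore the stable index of $\sigma$ equals $s+1$.

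Next I would exhibit a periodic orbit of $\Lambda$ of stable index $s$. Being a nontrivial transitive sectional-hyperbolic attractor, $\Lambda$ contains hyperbolic periodic orbits; along any such orbit $\gamma$ the bundle $E^s$ is uniformly contracted, while $E^c|_{\gamma}$ contains the one-dimensional flow direction (on which the exponent along $\gamma$ is zero) and, being sectionally expanding of dimension $\ge 2$, expands the complementary center direction. Hence $\gamma$ has stable index exactly $s$. Thus $\sigma$ and $\gamma$ are two critical elements of $\Lambda$ with distinct indices $s+1$ and $s$, the second equal to $\dim E^s$, and $\Lambda$ is by hypothesis a partially hyperbolic attractor; Theorem~\ref{thm:main2} then yields that $\Lambda$ does not satisfy the weak specification property. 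Consequently a transitive sectional-hyperbolic attractor enjoying the weak specification property has no singularity, hence is hyperbolic, which is Theorem~\ref{thm:main1}.

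The step I expect to be the main obstacle is the rigidity claim that $\sigma$ is Lorenz-like, that is $\dim W^s(\sigma)=s+1$ and not $s$. Nothing in the bare definition of sectional hyperbolicity excludes the case $\dim W^s(\sigma)=s$ (in particular the domination inequality is still satisfied when $E^c_\sigma$ is entirely expanding), so excluding it genuinely uses that $\Lambda$ is an attractor, together with the existence and invariance of the strong stable foliation tangent to $E^s$ and its domination by $E^c$, which is the technical content collected in Section~\ref{Aux}. The remaining ingredients---existence of a periodic orbit in $\Lambda$, the computation of its stable index, and the reduction itself---are comparatively routine.
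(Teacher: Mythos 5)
Your reduction follows the paper's own route: derive Theorem~\ref{thm:main1} from Theorem~\ref{thm:main2} by exhibiting, in a non-hyperbolic transitive sectional-hyperbolic attractor, a singularity of stable index $\dim E^s+1$ and a hyperbolic periodic orbit of stable index $\dim E^s$, then invoking Theorem~\ref{thm:main2}. Your argument that any periodic orbit has stable index exactly $\dim E^s$ (domination plus sectional expansion of $E^c$, with the flow direction neutral, leaves no further contracting direction) matches the remark the paper makes just before stating Theorem~\ref{thm:main2}.

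There is, however, a gap in the two existence/rigidity claims you rely on, together with a mis-attribution. First, the existence of hyperbolic periodic orbits inside a nontrivial transitive sectional-hyperbolic attractor is itself a theorem; the paper appeals to L\'opez~\cite{Lop} for it, whereas you simply assert it. Second, and more substantially, the claim that every singularity of such an attractor has stable index exactly $\dim E^s+1$ (rather than the a priori possible $\dim E^s$) is precisely Arbieto--Morales~\cite[Proposition~10]{AM}, and the paper invokes it by citation. You correctly identify this as the main obstacle, but then assert that it is ``the technical content collected in Section~\ref{Aux}''. It is not: the lemmas of Section~\ref{Aux} concern stable/unstable manifolds of \emph{already hyperbolic} critical elements, the stable foliation $\mathcal{F}^s$, and the holonomy $\pi^s$; none of them determines the stable index of a singularity. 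Moreover your sketch takes as a premise that the $E^s$-tangent leaf through $\sigma$ meets $\Lambda$ only at $\{\sigma\}$, which is essentially equivalent to $\sigma$ being Lorenz-like and is part of what needs to be established, so the sketch is close to circular. Without either a self-contained proof of this index statement or an explicit appeal to~\cite{AM}, the reduction remains incomplete.
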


If $p$ is a hyperbolic periodic point (i.e. $T_{\mathcal O(p)} M$ admits an invariant splitting $F^s\oplus F^c\oplus F^u$ as above), then \emph{the strong-stable manifold}
\begin{align*}
W^{ss}(p) 
	=\left\{x\in M : \lim_{t\to +\infty} d(X_t(x),X_t(p))=0\right\} 
\end{align*}
is indeed a $C^1$-submanifold tangent to $F^{s}$ (see \cite{HPS}).
We define the \emph{stable manifold} as 
\begin{align*}
W^{s}(p)=\bigcup_{t\in \mathbb R} X_t(W^{ss}(p)),
\end{align*}
which is a $C^1$-submanifold tangent to $F^{s}\oplus F^c$.
Let $d^{ss}$ be the distance in $W^{ss}(p)$ induced by the Riemannian metric.
The \emph{local stable manifold} at $p$ is defined by $W_\vep^s(p)=\bigcup_{|t|\le \vep} X_t(W^{ss}_\vep(p))$ where 
\begin{align*}
W_{\vep}^{ss}(p) 
	=\left\{x\in W^{ss}(p) : d^{ss}(x,p)\le \vep \right\} 
\end{align*}
for $\vep >0$. 
Moreover, observe that for $\vep>0$ there exists $\vep_0>0$ such that
\begin{align*}
\bigcap_{T\ge 0} B_{T}(p,\vep_0) \subset W_\vep^{s}(p).
\end{align*}
where $B_T(p,\vep_0)=\{ x\in M \colon d(X_t(x), X_t(p)) \le \vep_0, 0\le t \le T\}$ and consequently,  $W_\vep^{s}(p)$ contains the intersection of dynamical balls computed only for future iterates (see Lemma \ref{stable}).	
Analogously, (local) strong-unstable and unstable manifolds $W^{uu}_\vep(p)$, $W^{uu}(p)$, $W_\vep^{u}(p)$ and $W^{u}(p)$ are defined with respect to $X_{-t}$.

When $p$ is a hyperbolic singularity, we define the \emph{stable manifold} by
\begin{align*}
W^{ss}(p) 
	=\left\{x\in M : \lim_{t\to +\infty} d(X_t(x),p)=0\right\}. 
\end{align*}
Then by the stable manifold theorem we have that $W^{ss}(p)$ is a $C^1$-submanifold tangent to $F^s$. Set 
\begin{align*}
W_{\vep}^{s}(p) 
	=\left\{x\in M : d(X_t(x),p)\le \vep ~(t\ge 0) \right\} ~(\vep>0),
\end{align*}
which is called the \emph{local stable manifold}.
Then there exists $\vep_0>0$ such that
\begin{align*}
W^{ss}(p) 
	=\bigcup_{t\ge 0} X_{-t}(W^s_{\vep}(p))~~(0< \vep < \vep_0).
\end{align*}
By the definition of the singularity we have that $X_t(W^{ss}(p))=W^{ss}(p)$ for $t\in {\mathbb R}$, and so if we put $W^{s}(p)=\bigcup_{t\in \mathbb R} X_t(W^{ss}(p))$, then $W^{ss}(p)=W^{s}(p)$.
Analogously, we define (local) unstable manifolds $W_\vep^{u}(p)$ and $W^{u}(p)$ with respect to $X_{-t}$.

Observe that, by the definition of sectional hyperbolicity, all singularities are hyperbolic 
and all periodic orbits $p$ have stable index $\dim W^{ss}(p)$ equal to $\dim E^s$. 
Recently, periodic orbits for sectional-hyperbolic attractors were constructed by Lopez~\cite{Lop}, and in \cite[Proposition 10]{AM} Arbieto and Morales showed that the stable indices $\dim W^{ss}(q)$ of singularities $q$ for every nontrivial transitive sectional-hyperbolic set are equal to $\dim E^s+1$.
Moreover, every sectional-hyperbolic flow without singularities is actually hyperbolic.
Hence, Theorem~\ref{thm:main1} is actually a consequence of the more general result:

 \begin{maintheorem}\label{thm:main2}
Let $X\in \mathfrak{X}^1(M)$ be a vector field and let $\Lambda$ be an attractor 
so that the flow $(X_t)_{t\in\mathbb R}$ admits a partially hyperbolic splitting $T_\Lambda M=E^s\oplus E^c$. Assume that there are two hyperbolic critical elements $p$ and $q$ such that 
${\rm dim}\;E^s={\rm dim}\;W^{ss}(p)<{\rm dim}\;W^{ss}(q)$.
Then $X\mid_\Lambda$ does not satisfy the weak specification property.
\end{maintheorem}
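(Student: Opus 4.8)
The plan is to argue by contradiction: assume $X|_\Lambda$ satisfies the weak specification property and derive a contradiction from the coexistence of the two critical elements $p$ and $q$ with $\dim E^s = \dim W^{ss}(p) < \dim W^{ss}(q)$. The guiding idea, inherited from the diffeomorphism case in \cite{SVY}, is that weak specification forces one to shadow, by a single orbit, a long stretch of time spent near $p$ followed by a long stretch near $q$; the transition orbit must therefore enter the local stable manifold $W^s_\vep(p)$ (because of the first stretch) while being arbitrarily close to a piece of $\mathcal O(q)$ (because of the second stretch). Since $\Lambda$ is an attractor with partially hyperbolic splitting $T_\Lambda M = E^s\oplus E^c$, and since $\dim W^{ss}(p)=\dim E^s$, the local stable set of $p$ behaves, infinitesimally, like the $E^s$-direction; on the other hand the larger stable index of $q$ means there is an extra contracting direction near $q$ living inside $E^c$. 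The contradiction will come from a dimension/transversality count: iterating forward, the shadowing orbit is squeezed onto something tangent to $E^s$ near $q$, but the dynamics near $q$ contracts a subspace strictly larger than $E^s$, and these two facts cannot be reconciled with $q\in\Lambda$ being approximated by genuine $\Lambda$-orbits that stay $\vep$-close to $\mathcal O(q)$.

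\textbf{Key steps.} First, I would use the auxiliary lemmas from Section~\ref{Aux} (in particular the statement, already quoted in the excerpt, that $\bigcap_{T\ge0}B_T(p,\vep_0)\subset W^s_\vep(p)$, i.e.\ Lemma~\ref{stable}) to translate the specification shadowing into a geometric statement: given $\vep>0$ small, choose the gap time $T(\vep)$ from the weak specification property, and for arbitrarily large $\tau$ build the two-piece specification consisting of a length-$\tau$ orbit segment of $p$ (or of $\mathcal O(p)$ if $p$ is periodic) on $I_1=[0,\tau]$ and a length-$\tau$ orbit segment of $q$ on $I_2=[\tau+T(\vep),2\tau+T(\vep)]$. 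Let $x=x_\tau\in\Lambda$ be the shadowing point. The first piece forces $x\in B_\tau(p,\vep)$, hence, letting $\tau\to\infty$ along a subsequence, any accumulation point $x_\infty$ of the $x_\tau$ lies in $\bigcap_{T\ge0}B_T(p,\vep)\subset W^s_\vep(p)$; in particular the forward orbit of $x_\infty$ converges to $\mathcal O(p)$ (or to $p$). Second, the second piece forces $X_{\tau+T(\vep)}(x_\tau)$ to be within $\vep$ of $\mathcal O(q)$; one then has to run the argument with a compactness/limiting procedure to produce, near $q$, a point $z\in\Lambda$ whose \emph{backward} orbit stays in a small neighborhood of $\mathcal O(p)\cup\mathcal O(q)$ and which lies in (a translate of) $W^s_\vep(p)$ — the standard way to do this is to re-center the picture at time $\tau+T(\vep)$ and extract a limit, using that $\Lambda$ is closed and invariant. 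Third, and this is where partial hyperbolicity enters decisively: the set of points whose forward orbit is trapped near $\mathcal O(p)$ is contained in a submanifold tangent to $E^s$ along $\mathcal O(p)$ (since $\dim W^{ss}(p)=\dim E^s$ and $W^s(p)$ is tangent to $E^s\oplus E^c$, but the "extra" $E^c$ direction there is just the flow direction), whereas near $q$ the contracting subspace $T W^{ss}(q)$ has dimension $\dim E^s+1 > \dim E^s$ and is \emph{not} contained in the flow-extended $E^s$; dominated splitting then yields an inclusion/transversality contradiction when one tries to fit the limiting object $z$ simultaneously into both descriptions. A clean way to package the third step is via the domination inequality $\|D_xX_t|E^s_x\|\,\|D_{X_t(x)}X_{-t}|E^c_{X_t(x)}\|\le C\lambda^t$: it shows $E^s$ is the \emph{unique} subbundle contracted at a definite rate relative to $E^c$, so a point with $\dim E^s + 1$ contracting directions that also lies asymptotically in an $E^s$-tangent object near $q$ cannot exist in $\Lambda$.

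\textbf{Main obstacle.} The delicate part is making the limiting/compactness argument in the second and third steps rigorous, because singularities are allowed: if $q$ (or $p$) is a singularity, the "orbit segment of $q$" is just the constant point $q$, the local stable manifold of a singularity is handled by a slightly different statement (as the excerpt already notes, $W^{ss}(p)=W^s(p)$ for singularities), and the re-centering at time $\tau + T(\vep)$ must be done carefully so that the extracted limit point genuinely inherits both the "trapped near $p$ in the past" and the "$\vep$-close to $q$" properties without collapsing. One must also ensure the partially hyperbolic estimates, which are only assumed on $\Lambda$, survive passage to limits — this is fine since $E^s\oplus E^c$ is continuous on the compact set $\Lambda$, but it has to be invoked explicitly. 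I expect that the bulk of the technical work is in Step~2: producing the single limiting point $z\in\Lambda$ that is simultaneously forward-asymptotic to $\mathcal O(p)$ and sitting in the local picture near $\mathcal O(q)$, from which the dimension count in Step~3 delivers the contradiction more or less formally. The role of $\Lambda$ being an \emph{attractor} (rather than a general invariant set) is precisely to guarantee that these limit points stay in $\Lambda$ and that $W^s_\vep(p)$ meets $\Lambda$ in a controlled way, so I would keep track of the trapping neighborhood $U$ throughout.
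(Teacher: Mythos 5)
There is a genuine gap in your Step 3, and it is not a mere technicality. Producing a limit point $z\in\Lambda$ that is close to $\mathcal O(q)$, forward-asymptotic to $\mathcal O(q)$ (hence in $W^s_\vep(q)$, not $W^s_\vep(p)$ — your sketch seems to mix these up), and backward-asymptotic to $\mathcal O(p)$ does not by itself contradict anything: such heteroclinic points are expected to exist on a transitive attractor (e.g.\ $W^u(\mathcal O(p))\cap W^s(\mathcal O(q))\neq\emptyset$ is generic and is in fact forced by the density of strong-stable/unstable manifolds), and nothing in the domination inequality $\|D_xX_t|E^s_x\|\,\|D_{X_t(x)}X_{-t}|E^c_{X_t(x)}\|\le C\lambda^t$ prevents a single point from lying in both objects. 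Dominated splitting controls angles between invariant bundles, not intersections of stable and unstable laminations of distinct saddles, so the ``inclusion/transversality contradiction'' you invoke does not materialize from domination alone. This is precisely why your Step~2, however carefully executed, cannot feed into a contradiction: the contradiction the theorem requires is not about one orbit.

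The actual argument works with a whole family. Fix a disk $D_0\subset W^u(p)$ transverse to the strong stable foliation $\{\mathcal F^s(z)\}_{z\in\Lambda}$, and build the product box $\mathcal A(D_0)=\bigcup_{z\in D_0}\mathcal F^s_\mu(z)$ homeomorphic to $D_0\times[-\mu,\mu]^{\dim E^s}$ (Lemma~\ref{holonomy}); let $\pi^s\colon\mathcal A(D_0)\to D_0$ be the stable holonomy. The decisive point is Lemma~\ref{finite}: because $\dim W^u(q)<\dim W^u(p)=\dim D_0$ (this is where the index gap $\dim W^{ss}(p)<\dim W^{ss}(q)$ enters, translated to unstable dimensions), the projection $\pi^s\bigl(X_T(W^u_\mu(q))\bigr)$ is contained in a finite union of disks of dimension strictly less than $\dim D_0$, hence misses an open disk $U\subset D_0$. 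Then $\mathcal A(U)\cap X_T(W^u_\mu(q))=\emptyset$. The density of $W^{ss}(p)$ in $\Lambda$ (Lemma~\ref{dense} — which itself already uses specification) gives a point $x\in W^{ss}(p)$ whose local stable disk $\gamma^s_{L\vep}(x)$ sits inside $\mathcal A(U)$, so $X_{-T}(\gamma^s_{L\vep}(x))\cap W^u_\mu(q)=\emptyset$. Finally a single two-piece specification (backward shadow $\mathcal O(q)$, forward shadow the orbit of $x$, with the fixed gap $T=T(\vep)$) combined with Proposition~\ref{hairu2} and its unstable analogue forces the shadowing point into $X_{-T}(\gamma^s_{L\vep}(x))\cap W^u_{L\vep}(q)$, the desired contradiction. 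None of this holonomy/projection machinery, nor the two distinct uses of specification, appears in your sketch; the proposal as written would not close.
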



Now we briefly describe the geometric Lorenz attractor.
Let $\Sigma=\{(x,y,1)\in\mathbb{R}^3:|x|,|y|\le 1\}$ and $\Gamma=\{(0,y,1)\in\mathbb{R}^3:|y|\le 1\}$.
A $C^1$-vector field $X$ on ${\mathbb R}^3$ is said to be a {\it geometric Lorenz vector field} if it satisfies the following conditions
\begin{enumerate}
\item[(1)] For any point $(x,y,z)$ in a neighborhood of the origin ${\bf 0}$ of ${\mathbb R}^3$, $X$ is given by $(\dot{x}, \dot{y}, \dot{z})=(\lambda_1 x, -\lambda_2 y, -\lambda_3 z)$ where $0<\lambda_3<\lambda_1<\lambda_2$.
\item[(2)] All forward orbits of $X$ starting from $\Sigma\setminus \Gamma$ will return to $\Sigma$ and the first return map $L:\Sigma \setminus \Gamma\to \Sigma$ 
is a piecewise $C^1$ diffeomorphism which has the form 
\begin{equation*}
L(x,y,1)=(\alpha (x), \beta (x,y),1),
\end{equation*}
where $\alpha:[-1,1]\setminus \{ 0\} \to [-1,1]$ is a piecewise $C^1$-map with $\alpha(-x)=-\alpha(x)$ and satisfying
\begin{equation*}
\left\{ 
\begin{array}{ll}
\lim_{x\to 0+} \alpha(x)=-1, &\alpha (1)<1,\\
\lim_{x\to 0+} \alpha'(x)=\infty, &\alpha' (x)>\sqrt{2} \text{ for any } x\in (0,1].
\end{array}
\right.
\end{equation*}
\end{enumerate}
A $C^1$-map $X_t:{\mathbb R}^3 \times {\mathbb R} \to {\mathbb R}^3$ is the geometric Lorenz flow if it is generated by a geometric Lorenz vector field $X$ (see e.g. \cite{Gu, GuWi, KS} for more details).
Let $T_X$ be the closure of the set $\displaystyle\bigcup_{t\ge 0}X_t(\Sigma\setminus\Gamma)$ in
$\mathbb{R}^3$ and
set $\displaystyle\Lambda=\bigcap_{t\ge 0}X_t(T_X)$. Then it is known that
$\Lambda$ is a partially hyperbolic attractor (see \cite{AP} for details).
We call $\Lambda$ the \textit{geometric Lorenz attractor}.

The stable index of the singularity $q$ of the geometric Lorenz attractors satisfies $\dim W^{ss}(q)= \dim E^s+1$. 
Hence, we obtain the following immediate consequence:

\begin{maincorollary}
\label{Lorenz}
Assume that $(X_t)_{t\in \mathbb R}$ is a flow on ${\mathbb R}^3$ that admits a geometric Lorenz attractor $\Lambda$. 
Then $(X_t)_{t\in \mathbb R}$ does not satisfy the weak specification property on $\Lambda$.
\end{maincorollary}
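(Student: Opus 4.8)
The plan is to obtain Corollary~\ref{Lorenz} as a direct application of Theorem~\ref{thm:main2}: it suffices to exhibit, inside the geometric Lorenz attractor $\Lambda$, two hyperbolic critical elements $p$ and $q$ with $\dim E^s=\dim W^{ss}(p)<\dim W^{ss}(q)$. First I would recall that, by \cite{AP}, $\Lambda$ is a partially hyperbolic attractor with splitting $T_\Lambda M=E^s\oplus E^c$, where $\dim E^s=1$ and $\dim E^c=2$. For the critical element $q$ I would take the singularity at the origin $\mathbf 0$: by condition~(1) in the definition of a geometric Lorenz vector field, in a neighborhood of $\mathbf 0$ the flow is linear with eigenvalues $\lambda_1>0$ and $-\lambda_2<-\lambda_3<0$, so $\mathbf 0$ is a hyperbolic singularity, it belongs to $\Lambda$ (forward orbits issued near $\Gamma$ accumulate on it), and $\dim W^{ss}(\mathbf 0)=2=\dim E^s+1$, as already recorded just before the statement of the corollary.

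For the critical element $p$ I would produce a hyperbolic periodic orbit contained in $\Lambda$ with $\dim W^{ss}(p)=1$. The one-dimensional Lorenz map $\alpha\colon[-1,1]\setminus\{0\}\to[-1,1]$ is uniformly expanding because $\alpha'(x)>\sqrt 2$; hence it has periodic points, and one may select a periodic point whose whole $\alpha$-orbit stays in a compact subset of $(-1,1)\setminus\{0\}$, thereby avoiding $\Gamma$ together with its preimages. Since the first return map $L(x,y,1)=(\alpha(x),\beta(x,y),1)$ uniformly contracts the $y$-direction and expands the $x$-direction, the appropriate composition of branches of $L$ has a hyperbolic fixed point projecting to that periodic point; its suspension is a hyperbolic periodic orbit $\mathcal O(p)\subset\Lambda$ of $X$ whose one-dimensional strong-stable direction is $E^s$, so that $\dim W^{ss}(p)=1=\dim E^s$. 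Equivalently, one may invoke the existence of periodic orbits in sectional-hyperbolic attractors from \cite{Lop} together with the fact, noted before Theorem~\ref{thm:main2}, that any such periodic orbit has stable index equal to $\dim E^s$.

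With $p$ and $q=\mathbf 0$ chosen as above, $p$ and $q$ are distinct hyperbolic critical elements of the attractor $\Lambda$ and
\[
\dim E^s=\dim W^{ss}(p)=1<2=\dim W^{ss}(q),
\]
so Theorem~\ref{thm:main2} applies and yields that $X\mid_\Lambda$ does not satisfy the weak specification property, which is precisely the assertion of the corollary. The only step here that is not pure bookkeeping is the construction of the hyperbolic periodic orbit $p$ with the correct stable index, sitting inside $\Lambda$ and disjoint from the singularity; once this (classical) fact is granted, everything reduces to checking the numerical inequality and quoting Theorem~\ref{thm:main2}.
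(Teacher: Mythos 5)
Your proposal is correct and follows the same route the paper takes: the corollary is deduced directly from Theorem~\ref{thm:main2} by observing that the geometric Lorenz attractor is partially hyperbolic with $\dim E^s=1$, that the singularity $\mathbf 0$ lies in $\Lambda$ with $\dim W^{ss}(\mathbf 0)=2=\dim E^s+1$, and that $\Lambda$ contains a hyperbolic periodic orbit of stable index $1=\dim E^s$. The only difference is that the paper treats the existence of such a periodic orbit as known (pointing to \cite{Lop} and the paragraph preceding the corollary), whereas you additionally sketch the classical construction of one from a periodic point of the one-dimensional expanding Lorenz map $\alpha$ lifted through the return map $L$; this is a welcome clarification but not a different argument.
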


\begin{remark}
Even though Theorem \ref{thm:main2} is proved for compact manifolds, it applies to geometric Lorenz attractors because they can be viewed as the restriction of flows on a 3-sphere. 
For that reason, compact Riemannian manifolds of dimension larger or equal to 3 admit vector fields that exhibit 
geometric Lorenz attractors (see e.g. Subsection 3.3 in \cite{AP}).
\end{remark}

We notice that if $\dim M=3$ then every $C^1$-robustly transitive set with singularities 
$\Lambda$ is a singular-hyperbolic set up to flow-reversing~\cite{MPP} and consequently, the flow $(X_t)_{t\in \mathbb R}$ does not satisfy the specification property on $\Lambda$.
Observe that the previous theorem also applies to partially hyperbolic sets $\Lambda$ with a 
decomposition $E^u\oplus E^{c}$ just by considering the vector field $-X$.  
Moreover, even in the case of an Anosov flow $(X_t)_{t\in \mathbb R}$ the time-1 map $f=X_1: M \to M$ 
of an Anosov flow is a strongly partially hyperbolic diffeomorphism that admits no hyperbolic periodic points. In
particular an analogous theorem as the previous one for flows does not follow from the ones obtained
for partially hyperbolic diffeomorphisms in \cite{SVY}. 
Nevertheless some corollaries of the main result in \cite{SVY} for strongly partially hyperbolic diffeomorphisms
on three-manifolds can be expected to hold for strongly partially hyperbolic flows
on four-manifolds due to the neutral direction of the vector field. We shall discuss now such extensions.

We say that a flow is \emph{strongly partially hyperbolic} with $d$-dimensional central direction ($d\ge 1$) if 
there are a continuous invariant splitting $TM=E^s\oplus E^c \oplus E^u$ with $\dim E^c =d$, constants $C>0$ and $\lambda\in (0,1)$
so that
\begin{eqnarray*}
&&\|D_xX_t|E_x^s\| \le C\lambda^t,~~\|(D_xX_t|E_x^u)^{-1}\|\le C\lambda^t,\\
&&\|D_xX_t|E_x^s\|~\|D_{X_t(x)}X_{-t}|E^c_{X_t(x)}\|\le C\lambda^t \text{ and } \\
&&\|D_xX_t|E_x^c\|~\|D_{X_t(x)}X_{-t}|E^u_{X_t(x)}\|\le C\lambda^t 
\end{eqnarray*}
for every $x\in M$ and $t\ge 0$.
Denote by $\mathcal{SPHF}_d(M)$ the set of such flows and note that it is an open subset of 
$\mathfrak{X}^1(M)$.
We say a flow $(X_t)_{t\in \mathbb R}$ generated by a vector field $X$ is \textit{robustly transitive} if all flows generated by vector fields in a $C^1$-open neighborhood of $X$ are transitive, that is, have a dense orbit. If the vector field $X$ has an attractor
$\Lambda_X:=\bigcap_{t\ge 0} X_t(U)$ we say that $\Lambda$ is a \emph{robustly transitive attractor} if  
for any vector field $Y$ in a $C^1$-open neighborhood of $X$ the attractor $\Lambda_Y:=\bigcap_{t\ge 0} Y_t(U)$
is transitive.
Finally, we denote by $\mathcal{RNTF}$ the set of robustly non-hyperbolic transitive flows
(that is, flows generated by vector fields $X$ so that every $C^1$-vector field $Y$ in a
$C^1$-neighborhood of $X$ generates a non-hyperbolic and transitive flow) endowed with 
the $C^1$-topology in the space of vector fields.


 %
In the case that the central direction $E^c$ is two dimensional, any two hyperbolic periodic points 
with different indices verify the assumptions of Theorem~\ref{thm:main2}. Thus
we obtain the following direct consequence.

\begin{maincorollary}\label{cor1}
Let $X\in \mathcal{SPHF}_2(M)$ and suppose that there exist two  hyperbolic critical elements
with different indices. Then $X$ does not satisfy the weak specification property.
\end{maincorollary}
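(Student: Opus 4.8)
The plan is to deduce the corollary from Theorem~\ref{thm:main2} by coarsening the strong partially hyperbolic splitting and by using the manifold itself as the attractor. I would argue by contradiction, assuming that $X$ satisfies the weak specification property on $M$. By \cite[Lemma~3.1]{AST} the set $M$ is then topologically mixing, hence transitive; taking $U=M$ in the definition of an attractor one checks at once that $M=\bigcap_{t\ge0}X_t(M)$ is (trivially) an attractor for $(X_t)_{t\in\mathbb R}$, and likewise for $(-X_t)_{t\in\mathbb R}$ since topological mixing is time-symmetric.

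Next I would produce a partially hyperbolic splitting in the sense of Section~\ref{Stat}. Writing $TM=E^s\oplus E^c\oplus E^u$ for the given strong partially hyperbolic splitting, set $\widehat E^s:=E^s$ and $\widehat E^c:=E^c\oplus E^u$. The bound $\|D_xX_t|\widehat E^s_x\|\le C\lambda^t$ is one of the defining inequalities of $\mathcal{SPHF}_2(M)$. For the domination inequality I would estimate $\|D_{X_t(x)}X_{-t}|\widehat E^c_{X_t(x)}\|$ by a fixed multiple of $\max(\|D_{X_t(x)}X_{-t}|E^c_{X_t(x)}\|,\|(D_xX_t|E^u_x)^{-1}\|)$ — the multiplicative constant coming from the uniform lower bound, over the compact manifold, on the angle between $E^c$ and $E^u$ — and then multiply by $\|D_xX_t|\widehat E^s_x\|$ and use $\|D_xX_t|E^s_x\|\le C\lambda^t$ together with the remaining two defining inequalities of $\mathcal{SPHF}_2(M)$ to conclude that $\|D_xX_t|\widehat E^s_x\|\,\|D_{X_t(x)}X_{-t}|\widehat E^c_{X_t(x)}\|\le C'\lambda^t$. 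Hence $T_MM=\widehat E^s\oplus\widehat E^c$ is a partially hyperbolic splitting with $\dim\widehat E^s=\dim E^s$.

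Third comes the index bookkeeping. For any hyperbolic critical element $r$, a standard comparison of the two invariant dominated splittings along $\mathcal O(r)$ forces $E^s|_{\mathcal O(r)}\subseteq F^s(r)$ and $E^u|_{\mathcal O(r)}\subseteq F^u(r)$. Since $\dim E^c=2$ this yields $\dim E^s\le\dim W^{ss}(r)\le\dim E^s+2$, with $\dim W^{ss}(r)\in\{\dim E^s,\dim E^s+1\}$ when $r$ is periodic (one central direction being spanned by $X$). Combining this with the relation $\dim W^{ss}(r)+\dim W^{uu}(r)=\dim M-\varepsilon_r=\dim E^s+\dim E^u+2-\varepsilon_r$, where $\varepsilon_r=1$ if $r$ is periodic and $\varepsilon_r=0$ if $r$ is a singularity, and with the symmetric statements for $-X$ (for which $W^{ss}$ and $W^{uu}$ are interchanged and the coarse stable bundle becomes $E^u$), I would check that, after possibly replacing $X$ by $-X$ and relabelling $p$ and $q$, one may assume $\dim\widehat E^s=\dim W^{ss}(p)<\dim W^{ss}(q)$; the observation made shortly after Theorem~\ref{thm:main2}, that the theorem applies equally to decompositions of the form $E^u\oplus E^c$ by passing to $-X$, ensures that the conclusion is unaffected by this replacement.

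With these hypotheses in place, Theorem~\ref{thm:main2}, applied to the attractor $M$, the splitting $T_MM=\widehat E^s\oplus\widehat E^c$ and the critical elements $p,q$, shows that $X|_M=X$ does not satisfy the weak specification property, contradicting the assumption; the corollary follows. The step I expect to be delicate is the index bookkeeping of the third paragraph — in particular, making sure that when one of $p,q$ is a singularity the $\pm X$ reduction still yields a critical element whose strong-stable dimension equals the dimension of the coarse stable bundle — whereas the coarsened domination estimate is routine.
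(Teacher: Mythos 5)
Your approach is essentially the paper's: coarsen the strong partially hyperbolic splitting to $E^s$ and $E^c\oplus E^u$, take $\Lambda=M$ (legitimate once weak specification is assumed, since topological mixing gives transitivity), and feed Theorem~\ref{thm:main2} either $X$ or $-X$ together with a suitable relabelling of $p,q$. The paper's proof suppresses the preliminary steps you spell out and jumps directly to a three-way case analysis according to whether $p,q$ are periodic orbits or singularities, but the underlying strategy is identical.

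Your instinct that the index bookkeeping is the delicate step is correct, and you have in fact isolated the dangerous configuration. Set $s=\dim E^s$ and $u=\dim E^u$. If $p$ is periodic with $\dim W^{ss}(p)=s+1$ and $q$ is a singularity with $\dim W^{ss}(q)=s+2$, then the stable indices differ, yet $\dim W^{uu}(p)=u=\dim W^{uu}(q)$, so neither $X$ nor $-X$ yields a pair of critical elements with $\dim W^{ss}(p')=\dim E^s<\dim W^{ss}(q')$ as Theorem~\ref{thm:main2} requires. The paper's treatment of case~(iii) handles ${\rm ind}^s(p)=2$ with the single line ``then ${\rm ind}^u(p)=1$ and consider $-X$'' without checking the singularity's index, so this sub-case is silently skipped there too. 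You are no worse off than the paper, since you explicitly flagged this as the step needing care; but to close the argument one would need either an additional device for this mixed configuration or a hypothesis excluding it, such as the nonsingularity assumption the paper adds in Corollary~\ref{c3}.
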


Since $X(x)$ is in the central direction $E^c$ for a nonsingular partially hyperbolic flow $(X_t)_{t\in \mathbb R}$, we can obtain the following corollary in a similar way as above.

\begin{maincorollary}\label{c3}
Let $X\in \mathcal{SPHF}_3(M)$. If $X$ is nonsingular and if there exist two  hyperbolic critical elements with different indices, then $X$ does not satisfy the weak specification property.
\end{maincorollary}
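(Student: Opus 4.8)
The plan is to reduce Corollary~\ref{c3} to Theorem~\ref{thm:main2}, applied either to $X$ or to the reversed vector field $-X$, after merging the central subbundle with one of the two hyperbolic subbundles. Suppose, for a contradiction, that $X\in\mathcal{SPHF}_3(M)$ is nonsingular, admits two hyperbolic critical elements $p$ and $q$ with $\dim W^{ss}(p)\neq\dim W^{ss}(q)$, and satisfies the weak specification property on $M$. By \cite[Lemma~3.1]{AST} the flow $(X_t)_{t\in\mathbb R}$ is then topologically mixing, hence transitive, so $\Lambda:=M$ is an attractor (with $U=M$). Since $X$ is nonsingular, $p$ and $q$ are periodic orbits, and $X(x)\in E^c_x$ for every $x\in M$: indeed $t\mapsto\|D_xX_t(X(x))\|=\|X(X_t(x))\|$ is bounded away from $0$ and $+\infty$ by compactness, so $X(x)$ can have no component along the uniformly contracting bundle $E^s$ (it would grow under $X_{-t}$) nor along the uniformly expanding bundle $E^u$ (it would grow under $X_t$).

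The next step is an elementary index count. For $r\in\{p,q\}$ let $T_{\mathcal{O}(r)}M=F^s\oplus F^c\oplus F^u$ be the hyperbolic splitting of the periodic orbit, so $F^c=\langle X\rangle$ and $\dim W^{ss}(r)=\dim F^s$, $\dim W^{uu}(r)=\dim F^u$. Uniform contraction of $E^s$ and expansion of $E^u$ along $\mathcal{O}(r)$ give $E^s\subset F^s$ and $E^u\subset F^u$, while $F^c=\langle X\rangle\subset E^c$ by the previous paragraph; since $\dim M=\dim E^s+3+\dim E^u=\dim F^s+1+\dim F^u$, there is a unique $k_r\in\{0,1,2\}$ with
\[
\dim W^{ss}(r)=\dim E^s+k_r,\qquad \dim W^{uu}(r)=\dim E^u+(2-k_r).
\]
After relabelling we may assume $k_p<k_q$.

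Now I would argue by cases. If $k_p=0$, then $\dim W^{ss}(p)=\dim E^s$; since $E^s$ is uniformly contracting and, by the strong partial hyperbolicity estimates together with the uniform lower bound on the angle between the invariant subbundles, the product $\|D_xX_t|E^s_x\|\,\|D_{X_t(x)}X_{-t}|(E^c\oplus E^u)_{X_t(x)}\|$ decays exponentially in $t$, the splitting $T_\Lambda M=E^s\oplus(E^c\oplus E^u)$ is partially hyperbolic in the sense of Theorem~\ref{thm:main2}. Applying Theorem~\ref{thm:main2} to $X$, $\Lambda=M$, and the critical elements $p$, $q$ (which satisfy $\dim E^s=\dim W^{ss}(p)<\dim W^{ss}(q)$) shows that $X\mid_M$ does not have the weak specification property, a contradiction. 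If $k_p\geq 1$, then necessarily $k_p=1$ and $k_q=2$, hence $\dim W^{uu}(q)=\dim E^u$. Here I would pass to $-X$: it is nonsingular, it again belongs to $\mathcal{SPHF}_3(M)$ with $E^s$ and $E^u$ interchanged, its weak specification property on $M$ holds (this and the preservation of $\mathcal{SPHF}_3(M)$ under time reversal being routine), and for $-X$ the strong-stable manifold of a periodic orbit is its strong-unstable manifold for $X$. As already noted, Theorem~\ref{thm:main2} applies to $-X$ with the decomposition $E^u\oplus(E^c\oplus E^s)$; taking the critical elements $q$, $p$, for which $\dim E^u=\dim W^{uu}(q)<\dim W^{uu}(p)$, we again obtain a contradiction. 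In both cases $X$ cannot satisfy the weak specification property, which proves the corollary.

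The main point---and essentially the only step that is more than a routine verification---is the dimension bookkeeping of the second paragraph, which decides which of $E^s$, $E^u$ should be retained as the ``stable'' bundle of Theorem~\ref{thm:main2}: in contrast with Corollary~\ref{cor1}, where $\dim E^c=2$ forces one of the two periodic orbits to have stable index exactly $\dim E^s$, here one may be obliged to pass to the reversed flow. Checking that $\mathcal{SPHF}_3(M)$ and the weak specification property are invariant under time reversal, and that adjoining $E^u$ (respectively $E^s$) to $E^c$ produces a splitting satisfying the hypotheses of Theorem~\ref{thm:main2}, involves only standard estimates.
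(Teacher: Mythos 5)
Your proof is correct and follows the same route the paper has in mind: the paper disposes of Corollary~\ref{c3} in one line by saying it is ``similar'' to Corollary~\ref{cor1} once one notes $X(x)\in E^c_x$, and your argument is precisely that reduction carried out in detail, with the bookkeeping parameter $k_r=\dim W^{ss}(r)-\dim E^s\in\{0,1,2\}$ playing the role of the case split on ${\rm ind}^s(p)\in\{1,2\}$ in the proof of Corollary~\ref{cor1}. The routine verifications you mention (that $E^s\oplus(E^c\oplus E^u)$, resp.\ $E^u\oplus(E^c\oplus E^s)$, is a partially hyperbolic splitting, and that $\mathcal{SPHF}_3(M)$ and weak specification are preserved under time reversal) are exactly what the paper leaves implicit, and your sketches of them are sound.
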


Using $C^1$-perturbative techniques one can show that hyperbolic flows coincide with 
the class star-flows $\mathcal {G}^1(M)$ 
(i.e. flows such that all critical elements are hyperbolic   
$C^1$-robustly) (see e.g. \cite{AST} for a more precise description). We deduce that, from the topological 
viewpoint, most robustly non-hyperbolic and transitive partially hyperbolic flows with three dimensional central direction do not have the specification property. More precisely,

\begin{maincorollary}\label{nh}
There is a $C^1$-open and dense subset $O$ in $\mathcal{RNTF}\cap\mathcal{SPHF}_3(M)$, such 
that every $X\in O$ does not satisfy the weak specification property.
\end{maincorollary}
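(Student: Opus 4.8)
The plan is to combine Corollary~\ref{c3} with standard $C^1$-perturbative results for flows. The strategy is to produce, on a $C^1$-open and dense subset of $\mathcal{RNTF}\cap\mathcal{SPHF}_3(M)$, two hyperbolic critical elements with different indices and then invoke Corollary~\ref{c3}, after first disposing of singularities.

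\medskip\noindent\textbf{Step 1: Reduce to the nonsingular case.} First I would argue that a flow $X\in\mathcal{RNTF}\cap\mathcal{SPHF}_3(M)$ has no singularities. A singularity $\sigma$ lies in the strongly partially hyperbolic splitting $TM=E^s\oplus E^c\oplus E^u$; since $X(\sigma)=0$, the central direction at $\sigma$ cannot be tangent to the flow and the whole splitting is a genuine (non-dominated-by-the-flow) dominated splitting there. If $X$ is robustly transitive, such a singularity would be accumulated by regular recurrent orbits; using the domination $E^s\oplus E^c\oplus E^u$ and that $\dim E^c=3$, a Hartman--Grobman analysis at $\sigma$ forces an index mismatch between the singularity and nearby periodic orbits produced by transitivity, so either we are already done by Theorem~\ref{thm:main2} on a neighborhood of $X$, or robust transitivity is violated. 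In either case, on an open dense subset we may assume $X$ is nonsingular; alternatively, one invokes the known fact (see the star-flow theory referenced via \cite{AST}) that strongly partially hyperbolic robustly transitive flows with a three-dimensional center that contain singularities already fall under Theorem~\ref{thm:main2}.

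\medskip\noindent\textbf{Step 2: Produce two indices via robust non-hyperbolicity.} For a nonsingular $X\in\mathcal{RNTF}\cap\mathcal{SPHF}_3(M)$, I would use the characterization that hyperbolic flows coincide with the star-flows $\mathcal{G}^1(M)$, stated in the excerpt: since $X$ is robustly non-hyperbolic, $X\notin\mathcal{G}^1(M)$, hence $X$ can be $C^1$-approximated by a vector field $Y$ having a non-hyperbolic critical element. As $Y$ is still nonsingular near $X$ (Step 1), this is a non-hyperbolic periodic orbit $\gamma_Y$; by a further arbitrarily small $C^1$ perturbation (a Franks-type lemma along the periodic orbit, as in the diffeomorphism case used in \cite{SVY,SSY}) one turns $\gamma_Y$ into a hyperbolic periodic orbit whose index is strictly larger than $\dim E^s$ (equivalently strictly smaller than $\dim E^s+\dim E^c$, using $\dim E^c=3\ge 2$ so a real or complex central eigenvalue can be pushed across the unit circle in either direction). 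On the other hand, transitivity together with strong partial hyperbolicity guarantees a hyperbolic periodic orbit of stable index exactly $\dim E^s$ (the "boundary" index): by shadowing/closing inside the strong stable-unstable structure one produces a periodic orbit whose orbit segment spends most of its time near a generic recurrent point, and uniform contraction on $E^s$ plus uniform expansion on $E^u$ forces its stable index to be exactly $\dim E^s$ (its central exponents being close to the Lyapunov exponents of the transitive measure, which are nonzero only in a controlled way; if they were all negative or all positive one contradicts transitivity/absence of sinks and sources). Hence, $C^1$-close to any such $X$ there is a vector field with two hyperbolic critical elements of different indices.

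\medskip\noindent\textbf{Step 3: Open up the property and conclude.} The set $\mathcal{V}$ of vector fields in $\mathcal{SPHF}_3(M)$ possessing two hyperbolic critical elements with different indices is $C^1$-open, because hyperbolic critical elements persist under $C^1$ perturbations and the indices are locally constant. By Step 2, $\mathcal{V}$ is $C^1$-dense in $\mathcal{RNTF}\cap\mathcal{SPHF}_3(M)$. Set $O=\mathcal{V}\cap\mathcal{RNTF}\cap\mathcal{SPHF}_3(M)$, which is then $C^1$-open and dense in $\mathcal{RNTF}\cap\mathcal{SPHF}_3(M)$. For every $X\in O$, $X$ is nonsingular (Step 1) and has two hyperbolic critical elements with different indices, so Corollary~\ref{c3} applies and $X$ does not satisfy the weak specification property. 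This proves the statement.

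\medskip\noindent\textbf{Main obstacle.} I expect the genuinely delicate point to be Step 2: rigorously producing, by an arbitrarily small $C^1$ perturbation, a hyperbolic periodic orbit of the "wrong" index while keeping the orbit of the "boundary" index $\dim E^s$, and making sure both survive in a common open set. This requires a Franks/Bonatti--Gan--Wen-type perturbation lemma adapted to flows along periodic orbits (controlling the linear Poincar\'e flow inside $E^c$) together with an argument that transitivity already forces a periodic orbit realizing the extremal index $\dim E^s$; the nonsingularity reduction in Step 1 also needs care since it uses the interaction between robust transitivity and the Hartman--Grobman picture at a hyperbolic singularity, and one must ensure no singularities are created by the perturbations in Step 2 (immediate, as they are small and the original flow is nonsingular with the property being open).
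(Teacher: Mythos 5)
Your overall plan (discard singularities, find two hyperbolic periodic orbits with different indices, invoke Corollary~\ref{c3}, and observe the property is $C^1$-open) matches the paper's. But the two key steps as you have written them contain genuine gaps. In Step 1 the Hartman--Grobman ``index mismatch'' sketch does not work, and the ``alternatively'' you offer is not a known fact as stated; what is actually needed and what the paper uses is Vivier's theorem \cite{Vi03} that a $C^1$-robustly transitive vector field on a compact manifold has no singularities. This must come first, because it is exactly the nonsingularity that lets one combine Gan--Wen \cite{GW06} (nonsingular star flows are Axiom~A without cycles) with robust non-hyperbolicity to conclude $X\notin\mathcal{G}^1(M)$; your appeal to ``hyperbolic flows coincide with $\mathcal{G}^1(M)$'' without the nonsingular caveat glosses over exactly the singular case that Vivier rules out.

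In Step 2, the sub-argument producing a hyperbolic periodic orbit of stable index exactly $\dim E^s$ is both unnecessary and incorrect as written: for a strongly partially hyperbolic flow no periodic orbit can be a sink or a source regardless of its central exponents (because $E^s$ contracts and $E^u$ expands), so ``all central exponents negative or all positive'' yields no contradiction with transitivity; and in any case Corollary~\ref{c3} only asks for \emph{two different} indices, not for one equal to $\dim E^s$. The paper sidesteps this entirely by quoting the proof of \cite[Theorem~4.3]{AST}: starting from the non-hyperbolic periodic orbit of $Y$, a Franks-type perturbation produces a nearby vector field $Z$ with two hyperbolic periodic orbits of different indices, and that configuration is $C^1$-open. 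If you replace your Step~1 by the citation to \cite{Vi03} and your Step~2 by the citation to \cite[Theorem~4.3]{AST}, your Step~3 and conclusion via Corollary~\ref{c3} are exactly the paper's proof.
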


We can expect to extend the previous result by removing the partial hyperbolicity assumption
in a lower dimensional setting. In the case that $\dim M=3$, Doering~\cite{Doering} proved that 
every $C^1$-robustly transitive flow on a three-dimensional manifolds is Anosov and consequently 
satisfies the specification property.  If $\dim M=4$ we can remove the assumption of partial hyperbolicity
from the previous corollary.

\begin{maincorollary}
\label{dim4}
Suppose that $\dim M=4$. Then there is a $C^1$-open and dense subset $O$ in $\mathcal{RNTF}$ 
so that every $X\in O$ does not satisfy the weak specification property.
\end{maincorollary}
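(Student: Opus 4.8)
The plan is to reduce Corollary~\ref{dim4} to the partially hyperbolic setting, where Corollaries~\ref{cor1} and~\ref{c3} and Theorem~\ref{thm:main1} are already at our disposal. It is enough to exhibit, for each $X\in\mathcal{RNTF}$ with $\dim M=4$ and each $C^1$-neighbourhood of it, a nonempty $C^1$-open subset $\mathcal V$ of that neighbourhood all of whose elements fail the weak specification property; since $\mathcal{RNTF}$ is $C^1$-open, $\mathcal V\subset\mathcal{RNTF}$ automatically, and the union $O$ of all such $\mathcal V$ is the desired open and dense set. Observe that, taking $U=M$ in the definition of attractor, the whole manifold $M$ is a robustly transitive attractor for $X$, so every flow $C^1$-close to $X$ still has $M$ as a transitive attractor. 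We argue by cases, according to whether $X$ (after an arbitrarily small perturbation, if necessary) is nonsingular, or every flow $C^1$-close to $X$ has a singularity.

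Suppose first that $X$ is nonsingular. By the phenomenon that robust transitivity yields dominated splittings (Bonatti--D\'iaz--Pujals; in the nonsingular case applied to the linear Poincar\'e flow), the $3$-dimensional normal bundle $N=TM/\langle X\rangle$ admits a nontrivial dominated splitting whose extremal sub-bundle at one end is one-dimensional; being volume-contracting or volume-expanding, this one-dimensional extremal bundle is uniformly norm-contracting or norm-expanding. Lifting this to $TM$ and, if necessary, replacing $X$ by $-X$ (which changes neither $\mathcal{RNTF}$ nor the weak specification property), we obtain a dominated invariant splitting $TM=E^s\oplus E^c$ or $TM=E^s\oplus E^c\oplus E^u$ with $E^s$ uniformly contracting, $E^u$ (when present) uniformly expanding, and $\dim E^c\in\{2,3\}$; the bound $\dim E^c\ge 2$ follows from robust non-hyperbolicity together with the fact that a robustly transitive set has neither sinks nor sources. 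Hence $X$ (or $-X$) lies in $\mathcal{SPHF}_2(M)$ or $\mathcal{SPHF}_3(M)$. Now, since $X$ is robustly non-hyperbolic it is not a star flow, so after a $C^1$-perturbation it has a non-hyperbolic periodic orbit; a Franks-type perturbation turns this orbit into a hyperbolic one of minimal stable index $\dim E^s$, and, the resulting flow being still robustly non-hyperbolic and carrying the above dominated splitting, a standard hyperbolicity criterion for transitive sets forces it (possibly after one more small $C^1$-perturbation) to possess a further periodic orbit of strictly larger stable index. We thereby obtain a flow with two hyperbolic periodic orbits of different indices, both lying in the transitive attractor $M$. Since the splitting and these orbits persist under small $C^1$-perturbations, this produces the open set $\mathcal V$, on each element of which Corollary~\ref{cor1} (if $\dim E^c=2$) or Corollary~\ref{c3} (if $\dim E^c=3$, $X$ being nonsingular) gives the failure of the weak specification property.

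Suppose next that every flow $C^1$-close to $X$ has a singularity, so that $M$ is a robustly transitive set with singularities. By the known structure theory of such sets, $M$ is sectional-hyperbolic for $X$ or for $-X$; it is then a transitive sectional-hyperbolic attractor which, being robustly non-hyperbolic, is not hyperbolic, so Theorem~\ref{thm:main1} applies to $X$ or to $-X$, and, the weak specification property being invariant under time reversal, $X$ does not satisfy it on $M$. Since sectional hyperbolicity, transitivity and non-hyperbolicity all persist under small $C^1$-perturbations, this once more produces an open set $\mathcal V$ of flows failing the weak specification property. Collecting the sets $\mathcal V$ obtained in the two cases, over all $X\in\mathcal{RNTF}$, yields $O$.

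The main obstacle is the structural step in the nonsingular case: extracting a genuine partial hyperbolicity of $X$ or $-X$ out of the sole assumption of robust transitivity on a $4$-manifold --- i.e.\ upgrading the dominated splitting of the linear Poincar\'e flow to uniform hyperbolicity of a one-dimensional extremal bundle and transporting it to $TM$ --- while at the same time ensuring that the two hyperbolic periodic orbits produced by the perturbation genuinely have distinct indices. A secondary delicate point is deciding, in the presence of singularities, whether they can be perturbed away (reducing matters to the nonsingular case) or are robust and therefore force sectional hyperbolicity. The remaining arguments are a bookkeeping combination of Corollaries~\ref{cor1} and~\ref{c3}, Theorem~\ref{thm:main1}, and standard Franks-type $C^1$-perturbation lemmas for flows.
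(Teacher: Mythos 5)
Your overall strategy in the nonsingular case — pass through the dominated splitting of the linear Poincar\'e flow, promote the $1$-dimensional extremal bundle to uniform hyperbolicity, lift to a partially hyperbolic splitting of $TM$, obtain two periodic orbits of distinct indices after a small perturbation, and then quote Corollary~\ref{c3} (or~\ref{cor1}) — is exactly the paper's route. However, there are two concrete gaps.

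First, your ``second case'' does not actually arise, and the argument you give for it is incorrect. The paper's first step is Vivier's theorem~\cite{Vi03}: on a compact manifold, a $C^1$-robustly transitive vector field has no singularities (the definition of $\mathcal{RNTF}$ requires the \emph{whole} of $M$ to be transitive, not just a proper attractor). So the dichotomy ``nonsingular vs.\ robustly has a singularity'' is spurious. More seriously, in the branch you keep, you invoke a ``known structure theory'' asserting that a robustly transitive set with singularities is sectional-hyperbolic for $X$ or $-X$. That is the Morales--Pacifico--Pujals theorem~\cite{MPP}, which is a theorem about \emph{three}-dimensional flows; no such general statement is available when $\dim M = 4$. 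Here $\dim M = 4$, so the step you rely on is unsupported. The error is invisible only because the case is empty.

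Second, in the nonsingular case you assert that the $1$-dimensional extremal bundle of the dominated splitting ``is volume-contracting or volume-expanding, hence uniformly norm-contracting or norm-expanding,'' but you give no argument for the volume hyperbolicity (you yourself flag this as ``the main obstacle''). This step does not come for free from domination or from transitivity alone. The paper handles it by a perturbative argument: Franks' lemma for flows (as in~\cite[Lemma~4.5]{DPU}) gives a uniform bound $|\lambda_E(p)|\le (1-\delta)^T$ on the periodic eigenvalues along $E$ — otherwise a perturbation creates a repelling periodic orbit, contradicting robust transitivity — and then Ma\~n\'e's ergodic closing lemma, following Wen~\cite[Step~3, p.~347]{Wen}, upgrades this to uniform contraction of $E$. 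Similarly, your claim that ``a standard hyperbolicity criterion'' yields a second periodic orbit of distinct stable index after a perturbation is precisely where the paper cites~\cite[Theorem~4.3]{AST}, together with Gan--Wen~\cite{GW06} to show $X\notin\mathcal G^1(M)$; these references should be supplied rather than waved at. Finally, the paper also needs a small bit of linear algebra — citing~\cite[Theorem~1.5]{Sal} — to transplant the hyperbolic normal direction $E$ of the Poincar\'e flow into a genuine $DX_t$-invariant bundle $E^s \subset \langle X\rangle\oplus E$ in $TM$; your ``lifting'' is not automatic and should also be justified.
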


 Let us remark that Komuro~\cite{Ko} proved that the Lorenz attractors do not satisfy the shadowing property.
It follows from our results that these attractors do not satisfy the specification property neither. 
 Several authors considered recently either measure theoretical non-uniform specification properties (see e.g. \cite{OT,Va12}) or almost specification properties (see e.g. \cite{PS05,Th10}) to the study of the ergodic properties of a dynamical system. One remaining interesting question is to understand which partially hyperbolic flows admit weaker specification properties. A global picture
that includes the characterization of dynamical systems satisfying these weaker kinds of specification is still incomplete.

\section{Auxiliary results}\label{Aux}

In this section we provide necessary definitions and prove some auxiliary results used in the proofs of the main results.
The first is a well known result whose proof we shall include for the reader's convenience.

\begin{lemma}
\label{unstable}
Let $\Lambda$ be an attractor. 
Then, for every hyperbolic critical element $p\in \Lambda$, we have
\begin{equation*}\label{}
W^{uu}(p)\subset \Lambda.
\end{equation*}
In particular, we have that $W^{u}_{\vep}(p)\subset \Lambda$ for every $\vep>0$.
\end{lemma}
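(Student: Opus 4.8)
The plan is to show that the strong-unstable manifold of a hyperbolic critical element sits inside the attractor by combining the local invariance of unstable manifolds with the fact that the attractor absorbs a whole neighborhood of itself. First I would fix a hyperbolic critical element $p\in\Lambda$ and let $U$ be an open neighborhood of $\Lambda$ with $X_t(\bar U)\subset U$ for $t>0$ and $\Lambda=\bigcap_{t\ge 0}X_t(U)$, as in the definition of attractor. Since $p\in\Lambda$ is hyperbolic, there is $\vep>0$ small enough that the local strong-unstable manifold $W^{uu}_\vep(p)$ is a genuine $C^1$-submanifold tangent to $F^u$, and moreover we may take $\vep$ small so that $W^{uu}_\vep(p)\subset U$. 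The key local property is backward contraction: for every $x\in W^{uu}_\vep(p)$ one has $d(X_{-t}(x),X_{-t}(p))\to 0$ as $t\to+\infty$ (this is the defining property of the strong-unstable manifold, obtained by applying the stable manifold theorem to $X_{-t}$). In particular, since $X_{-t}(p)\in\Lambda\subset U$ for all $t$ and $U$ is open, there exists $t_0>0$ with $X_{-t}(x)\in U$ for all $t\ge t_0$; enlarging if necessary we get $X_{-t}(x)\in U$ for all $t\ge 0$ as well, because $W^{uu}_\vep(p)$ itself was chosen inside $U$ and the forward orbit stays in $U$ too — more carefully, one argues that the full orbit of $x$ lies in $\bar U$.

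The second step is to promote this to membership in $\Lambda$. Given $x\in W^{uu}_\vep(p)$ and any $s\ge 0$, write $x=X_s(X_{-s}(x))$. From the previous paragraph $X_{-s}(x)\in\bar U$, so $x=X_s(X_{-s}(x))\in X_s(\bar U)\subset X_s(U)$ for every $s\ge 0$; hence $x\in\bigcap_{s\ge 0}X_s(U)=\Lambda$. This shows $W^{uu}_\vep(p)\subset\Lambda$. To obtain the full strong-unstable manifold, I would use that $W^{uu}(p)=\bigcup_{t\ge 0}X_t(W^{uu}_\vep(p))$ (in the singularity case) or $W^{uu}(p)=\bigcup_{t\ge 0}X_t(W^{uu}_\vep(\mathcal O(p)))$ together with the flow-invariance of $\Lambda$: since $\Lambda$ is invariant and $W^{uu}_\vep(p)\subset\Lambda$, every forward image $X_t(W^{uu}_\vep(p))$ also lies in $\Lambda$, so their union $W^{uu}(p)$ is contained in $\Lambda$. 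The final assertion $W^u_\vep(p)\subset\Lambda$ for every $\vep>0$ then follows because $W^u_\vep(p)=\bigcup_{|t|\le\vep}X_t(W^{uu}_\vep(p))\subset W^{uu}(p)$ (for the periodic case) or $W^u_\vep(p)\subset W^{uu}(p)=W^u(p)$ (for the singularity case), both of which are inside $\Lambda$ by what was just proved; strictly, one also invokes that $W^u(p)=\bigcup_{\vep>0}W^u_\vep(p)$ exhausts the whole unstable manifold and each piece is already captured.

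The main obstacle I anticipate is the bookkeeping needed to ensure that the \emph{entire} orbit of a point $x\in W^{uu}_\vep(p)$ — not just its backward orbit — stays in $\bar U$, since the definition of attractor only gives forward absorption. The clean way around this is to observe that $W^{uu}_\vep(p)\subset U$ (by choosing $\vep$ small), so forward orbits of such $x$ remain in $U$ by forward invariance of $U$, while backward orbits remain in $U$ by the asymptotic estimate $d(X_{-t}(x),X_{-t}(p))\to 0$ and the openness of $U$ around the compact set $\Lambda$; combining the two gives $X_t(x)\in\bar U$ for all $t\in\mathbb R$, which is exactly what the intersection argument needs. Everything else is a routine combination of the stable manifold theorem (applied to the reversed flow) with the definition of attractor, which is why the statement is flagged as well known.
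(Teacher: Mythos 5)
Your proposal is essentially the paper's argument: uniform exponential backward contraction on the local strong-unstable manifold keeps $X_{-t}(W^{uu}_\vep(p))$ inside the absorbing neighborhood $U$ for all $t\ge 0$, which places $W^{uu}_\vep(p)$ in $\bigcap_{s\ge 0}X_s(U)=\Lambda$, and then flow-invariance of $\Lambda$ saturates this to the global manifolds. Two small remarks. The digression about the \emph{full} orbit lying in $\bar U$ is unnecessary: the intersection argument $x=X_s(X_{-s}(x))\in X_s(U)$ only requires the \emph{backward} orbit to stay in $U$. More importantly, for periodic $p$ the inclusion $W^u_\vep(p)=\bigcup_{|t|\le\vep}X_t(W^{uu}_\vep(p))\subset W^{uu}(p)$ is false: for $t\ne 0$ (not a multiple of the period) and $y\in W^{uu}_\vep(p)$ one has $X_t(y)\in W^{uu}(X_t(p))$, not $W^{uu}(p)$. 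The conclusion is nevertheless correct, since $W^u_\vep(p)\subset W^u(p)=\bigcup_{t\in\mathbb R}X_t(W^{uu}(p))$, and the latter lies in $\Lambda$ by the same flow-invariance you already invoked.
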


\begin{proof}
Since $\Lambda$ is an attractor let $U \subset M$ be an open neighborhood so that $X_t(\bar U) \subset U$ and
$\Lambda=\bigcap_{t\ge 0}X_t(U)$. If $p \in \Lambda$ is a hyperbolic periodic orbit for $(X_t)_t$ there are constants
$C_p>0$ and $\lambda_p \in (0,1)$ so that 
$$
d(X_{-t}(x),X_{-t}(p)) \le C_p \lambda_p^t d(x,p)
$$ 
for every $x\in W^{uu}_\vep(p)$ and $t\ge 0$.
Using this backward contraction and that $U$ is an open set, there exists a small $\vep>0$ so that 
$X_{-t}(W^{uu}_\vep(p)) 
\subset U$ for every $t\ge 0$, which proves that $W^{uu}_\vep(p) \subset \Lambda$. The $(X_t)_t$-invariance of $\Lambda$
and the equalities $W^{uu}(p)=\bigcup_{t\ge 0} X_t( W_\vep^{uu}(X_{-t}(p)) )$ and $W^{u}(p)=\bigcup_{t\ge 0} X_t (W^{uu}(p))$ guarantee that both $W^{uu}(p)$ and $W^{u}(p)$ are contained in $\Lambda$. Since the proof in the case that $p$ is a singularity 
is completely analogous we shall omit it. 
\end{proof}

\color{black}
\begin{lemma}
\label{stable}
For every hyperbolic periodic point $p$ and $\vep>0$, we can choose $\epsilon_0\in (0,\epsilon)$ such that for $x\in M$, if $d(X_t(x),X_t(p))\le \epsilon_0$ for every $t\ge 0$ then
\begin{equation*}\label{}
x\in W^s_{\vep}(p)=\bigcup_{|t|\le \vep} X_t(W^{ss}_{\vep}(p)).
\end{equation*}
\end{lemma}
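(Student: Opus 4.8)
The plan is to transfer the statement to the standard characterisation of the local stable manifold of a hyperbolic fixed point of a $C^1$ diffeomorphism by means of a Poincar\'e return map. Let $T>0$ be the minimal period of $p$. Choose a $C^1$ cross-section $\Sigma$ through $p$, transverse to $X$ and tangent at $p$ to $F^s_p\oplus F^u_p$; shrinking it we may assume that the first return map $P\colon \Sigma_0\to\Sigma$ is a well defined $C^1$ diffeomorphism onto its image on a neighbourhood $\Sigma_0$ of $p$ in $\Sigma$. Then $p$ is a hyperbolic fixed point of $P$ and, by the stable manifold theorem (see \cite{HPS}), its local stable manifold is contained in $W^{ss}(p)$ and has dimension $\dim F^s$.

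First I would recall the discrete-time fact: given $\vep>0$ there is $\delta\in(0,\vep)$ such that every $y\in\Sigma_0$ with $d(P^n(y),p)\le\delta$ for all $n\ge 0$ lies in $W^{ss}_\vep(p)$. Then, using that the orbit of $p$ is transverse to $\Sigma$ at $p$ together with continuity of the flow, I would fix a flow-box around $p$ and $\epsilon_0\in(0,\delta)$ small enough that: (a) every $z\in M$ with $d(z,p)\le\epsilon_0$ has a unique representation $z=X_{s(z)}(\pi(z))$ with $\pi(z)\in\Sigma_0$, $d(\pi(z),p)\le\delta$ and $|s(z)|\le\vep$; and (b) any orbit arc of time-length $T$ remaining $\epsilon_0$-close to $\{X_t(p):0\le t\le T\}$ crosses $\Sigma$ near $p$ exactly once, this crossing being the $P$-image of the crossing of the preceding arc.

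Now let $x\in M$ satisfy $d(X_t(x),X_t(p))\le\epsilon_0$ for every $t\ge 0$. Since $X_{nT}(p)=p$, the points $x_n:=X_{nT}(x)$ obey $d(x_n,p)\le\epsilon_0$ for all $n\ge 0$; set $y_n:=\pi(x_n)$. By (b) and the $T$-periodicity of $\cO(p)$, the $y_n$ are the consecutive returns of the forward orbit of $x$ to $\Sigma$ near $p$, hence $y_n=P^n(y_0)$ and $d(P^n(y_0),p)\le\delta$ for all $n$. The discrete-time fact then gives $y_0\in W^{ss}_\vep(p)$, and since $x=X_{s(x)}(y_0)$ with $|s(x)|\le\vep$ we conclude $x\in\bigcup_{|t|\le\vep}X_t(W^{ss}_\vep(p))=W^s_\vep(p)$, as desired.

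The step I expect to be the main obstacle is (b): one must check that staying $\epsilon_0$-close to $X_t(p)$ for all $t\ge0$ really forces the sequence of returns to $\Sigma$ to be exactly the $P$-orbit of a single point $y_0$ — no return skipped or counted twice — and that the flow-time corrections $s(x_n)$ all stay below $\vep$; this is a quantitative flow-box argument resting on the uniform transversality of the periodic orbit to the section and on the period being bounded. The remaining ingredients are the standard hyperbolic fixed-point theory applied to $P$.
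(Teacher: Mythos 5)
Your route through a Poincar\'e return map is genuinely different from the paper's, which works directly with the flow: the paper constructs an unstable cone field along the orbit of $p$, takes the $C^1$-disk $D_0$ through $x$ tangent to those cones, shows that $D_0$ intersects $W^s_\vep(p)$ at a unique point $y$ and that $X_T$ expands disks in the cone field, and then concludes $d(x,y)\le\lambda^n\theta\to 0$, so $x=y\in W^s_\vep(p)$. Your reduction to a fixed-point problem for $P$ is a reasonable alternative.

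However, there is a genuine gap in the reduction, and it sits precisely at the distinction the lemma is about. You assert that the local stable manifold of $P$ at $p$ is contained in $W^{ss}(p)$ and then that the discrete-time fact yields $y_0\in W^{ss}_\vep(p)$. This is not correct: $W^s_{\mathrm{loc}}(p,P)$ is contained in the \emph{orbital} stable manifold $W^s(p)=\bigcup_{t}X_t(W^{ss}(p))$, not in the strong stable manifold $W^{ss}(p)$. The reason is that the first-return time $\tau(y)$ is not constant equal to $T$: for $y\in W^{ss}(p)\cap\Sigma$ close to $p$ one has $P(y)=X_{\tau(y)}(y)\in W^{ss}(X_{\tau(y)}(p))$, which is not $W^{ss}(p)$ whenever $\tau(y)\ne T$. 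Consequently $W^{ss}(p)\cap\Sigma$ is not $P$-invariant and cannot equal $W^s_{\mathrm{loc}}(p,P)$, although both are tangent to $F^s_p$ at $p$. Points of $W^s_{\mathrm{loc}}(p,P)$ converge to the orbit of $p$ only after a bounded asymptotic phase shift $\sigma$, so the correct conclusion from the discrete-time fact is $y_0\in W^s_{\vep'}(p)$ for some small $\vep'$, not $y_0\in W^{ss}_\vep(p)$.

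This slip is fixable: replace the claim $y_0\in W^{ss}_\vep(p)$ by $y_0\in W^s_{\vep/2}(p)$ (shrinking $\delta$ so that the phase shift and the radius in $W^{ss}$ are both $\le\vep/2$), arrange $|s(x)|\le\vep/2$ in the flow-box step, and absorb the two time corrections into a single parameter $|t|\le\vep$ to conclude $x\in\bigcup_{|t|\le\vep}X_t(W^{ss}_\vep(p))$. With that correction, and provided you carry out the quantitative flow-box argument you flagged in step (b) — that the returns of the forward orbit of $x$ to $\Sigma$ are exactly the $P$-orbit of $y_0$ with all time corrections uniformly small — the argument is sound.
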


\begin{proof}
Let $\pi(p)>0$ be the prime period of the periodic point $p$.
We set $\Gamma:=\bigcup_{t\in [0,\pi(p)]} X_t(p)$. 
Since $p$ is hyperbolic, there exist a continuous invariant splitting $T_{\Gamma}M=F^s\oplus F^c \oplus F^u$, 
constants $\lambda_1\in
(0,1)$ and $C>0$ such that $F^c_x$ is generated by $X(x)$ and
\begin{equation}
\label{hypcritical}
\|D_xX_t|F_x^s\|\le C\lambda_1^t,\ \|(D_xX_t|F_x^u)^{-1}\|\le C\lambda_1^t
\end{equation}
for any $t\ge 0$ and $x\in\Gamma$.
It follows from \cite[Lemma 4.4]{HPPS} that there exist a neighborhood $U'$ of $\Gamma$
and a continuous splitting $T_{U'}M=\tilde{F^s}\oplus \tilde{F^c}\oplus\tilde{F^u}$
such that $\tilde{F_x^{\sigma}}=F_x^{\sigma}$ ($\sigma=s,c,u$) whenever $x\in\Gamma$.

For $x\in U'$, $\kappa>0$, we define the unstable cone field
$$
C_{\kappa}^u(x)=\{ v=v_1+v_2\in (\tilde{F}_x^s\oplus \tilde{F}_x^c) \oplus \tilde{F}_x^u : \| v_1\| \le \kappa \| v_2\| \}.
$$
By the equation (\ref{hypcritical}), there are $\kappa>0$, $0<\lambda_2<1$ and $T>0$ 
with $X_T(p)=p$ such that if $x\in\Gamma$, then 
$$D_xX_T(C_{\kappa}^u(x)) \subset C_{\frac{\kappa}{2}}^u(X_T(x)),\ 
\|D_xX_T (v)\| \ge \lambda_2^{-1}\| v\|\ (v\in C^u_{\kappa}(x)).$$
Since the splitting $T_{U'}M=\tilde{F^s}\oplus \tilde{F^c}\oplus\tilde{F^u}$ is continuous,
we can find a neighborhood $U\subset U'$ of $\Gamma$
\color{black}
and $0<\lambda<1$ such that if $X_s(x)\in U$ for $0\le s\le T$, then 
\begin{eqnarray}
D_xX_T(C_{\kappa}^u(x)) &\subset& C_{\kappa}^u(X_T(x)),~~\nonumber\\
\|D_xX_T (v)\| &\ge& \lambda^{-1}\| v\|~~(v\in C^u_{\kappa}(x)).\label{Cu}
\end{eqnarray}
Increasing $T$ if necessary we may assume that $X_{T} (W^{s}_{\vep}(p))\subset  W^{s}_{\vep}(p)$.
Choose $\delta_0>0$ (depending on $T$) such that if $d(x,p)\le \delta_0$, then $X_{t}(x)\in U$ for $0\le t\le T$.
Since $T_{p}W^{s}(p)=F^s_p\oplus F^c_p$, we have that $W^s_{\vep}(p)$ is a $C^1$ disk with $T_pW^{s}_{\vep}(p)=F^s_p\oplus F^c_p$.
So we can take $0<\vep_0<\theta<\delta_0/2K$ (where we set
$K:=\{\|D_xX_T\|:x\in M\}<\infty$) such that if $d(x,p)\le \vep_0$, then the following hold:
\begin{enumerate}

\item[$(1)$]
There is a $C^1$ disk $D\subset U$ centered at $x$ of radius $\theta$ such that 
\begin{eqnarray}\label{DCu}
&&\dim D =\dim F^u_p \text{ and } T_yD \subset C_{\kappa}^u(y)~~(\text{for all } y\in D).
\end{eqnarray}

\item[$(2)$]
Any disk centered at $x$ of radius $r$ with $\theta\le r\le K\theta$ satisfying (\ref{DCu}) intersects $W^s_{\vep}(p)$ at a unique point transversely. Such an intersection point $y$ satisfies 
\begin{eqnarray}\label{Dy}
d(y,p)\le d(y,x)+d(x,p)\le K\theta+\vep_0< \delta_0.
\end{eqnarray}

\end{enumerate}

\color{black}
Assume that $x\in M$ satisfies $d(X_t(x),X_t(p))\le \epsilon_0$ for $t\ge 0$. Let $D_0$ be a $C^1$ disk centered at $x$ of radius $\theta$ satisfying (\ref{DCu}) and $y$ be the intersection of $D_0$ and $W^s_{\vep}(p)$ (see (\ref{Dy})). Since $D_0$ is contained in a ball centered at $p$ with radius $\delta_0$, we have $X_T(D_0)\subset U$ for $0\le t\le T$. By (\ref{Cu}) and (\ref{DCu}), $X_T(D_0)$ contains a $C^1$ disk centered at {$X_T(x)$} of radius $\lambda^{-1} \theta$ satisfying (\ref{DCu}). 
Denote by $D_1$ a $C^1$ disk centered at {$X_T(x)$} of radius $\theta$ contained in {$X_T(D_0)$}. Since {$X_T(y)\in X_{T} (W^{s}_{\vep}(p))\subset W^{s}_{\vep}(p)$} and since both $D_1$ and {$X_T(D_0)$} intersect $W^{s}_{\vep}(p)$ at a unique point respectively, we have 
$$
\{ X_T(y) \} =X_T(D_0)\cap W^{s}_{\vep}(p) =D_1\cap W^{s}_{\vep}(p).
$$
Moreover, since {$X_{T}(x), X_T(y)\in D_1$}, we have
\begin{eqnarray*}
d(x,y)&=&d(X_{-T}(X_{T}(x)),X_{-T}(X_{T}(y)))\nonumber\\
&\le & \lambda d(X_{T}(x),X_{T}(y)) \le \lambda \theta. 
\end{eqnarray*}

Repeating this procedure, we find $C^1$ disks $D_n$ $(n\ge 0)$ centered at $X_{nT}(x)$ of radius $\theta$ satisfying (\ref{DCu}) such that 
$$D_{n+1}\subset X_T(D_n) \text{ and } X_{nT}(x), X_{nT}(y)\in D_n$$
for $n\ge 0$. So, for every $n\ge 0$
\begin{eqnarray*}
d(x,y)&=&d(X_{-nT}(X_{nT}(x)),X_{-nT}(X_{nT}(y)))\nonumber\\
&\le& \lambda^n \theta,
\end{eqnarray*}
which means $d(x,y)=0$. So $x\in W^s_{\vep}(p)$, which finishes the proof.
\end{proof}

\begin{rem}
\label{unstable}
An analogous result holds for the local unstable manifold as follows: for every hyperbolic periodic point $p$ and $\vep>0$, we can choose $\epsilon_0\in (0,\epsilon)$ such that for $x\in M$, if $d(X_t(x),X_t(p))\le \epsilon_0$ for $t\le 0$, then $x\in W^u_{\vep}(p)$.
\end{rem}

\begin{lemma}
\label{dense}
Let $\Lambda$ be an attractor and suppose that $\Lambda$ has the weak 
specification property. 
Then for every hyperbolic  critical element $p\in \Lambda$, the strong stable manifold $W^{ss}(p)$ is dense in $\Lambda$.
\end{lemma}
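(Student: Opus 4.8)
The plan is to exploit the weak specification property to build, for an arbitrary point $z\in\Lambda$ and arbitrary $\epsilon>0$, a point of $W^{ss}(p)$ that is $\epsilon$-close to $z$. The natural idea is to specify two pieces of orbit: a long piece of the orbit of $p$ on an interval $[0,T_1]$ (long enough that $T_1$ will exceed the gap $T(\epsilon_0)$ required by the specification property, and long enough to ``cover'' $p$ in a way that is detected by Lemma~\ref{stable}) followed, after a gap of length $T(\epsilon_0)$, by the constant specification sitting at $z$ on a short interval, or — more to the point — by a long piece of orbit through $z$. Applying weak specification with tolerance $\epsilon_0$ (where $\epsilon_0=\epsilon_0(p,\epsilon)$ is the constant furnished by Lemma~\ref{stable}) produces a point $x\in\Lambda$ whose orbit $\epsilon_0$-shadows the orbit of $p$ on an initial block of length $T_1$ and then comes $\epsilon$-close to $z$ at the prescribed later time.

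The heart of the argument is then a limiting procedure: I would let the length $T_1$ of the first block tend to infinity and simultaneously push the second block (the one near $z$) further and further into the future, keeping the gap fixed at $T(\epsilon_0)$. For each $n$ this yields a point $x_n\in\Lambda$ with $d(X_t(x_n),X_t(p))\le\epsilon_0$ for $0\le t\le T_1^{(n)}$ and $d(X_{s_n}(x_n),z)\le\epsilon$ for the appropriate later time $s_n$. Passing to a subsequence, $x_n\to x_\infty\in\Lambda$ (here $\Lambda$ compact), and by continuity $d(X_t(x_\infty),X_t(p))\le\epsilon_0$ for \emph{all} $t\ge 0$; Lemma~\ref{stable} then gives $x_\infty\in W^s_\epsilon(p)$. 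A second, parallel passage to the limit should be arranged so that the ``near $z$'' condition survives; the cleanest way is probably to fix $z$ and $\epsilon$, run the above to get for each $\epsilon$ a point $x_\infty=x_\infty(z,\epsilon)\in W^s_\epsilon(p)$ with an orbit point within $\epsilon$ of $z$, and then deduce that $z$ lies in the closure of $\bigcup_{t\in\mathbb R}X_t(W^{ss}(p))=W^s(p)$. Since $z\in\Lambda$ was arbitrary, $W^s(p)\cap\Lambda$ is dense in $\Lambda$.

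The remaining point is to upgrade density of $W^s(p)$ to density of $W^{ss}(p)$ itself. For a periodic orbit this is where one uses that $W^s(p)=\bigcup_{|t|\le\cdot}X_t(W^{ss}(p))$ together with transitivity of $\Lambda$: a point of $W^s(p)$ is on the orbit of a point of $W^{ss}(p)$, and since $\Lambda$ is transitive (being an attractor) — and in fact topologically mixing by \cite[Lemma~3.1]{AST} under the weak specification hypothesis — one can trade a bounded flow-time displacement for an approximation by saturating, or simply note that $\overline{W^{ss}(p)}\supset\overline{\bigcup_{|t|\le\tau}X_{-t}(W^{ss}(p))}$ fails in general but $\overline{W^{ss}(p)}$ is flow-invariant enough. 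For a singularity there is nothing to do since $W^{ss}(p)=W^s(p)$.

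I expect the main obstacle to be the bookkeeping in the double limit: one must choose the block lengths and the gaps so that \emph{both} the ``infinitely long shadowing of $p$'' condition and the ``$\epsilon$-closeness to $z$'' condition are preserved under the relevant limits, and one must be careful that the specification property only gives shadowing with the fixed tolerance $\epsilon_0$ and fixed gap $T(\epsilon_0)$ — so the first block length is the only free parameter to send to infinity, and the position of the second block must be adjusted accordingly. A secondary technical nuisance is the passage from $W^s_\epsilon(p)$ (a genuine local object, controlled by Lemma~\ref{stable}) back to $W^{ss}(p)$, i.e. getting rid of the bounded central-time coordinate $|t|\le\epsilon$; this should follow by letting $\epsilon\downarrow 0$ and invoking continuous dependence, but it needs to be spelled out.
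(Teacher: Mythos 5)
Your overall strategy (use weak specification, take a limit, invoke Lemma~\ref{stable}) is the right one, but the order in which you arrange the two specification blocks is wrong, and this is a genuine gap, not bookkeeping. You place the long $p$-block on $[0,T_1^{(n)}]$ and the $z$-block at the late time $s_n=T_1^{(n)}+T(\epsilon_0)$. When you send $T_1^{(n)}\to\infty$ and pass to a subsequential limit $x_n\to x_\infty$, the inequality $d(X_t(x_\infty),X_t(p))\le\epsilon_0$ for all $t\ge 0$ survives, but the condition $d(X_{s_n}(x_n),z)\le\epsilon$ does \emph{not}: since $s_n\to\infty$ there is no fixed time at which to evaluate the limit, and $x_\infty\in W^s(p)$ means that in fact $X_s(x_\infty)$ is near the orbit of $p$, not near $z$, for all large $s$. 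The ``second, parallel passage to the limit'' that would produce an $x_\infty\in W^s_\epsilon(p)$ ``with an orbit point within $\epsilon$ of $z$'' simply does not exist; if instead you recenter at the right endpoint $w_n:=X_{T_1^{(n)}}(x_n)$ and take that limit, you get a point of $W^u_\epsilon$ of a point on the orbit of $p$ whose forward image is near $z$, which proves density of $W^u(p)$, not of $W^{ss}(p)$.

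The correct arrangement is the opposite one: put the single point $z$ at time $0$ and the $p$-block on $[T,T+n]$, choosing $T\ge T(\epsilon_0)$ to be a \emph{multiple of the period} of $p$, so $X_T(p)=p$. For each $n$ you get $x_n$ with $d(x_n,z)\le\epsilon_0$ and $d(X_t(X_T(x_n)),X_t(p))\le\epsilon_0$ for $t\in[0,n]$, and in the limit $x_n\to x$ both conditions persist because the $z$-block sits at a fixed time. Lemma~\ref{stable} then gives $X_T(x)\in\bigcup_{|t|\le t_0}X_t(W^{ss}_{t_0}(p))$; since $X_T(p)=p$, the map $X_{-T}$ preserves $W^{ss}(p)$, hence $x\in X_{t_1}(W^{ss}(p))$ for some $|t_1|\le t_0$, and $y:=X_{-t_1}(x)\in W^{ss}(p)$ satisfies $d(y,z)\le d(y,x)+d(x,z)\le\epsilon+\epsilon_0\le 2\epsilon$ (here $t_0<\epsilon$ is fixed at the outset so that flowing for time $\le t_0$ displaces points by at most $\epsilon$). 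This also dissolves your ``secondary technical nuisance'' of passing from $W^s_\epsilon(p)$ to $W^{ss}(p)$: you do not let $\epsilon\to0$, nor do you invoke mixing or a saturation argument; you simply flow back by the bounded time $t_1$, which the a priori choice of $t_0$ controls. The periodicity of $p$ and the choice of $T$ as a multiple of the period are the two ingredients your sketch is missing.
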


\begin{proof}
We consider only the case when $p$ is periodic since the singularity case can be shown similarly.
Let $\vep>0$ and $z\in \Lambda$ be fixed arbitrarily.
Since $(X_t)_{t\in \mathbb R}$ is the flow generated by the vector field $X$ we can take $0<t_0<\epsilon$ 
 so that $d(x,X_t(x))\le \epsilon$ for any $x\in\Lambda$ and $|t|\le t_0$.
By Lemma \ref{stable} we can choose $\epsilon_0\in (0,t_0)$ such that
if $d(X_t(x),X_t(p))\le \epsilon_0$ for every $t>0$ then
\begin{equation}\label{Wst}
x\in W^s_{t_0}(p)=\bigcup_{|t|\le t_0} X_t(W^{ss}_{t_0}(p)).
\end{equation}

Let $T(\epsilon_0)>0$ be as in the definition of the specification property
and choose $T\ge T(\epsilon_0)$ so that {$X_T(p)=p$}.
By the weak specification property, there are $x_n\in\Lambda$ so that $d(x_n,z) \le \epsilon_0$ 
and {$d(X_t(X_T(x_n)),X_t(p))\le\epsilon_0$} for every $t\in [0,n]$.
By compactness of $\Lambda$, we may assume that $(x_n)_{n\in \mathbb N}$ is convergent to some point $x\in\Lambda$ satisfying $d(x,z)\le\epsilon_0$ and {$d(X_t(X_T(x)),X_t(p))\le\epsilon_0$} for every $t>0$.
Using (\ref{Wst}), we have
$$X_T(x)\in\bigcup_{|t|\le t_0} X_t(W^{ss}_{t_0}(p))$$
and we can find $t_1\in [-t_0,t_0]$ such that 
{$X_T(x)\in X_{t_1}(W^{ss}_{t_0}(p))$}.
Since $T$ is the period of $p$, we have $x\in X_{t_1}(W^{ss}(p))$.
Thus, there exists a point $y\in W^{ss}(p)$ such that $x=X_{t_1}(y)$ 
and consequently
$$
d(y,z)\le d(y,x)+d(x,z)\le \epsilon+\epsilon_0\le 2\epsilon,
$$
which implies that $W^{ss}(p)$ is dense in $\Lambda$.
\end{proof}

\begin{lemma}\label{TWu}
Let $\Lambda$ be a partially hyperbolic attractor with splitting $T_{\Lambda}M=E^s \oplus E^c$ and let $q\in \Lambda$ be a hyperbolic critical element. Then we have $T_xW^u(q) \subset E^c_x$ for every $x\in W^u(q)$.
\end{lemma}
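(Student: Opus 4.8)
The plan is to show that every tangent vector to $W^u(q)$ has \emph{bounded} backward orbit in $TM$, whereas a nonzero component in $E^s$ would be forced to grow exponentially under the backward flow; the continuity of the splitting (angle bound) then leaves no room for an $E^s$-component. No distinction between the periodic and singular cases will be needed.

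I would start with two preliminary observations. First, by Lemma~\ref{unstable} applied to the hyperbolic critical element $q$ we have $W^u(q)\subset\Lambda$, so the partially hyperbolic splitting $E^s\oplus E^c$ is defined at every point of $W^u(q)$; since it is a continuous splitting over the compact set $\Lambda$, the angle between $E^s_x$ and $E^c_x$ is bounded away from zero, so there is a constant $\kappa>0$ with $\|v\|\ge\kappa\,\|\pi^s_x v\|$ for every $x\in\Lambda$ and $v\in T_xM$, where $\pi^s_x$ denotes the projection of $T_xM=E^s_x\oplus E^c_x$ onto $E^s_x$ along $E^c_x$. Second, from the uniform contraction $\|D_xX_t|E^s_x\|\le C\lambda^t$ ($t\ge 0$) together with the $DX_t$-invariance of $E^s$, one gets $\|D_xX_{-t}(w)\|\ge C^{-1}\lambda^{-t}\|w\|$ for every $w\in E^s_x$ and $t\ge 0$ (write $z=X_{-t}(x)$ and $u=D_xX_{-t}(w)\in E^s_z$, so $w=D_zX_t(u)$ and $\|w\|\le C\lambda^t\|u\|$); that is, nonzero vectors of $E^s$ are expanded at a definite exponential rate under $DX_{-t}$.

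The core step is the estimate: for every $x\in W^u(q)$ and every $v\in T_xW^u(q)$ one has $\sup_{t\ge 0}\|D_xX_{-t}(v)\|<\infty$. Using $W^u(q)=\bigcup_{\tau\in\mathbb R}X_\tau(W^{uu}(q))$, write $x=X_\tau(x_0)$ with $x_0\in W^{uu}(q)$; since $W^u(q)$ is obtained by saturating $W^{uu}(q)$ under the flow, $T_xW^u(q)=D_{x_0}X_\tau\bigl(T_{x_0}W^{uu}(q)\bigr)+\mathbb R\,X(x)$, so $v=D_{x_0}X_\tau(w_0)+a\,X(x)$ for some $w_0\in T_{x_0}W^{uu}(q)$ and $a\in\mathbb R$, whence $D_xX_{-t}(v)=D_{X_{-t}(x_0)}X_\tau\bigl(D_{x_0}X_{-t}(w_0)\bigr)+a\,X(X_{-t}(x))$. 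The term $a\,X(X_{-t}(x))$ is bounded by $|a|\sup_{y\in\Lambda}\|X(y)\|$ for all $t\ge 0$; in the first summand, $\|D_{x_0}X_{-t}(w_0)\|$ stays bounded as $t\to+\infty$ because $W^{uu}(q)$ is the strong unstable manifold of the hyperbolic critical element $q$, whose tangent vectors are contracted by the backward flow by the invariant manifold theorem \cite{HPS}, while $\|D_{X_{-t}(x_0)}X_\tau\|$ is bounded uniformly in $t\ge 0$ since $\tau$ is fixed and $\{X_{-t}(x_0):t\ge 0\}\subset\Lambda$ is contained in a compact set.

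Finally I would combine the pieces. Given $v\in T_xW^u(q)$, write $v=v^s+v^c$ with $v^s\in E^s_x$, $v^c\in E^c_x$. By invariance of the splitting, $\pi^s_{X_{-t}(x)}\bigl(D_xX_{-t}(v)\bigr)=D_xX_{-t}(v^s)$, so for every $t\ge 0$ we obtain $\|D_xX_{-t}(v)\|\ge\kappa\,\|D_xX_{-t}(v^s)\|\ge\kappa\,C^{-1}\lambda^{-t}\,\|v^s\|$. Letting $t\to+\infty$ and invoking the boundedness from the core step forces $v^s=0$, i.e. $v=v^c\in E^c_x$; hence $T_xW^u(q)\subset E^c_x$. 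The delicate point to watch is exactly the neutral (flow) direction: for a periodic $q$ the tangent vectors of $W^u(q)$ are \emph{not} all backward-contracted — the flow direction $X(x)$ is merely bounded in norm — so the argument must be organized around ``bounded backward orbit'' rather than ``backward contraction'', which is precisely what the core estimate delivers, and that is where essentially all the work sits.
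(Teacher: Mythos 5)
Your proof is correct and rests on the same two opposing estimates as the paper's --- backward contraction of $T W^{uu}(q)$ together with a bounded flow direction on one side, backward expansion of $E^s$ on the other --- but it is organized differently. The paper argues by contradiction and passes to the critical orbit: assuming $v\in T_xW^u(q)\setminus E^c_x$, it takes $t_n=n\pi(q)$ so that $X_{-t_n}(x)\to y\in\mathcal{O}(q)$, normalizes $D_xX_{-t_n}(v)$, shows the normalized vectors converge to a unit vector $v'\in T_yW^u(q)\cap E^s_y$ (using that the $E^c$-component becomes negligible relative to the $E^s$-component), and only then derives the contradiction between boundedness and blow-up of $\|D_yX_{-t_n}(v')\|$ via the clean decomposition $T_yW^u(q)=\langle X(y)\rangle\oplus T_yW^{uu}(y)$. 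You instead derive the contradiction directly at $x$: the decomposition $v=D_{x_0}X_\tau(w_0)+aX(x)$ with $x_0\in W^{uu}(q)$ gives a uniform bound $\sup_{t\ge 0}\|D_xX_{-t}(v)\|<\infty$, while the angle bound $\|D_xX_{-t}(v)\|\ge\kappa\|D_xX_{-t}(v^s)\|\ge\kappa C^{-1}\lambda^{-t}\|v^s\|$ forces blow-up unless $v^s=0$. Your version avoids the normalization-and-limit step, treats the periodic and singular cases uniformly, and replaces the explicit ratio estimate $\|D_xX_{-t_n}(v_c)\|/\|D_xX_{-t_n}(v_s)\|\to 0$ by the one-time observation that the continuous splitting over the compact set $\Lambda$ has angle bounded below; both proofs need $W^u(q)\subset\Lambda$ (Lemma~\ref{unstable}) so that the splitting is available along the whole backward orbit. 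The chain-rule identity $D_xX_{-t}\circ D_{x_0}X_\tau=D_{X_{-t}(x_0)}X_\tau\circ D_{x_0}X_{-t}$ that you use implicitly is correct, and the boundedness of $\|D_{X_{-t}(x_0)}X_\tau\|$ already follows from compactness of $M$ with $\tau$ fixed.
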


\begin{proof}
We deal with the case when $q$ is a hyperbolic periodic point.
Let $\pi(q)>0$ be the prime period of $q$.
To reach a contradiction we assume that there exist $x\in W^u(q)$ and $v \in T_xW^u(q) \setminus E^c_x$. 
Since $x\in W^u(q)=\bigcup_{t\in \mathbb R} X_t(W^{uu}(q))$, we can choose $y\in{\mathcal O(q)}$ such that $x\in W^{uu}(y)$. 
If we put $t_n=n \pi(q)$ for $n\in {\mathbb N}$, then $d(X_{-t_n}(x), y)\to 0$ as $n\to \infty$. 
By the $(X_{t})_t$-invariance of $W^{u}(q)$, we have 
\begin{equation}\label{Wuq}
D_xX_{-t_n}(T_xW^{u}(q)) \to T_yW^{u}(q)~~(n\to \infty).
\end{equation}

Since $v \in T_xW^u(q) \setminus E^c_x$, we can take $v_s\in E^s_x \setminus \{ 0\}$ and $v_c\in E^c_x$ such that $v=v_s+v_c$. 
By the definition of partial hyperbolicity, we have $D_xX_{-t_n}(v_s)\in E^s_{X_{-t_n}(x)}$, $D_xX_{-t_n}(v_c)\in E^c_{X_{-t_n}(x)}$ and
\begin{eqnarray*}
\| D_xX_{-t_n} (v_c) \| / \| D_xX_{-t_n} (v_s) \| 
&\le&  \| D_xX_{-t_n}|E^c\| \|v_c\| / \| D_{X_{-t_n}(x)} X_{t_n}|E^s\|^{-1} \| v_s \| \nonumber\\
&\le&  \| D_{X_{-t_n}(x)} X_{t_n}|E^s\| \| D_xX_{-t_n}|E^c\| (\|v_c\| /  \|v_s \|)\nonumber\\
&\le&  C^{1} \lambda^{t_n} (\|v_c\| /  \|v_s \|)\to 0 ~~(n\to \infty).
\end{eqnarray*}
Without loss of generality we may assume that $D_{x}X_{-t_n}(v)/\| D_{x}X_{-t_n}(v)\|$ converges to some unit vector $v'\in E_y^s$.
By (\ref{Wuq}) we have $v'\in T_yW^{u}(q) \cap E_y^s$.

By the hyperbolicity of $q$ and $y\in {\mathcal O}(q)$, there exists $0<\lambda_0 <1$ such that $\| D_y X_{-t} | T_yW^{uu}(y) \| \le C\lambda_0^t$ for $t\ge 0$.
Since $W^{u}(q)=\bigcup_{t\in \mathbb R} X_t(W^{uu}(y))$, we have 
$T_yW^u(q)=\langle X(y) \rangle \oplus T_yW^{uu}(y)$. Here $\langle X(y) \rangle$ denotes the one dimensional subspace generated by $X(y)$.
So we can find $v_1\in \langle X(y) \rangle$ and $v_2\in T_yW^{uu}(y)$ such that $v'=v_1+ v_2$. Then 
\begin{eqnarray*}
\| D_yX_{-t_n} (v') \| &\le&  \| D_yX_{-t_n} (v_1) \| + \| D_yX_{-t_n} (v_2) \| \\
&\le&  \| v_1 \| + \| D_yX_{-t_n}|T_yW^{uu}(y)\| \|v_2\| \\
&\le&  \| v_1 \| + C\lambda_0^{t_n} \|v_2\| \\
&\to&  \| v_1 \| ~~(n\to \infty).
\end{eqnarray*}
However, since $v' \in E^s_y$, we have
\begin{eqnarray*}
\| D_yX_{-t_n} (v') \| &\ge& \| D_{y} X_{t_n}|E^s\|^{-1} \| v' \| \ge  C^{-1} \lambda^{-t_n} \| v' \| \to \infty ~~(n\to \infty),
\end{eqnarray*}
which is a contradiction.
\end{proof}

\begin{lemma}\label{cdisk}
Let $\Lambda$ be as above and let $q\in \Lambda$ be a hyperbolic critical element. Then for every $x\in W^u(q)$ there exist a neighborhood $U(x)$ of $x$ and a $C^1$-disk $D(x) \subset M$ such that 
\begin{eqnarray*}
x\in W^u_x(q) \subset D(x) \text{ and } T_xD(x)=E^c_x.
\end{eqnarray*}
where $W^u_x(q)$ denotes the connected component of $W^u(q) \cap U(x)$ containing $x$.
\end{lemma}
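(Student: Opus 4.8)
The plan is to reduce the statement to an elementary local thickening of $W^u_x(q)$ inside a chart, the only substantial input being Lemma~\ref{TWu}. Write $c=\dim E^c$, $n=\dim M$ and $u=\dim W^u(q)$. By Lemma~\ref{TWu} we have $T_xW^u(q)\subset E^c_x$, so in particular $u\le c$; if $u=c$ then $T_xW^u(q)=E^c_x$ and any local unstable disk at $x$ already works, so from now on I assume $u<c$.

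First I would fix a $C^1$ chart $\varphi\colon U_0\to\mathbb{R}^n$ with $\varphi(x)=0$ and, composing with a linear isomorphism of $\mathbb{R}^n$, arrange that $D_x\varphi$ carries $E^c_x$ onto $\mathbb{R}^c\times\{0\}$ and $E^s_x$ onto $\{0\}\times\mathbb{R}^{n-c}$. Shrinking $U_0$ if necessary we may assume that the connected component of $W^u(q)\cap U_0$ through $x$ is a $C^1$-embedded $u$-disk (a local unstable disk at $x$); call its image $\Sigma:=\varphi(W^u_x(q))$, a $C^1$ submanifold of $\mathbb{R}^n$ through the origin with $T_0\Sigma\subset\mathbb{R}^c\times\{0\}$, by Lemma~\ref{TWu} read in the chart. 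Now choose a linear subspace $Q\subset\mathbb{R}^c\times\{0\}$ complementary to $T_0\Sigma$ inside $\mathbb{R}^c\times\{0\}$, so $\dim Q=c-u$, and consider the $C^1$ map $F\colon\Sigma\times B_Q(0,\rho)\to\mathbb{R}^n$ given by $F(y,w)=y+w$. Its differential at $(0,0)$ sends $(v,w)\in T_0\Sigma\times Q$ to $v+w$ and is injective with image $T_0\Sigma\oplus Q=\mathbb{R}^c\times\{0\}$; hence, after shrinking $\Sigma$ and $\rho$, $F$ is a $C^1$ embedding, its image $D:=F(\Sigma\times B_Q(0,\rho))$ is a $C^1$-disk of dimension $u+(c-u)=c$, one has $\Sigma=F(\Sigma\times\{0\})\subset D$, and $T_0D=DF_{(0,0)}(T_0\Sigma\times Q)=\mathbb{R}^c\times\{0\}$.

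Finally I would pull back: set $D(x):=\varphi^{-1}(D)$ and take $U(x)\subset U_0$ a neighborhood of $x$ small enough that $\varphi^{-1}(\Sigma)\subset D(x)$ and that the component of $W^u(q)\cap U(x)$ containing $x$ lies inside $\varphi^{-1}(\Sigma)$ (this last point being automatic once $U_0$ has been fixed well, since a path in $W^u(q)\cap U(x)$ starting at $x$ stays in the $U_0$-component through $x$). Then $D(x)$ is a $C^1$-disk in $M$, $x\in W^u_x(q)\subset D(x)$, and $T_xD(x)=(D_x\varphi)^{-1}(\mathbb{R}^c\times\{0\})=E^c_x$, as desired; the case of a singular $q$ is handled identically, since $W^u(q)$ is again a $C^1$ submanifold to which Lemma~\ref{TWu} applies. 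I do not expect a genuine obstacle here --- this is just the inverse function theorem applied to a graph-thickening --- and the only points requiring care are the finitely many successive shrinkings of the neighborhood (so that $W^u_x(q)$ is genuinely embedded, that $F$ is an embedding, and that the embedded unstable disk actually lands inside $D(x)$) together with the observation that the thickening subspace $Q$ must be taken \emph{inside} $E^c_x$, which is precisely what forces $T_xD(x)$ to equal $E^c_x$ rather than merely to contain $T_xW^u(q)$.
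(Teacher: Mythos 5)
Your proof is correct and follows essentially the same idea as the paper's: use Lemma~\ref{TWu} to place $T_xW^u(q)$ inside $E^c_x$, choose a complementary subspace (your $Q$, the paper's $G_x$) to $T_xW^u(q)$ \emph{inside} $E^c_x$, and thicken the local unstable disk in that direction, so that the tangent space of the resulting disk at $x$ is exactly $E^c_x$. The only cosmetic difference is that the paper writes $W^u(q)$ locally as a graph $v\mapsto v+\psi(v)$ via the exponential map and thickens that graph, whereas you work in a linear chart and thicken via the addition map $F(y,w)=y+w$, invoking the inverse function theorem; both amount to the same graph-thickening construction.
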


\begin{proof}
Let $x\in W^u(q)$ and put $F_x=T_xW^u(q)$.
Then by Lemma \ref{TWu} we have $F_x \subset E^c_x$.
So we can take a subspace $G_x\subset E^c_x$ such that $F_x \oplus G_x =E^c_x$.
By the definition of $F_x$ there exist $\vep>0$ and a $C^1$-map $\psi:F_x \cap \{ \| v\|\le \vep \} \to G_x\oplus E^s_x$ with $D_x\psi=0$ such that 
\begin{eqnarray}\label{Vx}
V(x)=\exp_x \{ v+\psi(v) : v\in F_x,~\| v \| \le \vep  \} \subset W^u(p).
\end{eqnarray}
If we put 
\begin{eqnarray*}
D(x)=\exp_x \{ v+v'+ \psi(v) : v\in F_x,~ \| v \| \le \vep,~ v'\in G_x,~ \| v' \| \le \vep \},
\end{eqnarray*}
then $D(x)$ is a $C^1$-disk with $T_xD(x)=F_x \oplus G_x(=E^c_x)$  for sufficiently small $\vep>0$.
Let $U(x)=\exp_x \{ v : \| v\| < \vep/2\}$.
Then, by (\ref{Vx}), we have 
\begin{eqnarray*}
x\in W^u_x(q)=V(x) \cap U(x) \subset D(x).
\end{eqnarray*}
\end{proof}

We finish this section with some considerations on the existence of (local) stable foliations and holonomies.
In the rest of this section, let $\Lambda$ be a (transitive) partially hyperbolic attractor with splitting $T_\Lambda M= E^s\oplus E^c$. Without loss of generality, we may assume that the metric $\| \cdot \|$ is adapted, i.e. for all $x\in \Lambda$ 
\begin{equation*}
\| D_x X_t | E^s_x \|< 1 \text{ and } \| D_x X_t | E^s_x \|~ \| D_{X_t(x)} X_{-t} |E^c_{X_t(x)} \| <1~~(t>0)
\end{equation*}
(see \cite[Theorem 4]{G} for the existence of adapted metrics). 
Since $\Lambda$ is compact, we can take $0<\lambda'<1$ such that, for every $x\in \Lambda$,
\begin{equation}\label{adapted1}
\| D_x X_1 | E^s_x \|< \lambda' \text{ and } \| D_x X_1 | E^s_x \|~ \| D_{X_1(x)} X_{-1} |E^c_{X_1(x)} \| <\lambda'.
\end{equation}

\begin{lemma} \cite[Proposition 2.3]{Mane0} \label{s-disk}
There exists a continuous family of $C^1$-disks $\{ \mathcal{F}^s_{loc}(x) \}_{x\in \Lambda}$ such that
\begin{enumerate}
\item[(1)] $T_x\mathcal{F}^s_{loc}(x)=E^s_x$ for every $x\in \Lambda$.
\item[(2)] 
$X_1(\mathcal{F}^s_{loc}(x)) \subset \mathcal{F}^s_{loc}(X_1(x))$ for every $x\in \Lambda$.
\item[(3)] 
$\| D_{y}X_1|T_y\mathcal{F}^s_{loc}(x) \|< \lambda'$ for every $y\in \mathcal{F}^s_{loc}(x)$ and $x\in \Lambda.$ 
In particular, for $y\in \mathcal{F}^s_{loc}(x)$, we have $d(X_{n}(x), X_{n}(y))\to 0$ as $n\to \infty$.
\end{enumerate}
\end{lemma}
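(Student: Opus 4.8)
The statement is the classical (local, strong) stable manifold theorem applied to the diffeomorphism $f:=X_1$ on the partially hyperbolic set $\Lambda$; the plan is to build the family $\{\mathcal{F}^s_{loc}(x)\}_{x\in\Lambda}$ as the fixed point of a graph transform in the Hadamard--Perron style. Fix $r>0$ small and, for $x\in\Lambda$, use the exponential chart to regard $f$ near $x$ as a map $\hat f_x\colon T_xM\supset B(0,r)\to T_{f(x)}M$ with $\hat f_x(0)=0$ and $D_0\hat f_x=D_xX_1$; relative to $T_xM=E^s_x\oplus E^c_x$ and $T_{f(x)}M=E^s_{f(x)}\oplus E^c_{f(x)}$ one has $\hat f_x(v,w)=(A_x v+\phi_x(v,w),\,B_x w+\psi_x(v,w))$, where $A_x=D_xX_1|E^s_x$, $B_x=D_xX_1|E^c_x$, and $\phi_x,\psi_x$ have small $C^1$-norm on $B(0,r)$, uniformly in $x\in\Lambda$ by compactness. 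We look for $\mathcal{F}^s_{loc}(x)=\exp_x(\mathrm{graph}\,\sigma_x)$ for a $C^1$ map $\sigma_x\colon E^s_x(\rho)\to E^c_x$ with $\sigma_x(0)=0$, $D_0\sigma_x=0$ and Lipschitz constant at most $1$ (a cone condition), where $E^s_x(\rho)$ is the ball of radius $\rho\ll r$ in $E^s_x$.

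Next I would introduce the graph transform $\Gamma$ on the complete metric space $\mathcal{S}$ of continuous families $\{\sigma_x\}_{x\in\Lambda}$ as above, by declaring $\mathrm{graph}\,(\Gamma\sigma)_x$ to be the connected component through the origin of $\hat f_x^{-1}(\mathrm{graph}\,\sigma_{f(x)})$, restricted to $E^s_x(\rho)$; since $A_x$ is invertible and $\phi_x$, $\psi_x$ are small this is well defined, and any fixed point of $\Gamma$ satisfies $f(\mathcal{F}^s_{loc}(x))\subset\mathcal{F}^s_{loc}(f(x))$, which is item (2). The two facts to check, both coming from the adapted inequalities (\ref{adapted1}) --- the contraction $\|D_xX_1|E^s_x\|<\lambda'$ and the domination $\|D_xX_1|E^s_x\|\,\|D_{X_1(x)}X_{-1}|E^c_{X_1(x)}\|<\lambda'$ --- are: (a) after shrinking $r$ and $\rho$, $\Gamma$ preserves the cone condition (pulling a graph back through $\hat f_x$ rescales its base by $A_x$ and its fibre by $B_x^{-1}$, and only the product $\|A_x\|\,\|B_x^{-1}\|<\lambda'$ enters, so graphs cannot bend away from $E^s$ and stay inside the chart); (b) $\Gamma$ is a $\lambda'$-contraction for the ``conical'' distance $\|\sigma-\tilde\sigma\|_*:=\sup_{x\in\Lambda}\sup_{0\ne v\in E^s_x(\rho)}\|\sigma_x(v)-\tilde\sigma_x(v)\|/\|v\|$, for the same reason. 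Hence $\Gamma$ has a unique fixed point $\{\sigma_x\}\in\mathcal{S}$; its continuity in $x$ is automatic, since $\mathcal{S}$ consists of continuous families and $\Gamma$ preserves continuity ($x\mapsto\hat f_x$ and $x\mapsto\sigma_{f(x)}$ being continuous).

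For the $C^1$ conclusion and items (1) and (3) I would then run the companion transform on tangent planes, i.e.\ apply the $C^1$ section (fibre-contraction) theorem of \cite{HPS}: the candidate derivatives $v\mapsto D_v\sigma_x$ obey their own fixed-point equation, whose contraction rate is again governed by the domination in (\ref{adapted1}), so $\sigma_x$ is $C^1$ with $(x,v)\mapsto D_v\sigma_x$ continuous and $D_0\sigma_x=0$; the last equality gives $T_x\mathcal{F}^s_{loc}(x)=E^s_x$, which is (1). For (3), since $D_v\sigma_x$ stays close to $0$ on all of $E^s_x(\rho)$ (cone condition), every tangent space $T_y\mathcal{F}^s_{loc}(x)$ lies in a thin cone around the subbundle $E^s$; by continuity of $y\mapsto D_yX_1$ and of the cones, together with (\ref{adapted1}) and compactness of $\Lambda$, one has $\|D_yX_1|L\|<\lambda'$ for every $y$ in a fixed neighbourhood of $\Lambda$ and every $L$ in that cone, so after one further shrinking of the disks $\|D_yX_1|T_y\mathcal{F}^s_{loc}(x)\|<\lambda'$ for all $y\in\mathcal{F}^s_{loc}(x)$; iterating this bound along the orbit, using (2), gives $d(X_n(x),X_n(y))\le(\lambda')^n\,\ell_{x,y}$, where $\ell_{x,y}$ is the length of a path from $x$ to $y$ inside $\mathcal{F}^s_{loc}(x)$, and this tends to $0$.

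The main obstacle is the $C^1$ regularity of $\sigma_x$ jointly with its continuous dependence on the base point $x\in\Lambda$ (which is only an invariant set, not a submanifold): this is exactly where the domination hypothesis, rather than mere contraction of $E^s$, is indispensable, since it is what makes the tangent graph transform a contraction. The rest --- uniform smallness of the chart errors $\phi_x,\psi_x$ and the bookkeeping relating the charts at $x$ and $f(x)$ --- is routine. Alternatively, one simply invokes \cite[Proposition 2.3]{Mane0}, where precisely this statement is proved.
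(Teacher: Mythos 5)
The paper does not actually prove this lemma: it is stated as a direct citation of \cite[Proposition~2.3]{Mane0}, applied to the time-one map $f=X_1$ under the adapted metric of \eqref{adapted1}. Your graph-transform sketch is the standard Hadamard--Perron argument that underlies Ma\~n\'e's result (and the treatment in \cite{HPS}), and it is correct. You have isolated precisely where the domination hypothesis is essential: to leading order the pull-back of a Lipschitz graph is $v\mapsto B_x^{-1}\sigma_{f(x)}(A_x v)$, so both cone invariance and the contraction of $\Gamma$ in your conical norm $\|\cdot\|_*$ are governed by $\|A_x\|\,\|B_x^{-1}\|<\lambda'$, and the same product controls the tangent (fibre-contraction) transform that gives $C^1$ regularity with $D_0\sigma_x=0$, hence items (1) and (2). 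One step to make explicit for item~(3): to have cones around $E^s$ defined on the \emph{whole} disk $\mathcal{F}^s_{loc}(x)$, not merely at its centre $x\in\Lambda$, you need a continuous extension of the splitting $E^s\oplus E^c$ to a neighbourhood of $\Lambda$ (this is \cite[Lemma~4.4]{HPPS}, which the paper invokes later in Lemma~\ref{invariance}); with that extension in hand, your compactness-and-continuity argument gives the uniform bound $\|D_yX_1|_L\|<\lambda'$ for $y$ near $\Lambda$ and $L$ in the cone, and iterating it along the orbit using (2) yields the asserted convergence $d(X_n(x),X_n(y))\to 0$.
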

We set
\begin{equation*}
\mathcal{F}^s(x)=\bigcup_{n=1}^{\infty} X_{-n}(\mathcal{F}^s_{loc}(X_n(x)))~~(x\in \Lambda).
\end{equation*}
Then, by Lemma \ref{s-disk}, we can check that for $x\in \Lambda$
\begin{enumerate}
\item[$\bullet$] $X_1(\mathcal{F}^s(x))=\mathcal{F}^s(X_1(x))$,
\item[$\bullet$] $T_x\mathcal{F}^s(x)=E^s_x$ and 
\item[$\bullet$] if $y\in \mathcal{F}^s(x)$, then $d(X_t(x),X_t(y))\to 0$ as $t\to \infty$.
\end{enumerate}
Moreover, 
the following holds.
\begin{lemma} \label{invariance}
Let $\{ \mathcal{F}^s(x) \}_{x\in \Lambda}$ be as above. Then the following hold:
\begin{enumerate}
\item[$(1)$] For $x\in \Lambda$, $t\in {\mathbb R}$, we have 
\begin{equation*}
X_t(\mathcal{F}^s(x)) = \mathcal{F}^s(X_t(x)).
\end{equation*}
\item[$(2)$] If $\mathcal{F}^s(x)\cap \mathcal{F}^s(y)\not=\emptyset$ for $x, y\in \Lambda$, then we have $\mathcal{F}^s(x)=\mathcal{F}^s(y)$.
\end{enumerate}
\end{lemma}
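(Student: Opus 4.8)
The plan is to prove (1) and (2) separately, in both cases starting from the increasing-union description $\mathcal{F}^s(x)=\bigcup_{n\ge 1}X_{-n}(\mathcal{F}^s_{loc}(X_n(x)))$ together with the uniform contraction along the stable disks from Lemma \ref{s-disk}(3). The only genuinely delicate point --- and what I expect to be the main obstacle --- is the passage from the time-one map, which is the only dynamics that Lemma \ref{s-disk} controls directly, to the full flow $(X_t)_{t\in\mathbb{R}}$. This passage rests on the local, $\Lambda$-uniform structure of the disks $\mathcal{F}^s_{loc}$: near each point they coincide with the set of nearby points whose forward orbit converges at the strong-stable rate (the continuous-time counterpart of Lemma \ref{stable}), and two such local disks cannot cross. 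Both facts belong to the stable manifold theory behind Lemma \ref{s-disk}; alternatively one could first replace $\{\mathcal{F}^s_{loc}(x)\}_{x\in\Lambda}$ by a flow-adapted subfamily satisfying $X_s(\mathcal{F}^s_{loc}(x))\subset\mathcal{F}^s_{loc}(X_s(x))$ for every $s\in[0,1]$, which trivializes part (1).

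For (1), iterating the relation $X_1(\mathcal{F}^s(x))=\mathcal{F}^s(X_1(x))$ (already checked above) and its inverse yields $X_n(\mathcal{F}^s(x))=\mathcal{F}^s(X_n(x))$ for all $n\in\mathbb{Z}$. It then suffices to prove $X_t(\mathcal{F}^s(x))\subset\mathcal{F}^s(X_t(x))$ for every $t\in\mathbb{R}$, since applying this to $-t$ and $X_t(x)$ gives the reverse inclusion. So let $y\in\mathcal{F}^s(x)$ and fix $N$ with $X_n(y)\in\mathcal{F}^s_{loc}(X_n(x))$ for all integers $n\ge N$; by Lemma \ref{s-disk}(3) the distance $d(X_n(y),X_n(x))$ then tends to $0$ at the uniform strong-stable rate. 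For a large integer $m$, write $X_m(X_t(y))=X_{\{t\}}(X_{m+\lfloor t\rfloor}(y))$ and $X_m(X_t(x))=X_{\{t\}}(X_{m+\lfloor t\rfloor}(x))$, with $\{t\}\in[0,1)$; since the time-$\{t\}$ map has bounded distortion and $m+\lfloor t\rfloor$ is large, $X_m(X_t(y))$ is close to $X_m(X_t(x))$ and its entire forward orbit remains uniformly close to that of $X_m(X_t(x))$, converging at the strong-stable rate. By the local characterization of the stable disks this forces $X_m(X_t(y))\in\mathcal{F}^s_{loc}(X_m(X_t(x)))$ for $m$ large, hence $X_t(y)\in\mathcal{F}^s(X_t(x))$.

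For (2), suppose $z\in\mathcal{F}^s(x)\cap\mathcal{F}^s(y)$, so that $X_n(z)\in\mathcal{F}^s_{loc}(X_n(x))\cap\mathcal{F}^s_{loc}(X_n(y))$ for all large $n$. Given $w\in\mathcal{F}^s(y)$, we have $X_n(w)\in\mathcal{F}^s_{loc}(X_n(y))$ for all large $n$, and by Lemma \ref{s-disk}(3) both $X_n(w)$ and $X_n(z)$ converge to $X_n(y)$ along $\mathcal{F}^s_{loc}(X_n(y))$, so $d(X_n(w),X_n(z))\to 0$. Since $X_n(z)$ also lies in $\mathcal{F}^s_{loc}(X_n(x))$, for $n$ large $X_n(w)$ sits in a small neighborhood of $X_n(z)$ on which $\mathcal{F}^s_{loc}(X_n(x))$ and $\mathcal{F}^s_{loc}(X_n(y))$ coincide, whence $X_n(w)\in\mathcal{F}^s_{loc}(X_n(x))$ and therefore $w\in\mathcal{F}^s(x)$. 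This gives $\mathcal{F}^s(y)\subset\mathcal{F}^s(x)$, and equality follows by symmetry.
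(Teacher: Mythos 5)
Your outline is close in spirit to the paper's, and you correctly locate the delicate point (passing from the time-one map to the full flow), but you do not prove the fact on which everything else rests. Both your part~(1) and your part~(2) reduce to the assertion that near a point the disk $\mathcal{F}^s_{loc}(x)$ is \emph{characterized} as the set of nearby points whose forward orbit converges at the strong-stable rate, or equivalently that two local stable disks whose forward orbits stay uniformly close must coincide near a common point. You attribute this to ``the stable manifold theory behind Lemma~\ref{s-disk},'' but the cited Ma\~n\'e statement gives only the three listed conclusions: $T_x\mathcal{F}^s_{loc}(x)=E^s_x$, forward invariance $X_1(\mathcal{F}^s_{loc}(x))\subset\mathcal{F}^s_{loc}(X_1(x))$, and contraction by $\lambda'$ under $X_1$. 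None of these is the converse inclusion you need, and it does not follow formally.

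This local coincidence property is exactly what the bulk of the paper's proof establishes: it is the content of the claim~(\ref{FstB}), namely that $\mathcal{F}_\tau(x)\cap B_{\delta/4}(x)=\mathcal{F}_\rho(x)\cap B_{\delta/4}(x)$ for $\tau,\rho\in[0,1]$, and of the analogous statement used for part~(2) under the hypothesis $\max\{d(X_t(x),X_t(z)),d(X_t(y),X_t(z))\}\le\delta/16$. The paper's argument introduces the cone fields $C^s_\kappa$, $C^c_\kappa$, finds a neighborhood of $\Lambda$ in which they are invariant with controlled rates, and then derives a contradiction by comparing the forward contraction of points on the disks (estimate~(\ref{dnyz})) against the centre-cone expansion (estimate~(\ref{Cc2})). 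Without an argument of this type, both of your deductions --- that $X_m(X_t(y))$ must lie on $\mathcal{F}^s_{loc}(X_m(X_t(x)))$ once its forward orbit stays close and contracts, and that $\mathcal{F}^s_{loc}(X_n(x))$ and $\mathcal{F}^s_{loc}(X_n(y))$ coincide near $X_n(z)$ --- remain unjustified, so the proposal has a genuine gap at precisely the step that consumes most of the paper's effort. The alternative you sketch, replacing $\{\mathcal{F}^s_{loc}\}$ by a family satisfying $X_s(\mathcal{F}^s_{loc}(x))\subset\mathcal{F}^s_{loc}(X_s(x))$ for all $s\in[0,1]$, would indeed trivialize~(1), but constructing such a family requires a comparable cone-field argument and you do not supply it.
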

\begin{proof}
It follows from \cite[Lemma 4.4]{HPPS} that there exist a neighborhood $U'$ of $\Lambda$
and a continuous splitting $T_{U'}M=\tilde{E^s}\oplus \tilde{E^c}$
such that $\tilde{E_x^{\sigma}}=E_x^{\sigma}$ ($\sigma=s,c$) whenever $x\in\Lambda$.
For $x\in U'$, $\kappa>0$, we define
\begin{eqnarray}
C_{\kappa}^s(x)&=&\{ v=v_1+v_2\in \tilde{E}_x^s\oplus \tilde{E}_x^c : \| v_2\| \le \kappa \| v_1\| \} \text{ and } \nonumber\\
C_{\kappa}^c(x)&=&\{ v=v_1+v_2\in \tilde{E}_x^s\oplus \tilde{E}_x^c : \| v_1\| \le \kappa \| v_2\| \}. \nonumber
\end{eqnarray}
Let $\vep>0$ be such that $e^{-5\vep}>\lambda'$ and assume that $\kappa>0$ is small so that the inequalities hold:
\begin{eqnarray}
\|D_xX_1 (v)\| &\le& e^{\vep} \|D_{x}X_{1}|\tilde{E}^s_x\|~ \| v\|~ (v\in C^s_{\kappa}(x), x\in U')  \text{ and } \label{Cs}\\
\|D_xX_1 (v)\| &\ge& e^{-\vep} \|D_{X_1(x)}X_{-1}|\tilde{E}^c_x\|^{-1}~ \| v\|~ ( v\in C^c_{\kappa}(x), x\in U').\label{Cc}
\end{eqnarray}

Since $\tilde{E_x^c}$ and $\tilde{E_x^s}$ are $DX_1$-invariant and satisfy (\ref{adapted1}) for points $x\in \Lambda$, we can choose a small neighborhood $U\subset U'$ of $\Lambda$ satisfying that for $x\in U$
\begin{equation}\label{CcCc}
D_xX_1(C_{\kappa}^c(x)) \subset C_{\lambda' \kappa}^c(X_1(x)).
\end{equation}
It follows from (\ref{Cs}) and (\ref{Cc}) that there is $\delta'>0$ such that if $x\in \Lambda$, $y\in M$ and $d(x,y)\le \delta'$, then $y\in U$ and 
\begin{eqnarray}
\|D_yX_1 (v)\| &\le& e^{2\vep} \|D_{x}X_{1}|\tilde{E}^s_x\| ~ \| v\|~ (v\in C^s_{\kappa}(y)), \label{Cs2}\\
\|D_yX_1 (v)\| &\ge& e^{-2\vep} \|D_{X_1(x)}X_{-1}|\tilde{E}^c_x\|^{-1} ~ \| v\|~ (v\in C^c_{\kappa}(y)).\label{Cc2}
\end{eqnarray}

To prove (1), we put {$\mathcal{F}_\tau(x)=X_{-\tau}(\mathcal{F}_{loc}^s(X_{\tau}(x)))$} for $x\in \Lambda$ and $\tau\in {\mathbb R}$.
By the $DX_{\tau}$-invariance of ${E}^s_x$, we have that 
\begin{equation*}
T_{x}\mathcal{F}_{\tau}(x)=\tilde{E}^s_{x}~~(\tau\in {\mathbb R}, x\in \Lambda).\end{equation*}
Thus there exists $0<\delta<\delta'$ such that if $0\le \tau \le 1$, $x\in \Lambda$ and 
{$y\in \mathcal{F}_{\tau}(x)$}
with $d(x,y)\le \delta$, then 
\begin{equation}\label{TFsC}
T_{y}\mathcal{F}_{\tau}(x)\in C_{\kappa}^s(y).
\end{equation}
By Lemma \ref{s-disk} we remark that
\begin{equation}\label{XnFt}
X_n(\mathcal{F}_\tau(x))\subset \mathcal{F}_\tau(X_n(x))
\end{equation}
for $x\in \Lambda$ and $n\in \mathbb N$.
We can show that for {$\tau, \rho\in [0,1]$} and $x\in \Lambda$
\begin{equation}\label{FstB}
\mathcal{F}_{\tau}(x) \cap B_{\delta/4}(x)=\mathcal{F}_{\rho}(x) \cap B_{\delta/4}(x). 
\end{equation}
Indeed, to reach a contradiction we assume that the equality (\ref{FstB}) does not hold. By (\ref{TFsC}) there exist 
$y\in \mathcal{F}_{\tau}(x) \cap B_{\delta/4}(x)$, 
$z\in \mathcal{F}_{\rho}(x) \cap B_{\delta/4}(x)$
 $(y\not= z)$ and a $C^1$-disk $D\subset B_{\delta/2}(x)$ containing $y$ and $z$ such that
\begin{equation*}\label{}
T_{w}D\in C_{\kappa}^c(w) ~(w\in D).
\end{equation*}
Then by (\ref{adapted1}), (\ref{Cs2}) and (\ref{TFsC})
\begin{eqnarray}\label{dnyz}
&&\max\{ d(X_n(y), X_n(x)), d(X_n(z), X_n(x)) \}\le (e^{2\vep} \lambda')^n \delta/4
\end{eqnarray}
for $n=1,2,\cdots$.
Let $D_n$ be the connected component of $X_n(D)\cap B_{\delta/2}(X_n(x))$ containing $X_n(y)$ $(n=1,2,\cdots)$. Then by (\ref{dnyz}) and (\ref{CcCc}) we have that $D_n$ contains $X_n(z)$ and $T_{w}D_n\in C_{\kappa}^c(w)$ for $w\in D_n$.
Thus by (\ref{Cc2})
\begin{equation*}\label{lower}
d(X_n(y), X_n(z))\ge \left(\prod_{i=1}^{n} e^{-2\vep} \|D_{X_i(x)}X_{-1}|\tilde{E}^c\|^{-1} \right) d(y,z).
\end{equation*}

On the other hand, by {(\ref{Cs2}), (\ref{TFsC}) and (\ref{XnFt})}, we have 
\begin{eqnarray*}
d(X_n(y), X_n(z)) 
&\le &d(X_n(y), X_n(x)) + d(X_n(x), X_n(z)) \\
&\le & \left(\prod_{i=0}^{n-1} e^{2\vep} \|D_{X_i(x)}X_{1}|\tilde{E}^s_x\|\right) \delta.
\end{eqnarray*}
Therefore 
\begin{eqnarray*}
0<d(y,z)&\le& \left( \prod_{i=1}^{n} e^{4\vep} \|D_{X_i(x)}X_{1}|\tilde{E}^s_x\| \|D_{X_i(x)}X_{-1}|\tilde{E}^c\| \right) \delta\\
&\le &(e^{4\vep} \lambda')^n \delta\to 0~(n\to \infty),
\end{eqnarray*}
which is a contradiction. Thus \eqref{FstB} holds.

By Lemma \ref{s-disk}, there exists $m\in {\mathbb N}$ such that 
$$
X_{m}(\mathcal{F}_{\tau}(y)) \subset \mathcal{F}_{\tau}(X_m(y)) \cap B_{\delta/4}(X_m(y))
$$ 
for $y\in \Lambda$ and $\tau \in [0,1]$.
By (\ref{FstB}) we can check that for any $x\in \Lambda$ and $\tau\in [0,1]$
\begin{eqnarray*}
X_{-\tau}(\mathcal{F}^s(X_{\tau}(x)))
&=&\bigcup_{n=1}^{\infty} X_{-n}(\mathcal{F}_{\tau}(X_n(x)))\\
&=&\bigcup_{n=1}^{\infty} X_{-n}(\mathcal{F}_{\tau}(X_n(x)) \cap B_{\delta/4}(X_n(x)))\\
&=&\bigcup_{n=1}^{\infty} X_{-n}(\mathcal{F}_{0}(X_n(x)) \cap B_{\delta/4}(X_n(x)))\\
&=&\bigcup_{n=1}^{\infty} X_{-n}(\mathcal{F}_{0}(X_n(x)))
=\mathcal{F}^s(x),
\end{eqnarray*}
and so $\mathcal{F}^s(X_{\tau}(x))=X_{\tau}(\mathcal{F}^s(x))$, which implies (1).

Now we prove (2).
Let $x,y\in \Lambda$ satisfy that $z\in \mathcal{F}^s(x)\cap \mathcal{F}^s(y)$ for some $z\in M$. 
By the same argument as for (\ref{FstB}), we can prove that
if 
\begin{equation}\label{Xtxyz}
\max\{d(X_t(x), X_t(z)), d(X_t(y), X_t(z))\} \le \delta/16
\end{equation}
for $t\ge 0$, then  
\begin{equation*}
\mathcal{F}^s_{loc}(x) \cap B_{\delta/4}(x)=\mathcal{F}^s_{loc}(y) \cap B_{\delta/4}(x),
\end{equation*}
were $\delta$ is as above.
This means that $\mathcal{F}^s(x)=\mathcal{F}^s(y)$.
In general, the condition (\ref{Xtxyz}) holds for sufficiently large $T$, 
because
$$\max\{d(X_t(x), X_t(z)),d(X_t(y), X_t(z))\} \to 0$$
as $n\to \infty$.
By using (1) we have $X_T(\mathcal{F}^s(x)) = \mathcal{F}^s(X_T(x))= \mathcal{F}^s(X_T(y))= X_T(\mathcal{F}^s(y))$, which gives the desired equality.
\end{proof}

\begin{lemma}
\label{perstable}
Let $\Lambda$ be a partially hyperbolic attractor with splitting $T_{\Lambda}M=E^s \oplus E^c$.
For a hyperbolic critical element $p\in \Lambda$ with $\dim W^{ss}(p)=\dim E^s$, we have $\mathcal{F}^s(p)=W^{ss}(p)$.
Moreover, for $x\in W^{ss}(p)\cap \Lambda$, we have $\mathcal{F}^s(x)=W^{ss}(p)$.
\end{lemma}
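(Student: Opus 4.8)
The plan is to establish, in order: (a) that the stable bundle $E^s$ of the ambient partially hyperbolic splitting coincides along the orbit $\mathcal{O}(p)$ with the stable bundle $F^s$ of the hyperbolic critical element $p$; (b) the inclusion $\mathcal{F}^s(p)\subset W^{ss}(p)$; (c) the reverse inclusion $W^{ss}(p)\subset \mathcal{F}^s(p)$, which is the crux; and (d) the ``moreover'' clause, which will follow at once from (a)--(c) together with the holonomy property in Lemma~\ref{invariance}(2). I treat the case of a periodic point $p$; the singularity case is entirely analogous and somewhat simpler, since there $\mathcal{O}(p)=\{p\}$.

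For (a): the orbit $\mathcal{O}(p)$ is compact and $E^s|_{\mathcal{O}(p)}$ is a $DX_t$-invariant subbundle that is uniformly contracted, so the spectral radius of $D_pX_T$ restricted to $E^s_p$, where $T=\pi(p)$, is strictly smaller than one. Hence $E^s_p$ is contained in the sum of the generalized eigenspaces of $D_pX_T$ associated with eigenvalues of modulus $<1$, which is precisely $F^s_p$. Since $W^{ss}(p)$ is a $C^1$ submanifold tangent to $F^s$, we have $\dim F^s_p=\dim W^{ss}(p)=\dim E^s=\dim E^s_p$, so $E^s_p=F^s_p$, and invariance propagates this equality along $\mathcal{O}(p)$; in the singularity case one argues the same way with the linearization $DX(p)$ in place of $D_pX_T$. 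Inclusion (b) is immediate: if $y\in \mathcal{F}^s(p)$ then $X_n(y)\in \mathcal{F}^s_{loc}(X_n(p))$ for some $n$, so $d(X_t(y),X_t(p))\to 0$ as $t\to\infty$ by Lemma~\ref{s-disk}(3), that is, $y\in W^{ss}(p)$.

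For (c), I would first observe that, by (a), $\mathcal{F}^s_{loc}(p)$ is a $C^1$ disk through $p$ tangent to $E^s_p=F^s_p$; it is forward contracted by the flow (Lemma~\ref{s-disk}) and, since $\mathcal{F}^s_{loc}(p)\subset\mathcal{F}^s(p)$ and (b) holds, it is contained in $W^{ss}(p)$. By uniqueness of the local strong stable manifold of the hyperbolic critical element $p$ it therefore coincides with $W^{ss}_\vep(p)$ on a neighborhood of $p$, for some small $\vep>0$ which we may take to satisfy $\vep<\pi(p)$. Now fix $y\in W^{ss}(p)$ and let $\vep_0\in(0,\vep)$ be as in Lemma~\ref{stable}. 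Since $d(X_t(y),X_t(p))\to 0$ and $X_{kT}(p)=p$, we may choose $k$ large enough that $d(X_t(X_{kT}(y)),X_t(p))\le \vep_0$ for all $t\ge 0$; Lemma~\ref{stable} then gives $X_{kT}(y)\in W^s_\vep(p)=\bigcup_{|t|\le\vep}X_t(W^{ss}_\vep(p))$, say $X_{kT}(y)=X_{t_1}(y')$ with $y'\in W^{ss}_\vep(p)$ and $|t_1|\le\vep$. Both $y'$ and $X_{kT}(y)$ lie in $W^{ss}(p)$, so $d(X_s(p),X_{s+t_1}(p))\le d(X_s(p),X_{s+kT}(y))+d(X_{s+t_1}(y'),X_{s+t_1}(p))\to 0$ as $s\to\infty$; since this quantity is continuous and $T$-periodic in $s$ it is identically zero, hence $X_{t_1}(p)=p$ and $t_1=0$. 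Thus $X_{kT}(y)=y'\in W^{ss}_\vep(p)\subset\mathcal{F}^s_{loc}(p)\subset\mathcal{F}^s(p)$, and Lemma~\ref{invariance}(1) gives $y=X_{-kT}(X_{kT}(y))\in X_{-kT}(\mathcal{F}^s(p))=\mathcal{F}^s(X_{-kT}(p))=\mathcal{F}^s(p)$. In the singularity case one uses instead the identity $W^{ss}(p)=\bigcup_{t\ge 0}X_{-t}(W^s_\vep(p))$ from the preliminaries, and there is no $t_1$ to eliminate.

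Finally, for (d): if $x\in W^{ss}(p)\cap\Lambda$ then, by the equality $\mathcal{F}^s(p)=W^{ss}(p)$ just proved, $x\in\mathcal{F}^s(p)$; since $x\in\Lambda$ we also have $x\in\mathcal{F}^s(x)$, hence $\mathcal{F}^s(x)\cap\mathcal{F}^s(p)\ne\emptyset$ and Lemma~\ref{invariance}(2) yields $\mathcal{F}^s(x)=\mathcal{F}^s(p)=W^{ss}(p)$. The step I expect to require the most care is the identification, inside (c), of $\mathcal{F}^s_{loc}(p)$ with the genuine local strong stable manifold $W^{ss}_\vep(p)$ near $p$: this is where the hypothesis $\dim E^s=\dim W^{ss}(p)$ really enters, ensuring that $E^s_p$ exhausts $F^s_p$ and that no point is forward asymptotic to $\mathcal{O}(p)$ purely along the center direction; and one must resist deducing coincidence of the two disks merely from their being tangent at $p$, the uniqueness of the local strong stable manifold of a hyperbolic critical element being what actually forces it. Alternatively, one can prove $W^{ss}_\vep(p)\subset\mathcal{F}^s(p)$ directly by a cone-field argument in the spirit of the proof of Lemma~\ref{invariance}, using the cone fields $C^s_\kappa$, $C^c_\kappa$ and the adapted metric.
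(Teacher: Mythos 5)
Your argument is correct, but it follows a genuinely different route from the one the paper sketches: the authors remark that Lemma~\ref{perstable} can be proved ``similarly to Lemma~\ref{invariance}'', i.e.\ by re-running the cone-field/graph-transform estimates, whereas you bootstrap from already-proved results (Lemma~\ref{stable} for the inclusion of shadowed points in $W^s_\vep(p)$, Lemma~\ref{s-disk}(3) for forward contraction, and Lemma~\ref{invariance} for flow-invariance and coherence of the foliation), so that no new cone-field computation is needed. Each approach has a payoff: the paper's route is self-contained at the level of the cone-field machinery, while yours is shorter and makes the logical dependence on the earlier lemmas transparent; it also isolates cleanly the role of the hypothesis $\dim W^{ss}(p)=\dim E^s$, via your step (a) showing $E^s_p=F^s_p$ and hence that $\mathcal{F}^s_{loc}(p)$ is tangent to $F^s_p$.

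Two small remarks, neither of which is a gap. First, in step (c) you can avoid invoking ``uniqueness of the local strong stable manifold'': once (b) gives $\mathcal{F}^s_{loc}(p)\subset W^{ss}(p)$ and (a) gives $\dim\mathcal{F}^s_{loc}(p)=\dim E^s=\dim W^{ss}(p)$, invariance of domain already shows that $\mathcal{F}^s_{loc}(p)$ contains a neighborhood of $p$ in $W^{ss}(p)$, hence $W^{ss}_\vep(p)\subset\mathcal{F}^s_{loc}(p)$ for $\vep$ small; this is marginally cleaner than the uniqueness argument you sketch and which you yourself flag as the delicate point. Second, in (b) Lemma~\ref{s-disk}(3) gives $d(X_n(p),X_n(y))\to 0$ along integer times $n$, and passing to all real $t\to\infty$ requires the (routine) Lipschitz bound on $X_s$ for $s\in[0,1]$; worth stating for completeness. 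Your handling of the reparametrization $t_1$ in the periodic case, using $T$-periodicity and continuity of $s\mapsto d(X_s(p),X_{s+t_1}(p))$ to force $t_1=0$, is a nice touch and is exactly the point where the flow setting differs from the discrete one.
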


The proof of this lemma is similar to that of Lemma \ref{invariance} and for that reason we shall omit it.
For $z\in \Lambda$ and $\mu>0$ we set
$$
\mathcal{F}^s_{\mu}(z):=\{w\in \mathcal{F}^s(z):\rho^s(z,w)\le\mu\},
$$
where $\rho^s$ is the distance in $\mathcal{F}^s(z)$ induced by the
Riemannian metric. 
By Lemma \ref{perstable} we have
\begin{equation}\label{FsWssm}
\mathcal{F}^s_{\mu}(p)=W^{ss}_{\mu}(p)
\end{equation}
for a hyperbolic critical element $p\in \Lambda$ with $\dim W^{ss}(p)=\dim E^s$.

In the next proposition, the time-continuous version of \cite[Proposition~3]{BDT},
we recall some results relating some
shadowing properties with the location of the shadowing point in stable disks.
First we introduce a notation. Recall that $W^{s}(p)=\bigcup_{t\in \mathbb R} X_t(W^{ss}(p))$.
For $x\in W^{s}(p)$  and $\eta>0$ we will consider the local stable disk around $x$ in $W^{s}(p)$ given by 
$$
\gamma_{\eta}^s(x):=\{z\in W^{s}(p):d^s(x,z)\le\eta\}
$$
where $d^s$ is the distance in $W^{s}(p)$ induced by the Riemannian metric.

\begin{proposition}
\label{hairu2}
Let $\Lambda$ be a partially hyperbolic attractor with splitting $T_{\Lambda}M=E^s \oplus E^c$. For a hyperbolic critical element $p\in \Lambda$ with $\dim W^{ss}(p)=\dim E^s$,
there are $\varepsilon_1>0$ and $L>0$ such that for any $\varepsilon\in(0,\varepsilon_1)$
the following holds: if $x\in W^{ss}(p)\cap \Lambda$, $z\in M$ and $d(X_{t}(z),X_{t}(x))\le\varepsilon$ for any $t>0$
then  $z\in\gamma^s_{L\varepsilon}(x)$.
\end{proposition}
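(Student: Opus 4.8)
The plan is to locate $z$ inside the stable manifold $W^s(p)$ up to a small flow time, reduce the estimate to one about the strong-stable manifold, and then control the intrinsic distance there by a plaque argument; this is the flow analogue of the proof of \cite[Proposition~3]{BDT}. We argue for $p$ periodic of prime period $\pi$ (the singularity case is analogous and simpler, there being no flow direction to keep track of), and record two standard uniformities: since $\mathcal{O}(p)$ is a hyperbolic periodic orbit, the constants in Lemma~\ref{stable} can be chosen uniformly along $\mathcal{O}(p)$; and since $p$ is regular, $\|X\|$ is bounded below on $\mathcal{O}(p)$, so there is $c_1>0$ with $d(\zeta,X_s(\zeta))\ge c_1^{-1}|s|$ for all $\zeta\in\mathcal{O}(p)$ and $|s|\le\pi/2$. \textbf{Step 1 ($z\in W^s(p)$).} Fix $x\in W^{ss}(p)\cap\Lambda$, say $x\in W^{ss}(\zeta_0)$ with $\zeta_0\in\mathcal{O}(p)$, and $z$ with $d(X_t(z),X_t(x))\le\varepsilon$ for $t>0$ (hence also at $t=0$). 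Choose $T_0\in\pi\N$ so large that $d(X_t(x),X_t(\zeta_0))\le\varepsilon$ for all $t\ge T_0$. Since $X_{T_0}(\zeta_0)=\zeta_0$, for $t\ge0$ we get $d(X_t(X_{T_0}(z)),X_t(\zeta_0))\le d(X_{T_0+t}(z),X_{T_0+t}(x))+d(X_{T_0+t}(x),X_{T_0+t}(\zeta_0))\le2\varepsilon$; choosing $\varepsilon_1$ below the (uniform) threshold of Lemma~\ref{stable} for $\zeta_0$, that lemma gives $X_{T_0}(z)\in W^s_{\varepsilon^*}(\zeta_0)\subset W^s(\mathcal{O}(p))=W^s(p)$ for a fixed small $\varepsilon^*$, so $z=X_{-T_0}(X_{T_0}(z))\in W^s(p)$ by flow invariance. (Here $T_0$ depends on $x,\varepsilon$, but the constants $\varepsilon_1,L$ produced below will not.)

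\textbf{Step 2 (reduction to $W^{ss}$).} Now $x,z\in W^s(p)=\bigsqcup_{\zeta\in\mathcal{O}(p)}W^{ss}(\zeta)$, say $z\in W^{ss}(\zeta_1)$. Letting $t\to\infty$ in $d(X_t(z),X_t(x))\le\varepsilon$, using $d(X_t(x),X_t(\zeta_0))\to0$, $d(X_t(z),X_t(\zeta_1))\to0$ and $\pi$-periodicity of $t\mapsto d(X_t(\zeta_0),X_t(\zeta_1))$, we get $d(X_t(\zeta_0),X_t(\zeta_1))\le\varepsilon$ for \emph{all} $t$; the speed lower bound forces $\zeta_1=X_{s^*}(\zeta_0)$ with $|s^*|\le c_1\varepsilon$ (shrinking $\varepsilon_1$ if needed). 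Put $w:=X_{-s^*}(z)\in W^{ss}(\zeta_0)$. Since $d(X_t(z),X_{t-s^*}(z))\le|s^*|\,\|X\|_\infty$, we obtain $d(X_t(x),X_t(w))\le M\varepsilon$ for all $t\ge0$, with $M:=1+c_1\|X\|_\infty$. Finally $d^s(x,z)\le d^s(x,w)+d^s(w,X_{s^*}(w))\le d^{ss}(x,w)+c_1\|X\|_\infty\,\varepsilon$ (using $d^s\le d^{ss}$ on $W^{ss}(\zeta_0)\subset W^s(p)$ and that the flow arc from $w$ to $X_{s^*}(w)$ lies in $W^s(p)$), so it suffices to find a uniform $L'$ with $d^{ss}(x,w)\le L'\varepsilon$; then $L:=L'+c_1\|X\|_\infty$ works.

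\textbf{Step 3 (the intrinsic estimate).} By Lemma~\ref{perstable}, $\mathcal{F}^s(x)=W^{ss}(\zeta_0)$, so $d^{ss}$ there is the intrinsic distance in the leaf $\mathcal{F}^s(x)$. Using Lemma~\ref{s-disk} and compactness of $\Lambda$, we may take $\mathcal{F}^s_{loc}(y)$ ($y\in\Lambda$) to be the intrinsic $\rho_0$-ball around $y$ in $\mathcal{F}^s(y)$ for a fixed small $\rho_0>0$; these are uniformly $C^1$ embedded disks, forward nested ($X_1(\mathcal{F}^s_{loc}(y))\subset\mathcal{F}^s_{loc}(X_1(y))$), and their $X_{-1}$-images are again uniformly $C^1$ disks on which intrinsic and ambient distance differ by at most a uniform factor $C_{**}$ for pairs close enough. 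Since $d(X_t(x),X_t(w))\to0$, we have $d^{ss}(X_n(x),X_n(w))<\rho_0$ for $n$ large, so $w\in\bigcup_n X_{-n}(\mathcal{F}^s_{loc}(X_n(x)))$; let $N$ be minimal with $X_N(w)\in\mathcal{F}^s_{loc}(X_N(x))$. We claim, for $\varepsilon_1$ so small that $C_{**}M\varepsilon_1<\rho_0$, that $X_j(w)\in\mathcal{F}^s_{loc}(X_j(x))$ for all $0\le j\le N$: assuming this for $j+1$, the point $X_j(w)=X_{-1}(X_{j+1}(w))$ lies in the uniform $C^1$ disk $X_{-1}(\mathcal{F}^s_{loc}(X_{j+1}(x)))\subset\mathcal{F}^s(X_j(x))$, which contains $X_j(x)$; since $d(X_j(x),X_j(w))\le M\varepsilon$, we get $d^{ss}(X_j(x),X_j(w))\le C_{**}M\varepsilon<\rho_0$, i.e. $X_j(w)\in\mathcal{F}^s_{loc}(X_j(x))$. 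Taking $j=0$ yields $d^{ss}(x,w)\le C_{**}M\varepsilon$, so $L'=C_{**}M$; together with Steps~1--2 this gives $z\in\gamma^s_{L\varepsilon}(x)$.

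\textbf{Main obstacle.} The technical heart is Step~3: a strong-stable leaf may accumulate on $\Lambda$ and return very close to $x$ with large intrinsic distance, so $d^{ss}(x,w)\le L'\varepsilon$ cannot follow from $d(x,w)\le M\varepsilon$ alone — it is essential that the \emph{entire} forward orbit of $z$ shadows that of $x$, which is exactly what furnishes the estimates $d(X_j(x),X_j(w))\le M\varepsilon$ for all $j\le N$ that drive the downward induction. The remaining care is bookkeeping: making the plaque geometry uniform (uniform $\rho_0$, uniform $C_{**}$, uniform threshold in Lemma~\ref{stable}) while allowing $T_0$ and $N$ to depend on $x$, the point being that the disks and arcs involved stay short with an endpoint in $\Lambda$, hence inside the region where the cone-field estimates of Lemma~\ref{invariance} (e.g.\ \eqref{Cs2}) and the uniformities apply.
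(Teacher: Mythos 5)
Your proof is correct and follows essentially the same route as the paper's: you use Lemma~\ref{stable} to place $z$ in the stable manifold, bound the flow-time discrepancy $s^*$ via the speed lower bound along $\mathcal{O}(p)$ (the paper's $t_1$ and $\kappa$), and then propagate the intrinsic stable-plaque estimate backward along the orbit using the uniform comparability of ambient and leafwise distances (the paper's constant $2$ from \eqref{dss} is your $C_{**}$). The only cosmetic difference is that you run the backward continuation as a discrete induction on $n\le N$ whereas the paper uses the infimum of the set $I=\{t\ge0:\rho^s(X_t(x),X_{t-t_1}(z))\le 2L_0\varepsilon\}$; the content is the same.
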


\begin{proof}
We consider only the case when $p$ is periodic since the singularity case can be shown similarly.
Put $\kappa=\min \{ \| X(X_t(p)) \| : t\in {\mathbb R}\}$ and note that $\kappa >0$.
Then we can take $t_0>0$ such that 
\begin{equation}\label{l}
d(X_t(p), X_s(p))\ge \kappa |t-s|/2
\end{equation}
for $|t-s|\le t_0$.

We claim that there exists $0<\mu\le t_0$ such that if $x\in \Lambda$ and 
{$y\in \mathcal{F}^s_{\mu}(x)$}, then 
\begin{equation}\label{dss}
\rho^{s}(x,y)\le 2d(x,y).
\end{equation}
Indeed, by the fact that $\| D_0 \exp_x \| =1$ and by Lemma \ref{s-disk} (1), for $0<\nu<\sqrt{2}-1$, there exists $\mu >0$ such that if $x\in \Lambda$, then
\begin{enumerate}
\item[$(1)$] $\| D_v \exp_x \| < 1+\nu$ for $v\in T_xM \cap \{ \|v \|<\mu\}$; 
\item[$(2)$] there exists a $C^1$ map $\psi_x:{E}_x^s \cap \{ \| v \| <\mu\}\to (E_x^s)^{\perp}$ such that
\begin{eqnarray*}
&&\exp_x^{-1}\{ \mathcal{F}^s_{\mu}(x) \} \subset \{ v + \psi_x(v) : v\in E^s_x, \| v \| < \mu\} \text{ and}\\
&&\| D_v\psi_x \|\le \nu \text{ for } v\in E^s_x \cap \{ \| v \| <\mu\}.
\end{eqnarray*}
Here $(E_x^s)^{\perp}$ is the orthogonal complement of ${E}_x^s$.
\end{enumerate}
Since, for $y\in \mathcal{F}^s_{\mu}(x)$, we can take $v\in {E}_x^s \cap \{ \| v \| <\mu\}$ such that $y=\exp_x (v + \psi_x (v))$, we have
$$\rho^s(x,y)\le (1+\nu)^2 \| v \| \le (1+\nu)^2 \| v + \psi_x (v)\|<2d(x,y),$$
which proves the claim.

By Lemma \ref{stable} we can choose $0<2\varepsilon_0<\mu/4$
such that for {$x\in M$, if $d(X_t(x), X_{t}(p))\le 2\varepsilon_0$} for every $t\ge 0$, then 
\begin{equation}\label{Wsmu}
x\in W^s_{\mu/4}(p)=\bigcup_{|t|\le \mu/4} X_t(W^{ss}_{\mu/4}(p)).
\end{equation}

Put $K=\max \{ \| X(x) \| : x\in M\}$ and $L_0=1+ 2 K /\kappa$.
Let {$0<\varepsilon < \varepsilon_1=\min \{ \varepsilon_0, \mu/4L_0\}$}.
Since $x\in W^{ss}(p)$, there is a sufficiently large $T>0$ such that 
\begin{equation*}
X_T(p)=p,~~
d(X_{t+T}(x), X_{t}(p))\le \varepsilon_0
\end{equation*}
for all $t\ge 0$. 
By the definition of $W^{ss}(p)$, (\ref{FsWssm}) and (\ref{Wsmu}) we have 
\begin{equation}\label{XTxp}
X_T(x)\in W^{ss}_{\mu/4}(p)=\mathcal{F}^s_{\mu/4}(p).
\end{equation}
By the assumption of $z$, we have 
\begin{equation*}
d(X_{t+T}(z), X_{t}(p))\le 
d(X_{t+T}(z), X_{t+T}(x)) + d(X_{t+T}(x), X_{t}(p))\le 2\varepsilon_0
\end{equation*}
for $t\ge 0$. By (\ref{FsWssm}) and (\ref{Wsmu}) we can find $t_1$ with $|t_1|\le \mu/4$ such that
\begin{equation}\label{XTzp}
X_{T}(z)\in X_{t_1}(W^{ss}_{\mu/4}(p))=X_{t_1}(\mathcal{F}^s_{\mu/4}(p)).
\end{equation}
Combining (\ref{XTxp}) and (\ref{XTzp}) we have
\begin{equation}\label{XTt1z}
X_{T-t_1}(z)\in \mathcal{F}^s_{\mu/2}(X_{T}(x)).
\end{equation}

Since $x\in W^{ss}(p)$, we have $d(X_{t}(x),X_{t}(p))\to  0$ $(t\to \infty)$.
Put $K_0=\max \{ \| D_xX_{t_1} \| : x\in M \}$. 
By (\ref{XTzp}) we have
\begin{eqnarray*}
d(X_{t}(z),X_{t+t_1}(p))&\le& K_0 d(X_{t-t_1}(z),X_{t}(p)) \to 0~~(t\to \infty).
\end{eqnarray*}
Thus it follows from (\ref{l}) that
\begin{eqnarray*}
\varepsilon &\ge & d(X_t(x), X_t(z)) \\
&\ge& d(X_t(p), X_{t+t_1}(p)) - d(X_t(p), X_{t}(x)) - d(X_{t+t_1}(p), X_{t}(z))\\ 
&\ge& \kappa |t_1|/2 - d(X_t(p), X_{t}(x)) - d(X_{t+t_1}(p), X_{t}(z)) \\
&\to& \kappa |t_1|/2 ~~(t\to \infty),
\end{eqnarray*}
which means that $|t_1|\le 2\varepsilon/\kappa$.
Recall $L_0=1+ 2 K /\kappa$. We have
\begin{eqnarray}
d(X_t(x), X_{t-t_1}(z)) &\le & d(X_t(x), X_t(z))+ d(X_t(z), X_{t-t_1}(z)) \nonumber \\
&\le& \varepsilon + K |t_1|\nonumber \\ 
&\le& (1+ 2 K /\kappa) \varepsilon =L_0 \varepsilon \label{1+2K/k}
\end{eqnarray}
for $t\ge 0$.
Now, take a small $t_2>0$ such that 
\begin{eqnarray*}
K_1=\max \{ \| D_xX_{-t} \| : x\in M, 0\le t \le t_2 \}\le 2. 
\end{eqnarray*}
Put $I=\{ t\in [0,\infty) : \rho^{s}(X_{t}(x),X_{t-t_1}(z))\le 2L_0\varepsilon \}$ and $t_0=\inf I$. 
By (\ref{dss}), (\ref{XTt1z}) and (\ref{1+2K/k}) we have $T\in I$. Assume that $t_0>0$. 
Since
\begin{eqnarray*}
\rho^{s}(X_{t_0-t_2}(x), X_{t_0-t_1-t_2}(z)) &\le & K_1 \rho^{s}(X_{t_0}(x), X_{t_0-t_1}(z)) 
\le 4L_0 \varepsilon \le \mu,
\end{eqnarray*}
by (\ref{dss}) and (\ref{1+2K/k}) we have $t_0-t_2\in I$, which is a contradiction. Thus $0=\inf I$. Therefore 
\begin{eqnarray*}
d^s(x,z)&\le& \rho^{s}(x, X_{-t_1}(z))+ d^s(X_{-t_1}(z),z)\\
&\le& 2L_0 \varepsilon + K|t_1| \le (2L_0+ 2K/\kappa) \varepsilon\le 3L_0\varepsilon.
\end{eqnarray*}
The proposition follows taking $L=3L_0$.
\end{proof}

\color{black}

\begin{lemma}
\label{holonomy}
Let $p\in \Lambda$ be a hyperbolic critical element with $\dim W^{ss}(p)=\dim E^s_p$ and $U$ be a subdisk of $\mathcal{W}^u(p)$ with $\overline{U} \subset \mathcal{W}^u(p)$. Then there exists $\mu>0$ such that $\mathcal{A}(U):=\bigcup_{z\in U}\mathcal{F}^s_{\mu}(z)$ is homeomorphic to $U\times [-\mu,\mu]^{\dim E^s}$.
\end{lemma}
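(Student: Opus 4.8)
The plan is to foliate a neighbourhood of $\overline U$ by the stable disks $\mathcal F^s_\mu(z)$, $z\in\overline U$, and to show that for $\mu>0$ small this exhibits $\mathcal A(U)$ as a product box. First I would record the geometric input: by Lemma~\ref{unstable} we have $\overline U\subset W^u(p)\subset\Lambda$; by Lemma~\ref{cdisk} the set $\overline U$ is a compact $C^1$-disk contained in $W^u(p)$; and by Lemma~\ref{TWu} its tangent spaces satisfy $T_zU\subset E^c_z$, hence $E^s_z\cap T_zU=\{0\}$ for every $z\in\overline U$ since $T_\Lambda M=E^s\oplus E^c$. By Lemma~\ref{s-disk} and the discussion following it, the leaves $\mathcal F^s(z)$ are $C^1$ immersed disks with $T_z\mathcal F^s(z)=E^s_z$, so for every small $\mu$ the local pieces $\mathcal F^s_\mu(z)$ are embedded $C^1$-disks, tangent to $E^s_z$ at $z$, depending continuously (in the $C^1$ topology) on $z\in\overline U$.

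The key step is a uniform transversality estimate: there exists $\mu_0>0$ such that $\mathcal F^s_{2\mu_0}(z)\cap\overline U=\{z\}$ for every $z\in\overline U$. Since $E^s$ is continuous on $\Lambda$ and $w\mapsto T_wU$ is continuous on the compact disk $\overline U$, and since these subspaces meet trivially along the diagonal, a compactness argument produces $\theta>0$ and $r>0$, uniform in $z\in\overline U$, such that $T_wU$ avoids the $\theta$-cone around $E^s_z$ whenever $w\in\overline U$ with $d(w,z)\le r$. Taking $\mu_0$ small enough that $2\mu_0<r$ and that the $C^1$-graph over $E^s_z$ representing $\mathcal F^s_{2\mu_0}(z)$ has slope smaller than $\tan\theta$, any nonconstant $C^1$-arc in $\overline U$ issuing from $z$ immediately leaves $\mathcal F^s_{2\mu_0}(z)$; combined with the inclusion $\mathcal F^s_{2\mu_0}(z)\subset B_{2\mu_0}(z)$ this gives the asserted equality. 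I expect this to be the main obstacle, since it is exactly where the $C^1$ regularity of $W^u(p)$ from Lemma~\ref{cdisk} and the uniform continuity of the invariant bundles over $\overline U$ are really used.

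Granting this, the stable disks over $U$ become pairwise disjoint: if $\mathcal F^s_{\mu_0}(z)\cap\mathcal F^s_{\mu_0}(z')\neq\emptyset$ for some $z,z'\in\overline U$, then $\mathcal F^s(z)=\mathcal F^s(z')$ by Lemma~\ref{invariance}(2), so $z$ and $z'$ lie in a single leaf with $\rho^s(z,z')\le 2\mu_0$, whence $z'\in\mathcal F^s_{2\mu_0}(z)\cap\overline U=\{z\}$ and $z=z'$. To finish I would fix a continuous orthonormal frame of $E^s$ over $\overline U$ and, writing each $\mathcal F^s_{\mu_0}(z)$ as a $C^1$-graph over a box in $E^s_z$, obtain a continuous family of parametrizations $\phi_z\colon[-\mu_0,\mu_0]^{\dim E^s}\to\mathcal F^s_{\mu_0}(z)$. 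The map $\Phi(z,v)=\phi_z(v)$ is a continuous bijection from the compact space $\overline U\times[-\mu_0,\mu_0]^{\dim E^s}$ onto $\bigcup_{z\in\overline U}\mathcal F^s_{\mu_0}(z)$, hence a homeomorphism; restricting it to $U\times[-\mu_0,\mu_0]^{\dim E^s}$ shows that $\mathcal A(U)$ is homeomorphic to $U\times[-\mu_0,\mu_0]^{\dim E^s}$, proving the lemma with $\mu=\mu_0$.
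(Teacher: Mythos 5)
Your argument is correct and follows the same overall route as the paper: establish the uniform transversality $\mathcal F^s_{\mu}(z)\cap\overline U=\{z\}$ over the compact disk $\overline U$ using Lemma~\ref{TWu} and continuity of the bundles, use Lemma~\ref{invariance}(2) to get injectivity of the product map, and parametrize the stable plaques continuously over $\overline U$ via the identification of the fibers $E^s_z$. The only genuine difference lies in the final step: you conclude by observing that a continuous bijection from the compact set $\overline U\times[-\mu,\mu]^{\dim E^s}$ into the Hausdorff manifold $M$ is automatically a homeomorphism, then restrict to the open piece over $U$; the paper instead extends the map $h$ to a slightly larger domain and invokes Brouwer's invariance of domain, which tacitly relies on the dimension count $\dim U+\dim E^s=\dim M$ (itself a consequence of the hypothesis $\dim W^{ss}(p)=\dim E^s_p$). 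Your compactness argument is the more elementary of the two and sidesteps that dimension check, so it is a sound and arguably cleaner way to close the proof.
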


\begin{proof}
Let $U$ be a subdisk of $\mathcal{W}^u(p)$ with $\overline{U} \subset \mathcal{W}^u(p)$.
It follows from Lemma~\ref{unstable} that $U\subset \Lambda$.
For $z\in U$ and $\mu>0$, we set $E^s_z(\mu)=E^s_z \cap \{ \| v \| \le \mu \}$.
Since $T_z \mathcal{F}^s(z)=E^s_z$, 
$\mathcal{F}^s_{\mu}(z)$ is the image of $E^s_z(\mu)$ under the exponential map $\varphi_z$ of $\mathcal{F}^s(z)$.
For every $z\in U$, $E_z^s(\mu)$ can be identified with $E_p^s(\mu)$ by parallel transport.
So we can obtain a surjective continuous map 
$$U \times E^s_p(\mu) \ni (z, v) \mapsto h(z,v):=\varphi_z(v)\in \cA(U).$$

Since $T_z \mathcal{F}^s(z)=E^s_z$ and $T_zW^u(p) \subset E^c_z$ for $z\in U$ (Lemma \ref{TWu}) and since $\overline{U} \subset \mathcal{W}^u(p)$, 
we can take $\mu>0$ small enough such that $\mathcal{F}^s_{\mu}(z) \cap \overline{U}=\{ z\}$ for $z\in \overline{U}$.
Then, by Lemma~\ref{invariance} (2), we have that the above map $h$ is injective.
Since $h$ can be defined on some domain slightly larger than $U \times E^s_p(\mu)$, by the Brouwer's invariance of domain theorem we can show that $h$ is a homeomorphism.
Since $E^s_p(r)$ is a disk of radius $r$ of dimension $\dim E^s$, $E^s_p(r)$ is homeomorphic to the set $[-1,1]^{\dim E^s}$. This proves the lemma.
\end{proof}

Let $\cA(U)$ be as in Lemma \ref{holonomy} and define $\pi^s\colon \cA(U)\to U$
by $\pi^s(x)=z$ if $x\in \mathcal{F}^s_{\mu}(z)$. By Lemma \ref{holonomy}, $\pi^s$ is well-defined. 
Since $\{ \mathcal{F}^s_{\mu}(z) \}_{z\in U}$ is continuous, so is $\pi^s$.

\begin{lemma}\label{finite}
Let $p, q\in \Lambda$ be hyperbolic critical elements with $\dim W^{ss}(p)=\dim E^s<\dim W^{ss}(q)$ and let $\pi^s:\cA(U)\to U$ be as above. Then $\pi^s(X_T(W^u_{\mu}(q)))$ is contained in a finite union of (topological) disks of  $\dim W^u_{\mu}(q)$.
\end{lemma}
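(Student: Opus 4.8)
The plan is to regard $N:=X_T(W^u_\mu(q))$ as a compact $C^1$ submanifold which is everywhere tangent to the central bundle $E^c$, to prove that the stable projection $\pi^s$ is \emph{locally injective} on $N$, and then to package the image by a finite covering argument. First I would record the basic facts about $N$. Since $q$ is a hyperbolic critical element one has $X_T(W^u(q))=W^u(q)$, whence $N\subset W^u(q)\subset\Lambda$, the last inclusion by Lemma~\ref{unstable}; moreover $N$ is compact, being the diffeomorphic image of the compact set $W^u_\mu(q)$. By Lemma~\ref{TWu} we get $T_xN=T_xW^u(q)\subset E^c_x$ for every $x\in N$. Since the fibres of $\pi^s$ are the disks $\mathcal F^s_\mu(z)$, which are tangent to $E^s$, and $E^s\oplus E^c=T_\Lambda M$, this already yields $T_xN\cap E^s_x=\{0\}$ along $N$.

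The heart of the proof is the local injectivity of $\pi^s\vert_N$. Fix $x_0\in N$. Using the continuous extension of the splitting to a neighbourhood of $\Lambda$ (as in the proof of Lemma~\ref{invariance}), together with narrow central and stable cone fields $C^c_\kappa$ and $C^s_\kappa$, I would observe that in a suitable chart around $x_0$ the submanifold $N$ is a graph over $T_{x_0}N\subset E^c_{x_0}$ with arbitrarily small slope, while every leaf $\mathcal F^s_\mu(z)$ meeting a small enough neighbourhood of $x_0$ is a graph over $E^s_{x_0}$ with small slope. Writing $T_{x_0}M=E^s_{x_0}\oplus T_{x_0}N\oplus W$, with $W$ a complement of $T_{x_0}N$ inside $E^c_{x_0}$, and substituting the two graph descriptions, the requirement that a point lie in $N\cap\mathcal F^s_\mu(z)$ collapses to a fixed point equation for a composition of two maps each with small Lipschitz constant, hence for a contraction; consequently $N$ meets each $\mathcal F^s_\mu(z)$ in at most one point within a fixed neighbourhood of $x_0$, which is exactly local injectivity of $\pi^s\vert_N$.

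To conclude, enlarging $U$ slightly (equivalently, working with $\cA(\overline U)$, over which $\pi^s$ extends in the obvious way), the set $N\cap\cA(\overline U)$ is compact, so by the local injectivity just obtained it is covered by finitely many closed disks $D_1,\dots,D_k\subset N$ of dimension $\dim W^u_\mu(q)$ on each of which $\pi^s$ is injective. An injective continuous map from a closed disk into a Hausdorff space is a homeomorphism onto a topological disk of the same dimension, so each $\pi^s(D_i)$ is a topological disk of dimension $\dim W^u_\mu(q)$, and $\pi^s(X_T(W^u_\mu(q)))=\pi^s(N\cap\cA(U))\subset\bigcup_{i=1}^k\pi^s(D_i)$, as claimed.

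I expect the delicate point to be the local injectivity step. The family $\{\mathcal F^s_\mu(z)\}_z$ is only a continuous family of $C^1$ disks, so $\pi^s$ itself need not be differentiable and one cannot simply argue by a rank computation for $D\pi^s\vert_N$; this is what forces the graph-transform/cone-field argument above, whose success depends on the cone estimates being uniform both in the base point $z$ and in the point $x_0\in N$, a uniformity that has to be extracted from the partial hyperbolicity of $\Lambda$ together with the construction of the stable disks $\mathcal F^s$.
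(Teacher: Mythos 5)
Your proof is correct, but it takes a somewhat different route than the paper's. The paper begins by invoking Lemma~\ref{cdisk} to extend each local piece $W^u_x(q)$ of $X_T(W^u_\mu(q))$ to a full $\dim E^c$-dimensional $C^1$-disk $D(x)\subset\cA(U')$ tangent to $E^c_x$; since $\dim D(x)=\dim U$ and $D(x)$ is transverse to the stable leaves, the map $\pi^s|_{D(x)}$ is a homeomorphism onto an open subset of $U$ by invariance of domain (exactly the strategy of the proof of Lemma~\ref{holonomy}), so $\pi^s(W^u_x(q))$ is a topological disk of dimension $\dim W^u(q)$, and a finite subcover finishes. You skip the disk extension, work directly with the lower-dimensional submanifold $N=X_T(W^u_\mu(q))$, and establish local injectivity of $\pi^s|_N$ by a cone-field/graph-transform fixed-point computation, then finish with compactness and the fact that a continuous injection of a compact disk into a Hausdorff space is a homeomorphism onto its image. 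This is a valid alternative: your observation that $\pi^s$ is merely continuous, so no rank computation is available, is precisely the difficulty the paper sidesteps by passing to a transversal of the right dimension where invariance of domain applies; your contraction argument is the hands-on counterpart of that step. The paper's packaging buys modularity (it reuses Lemma~\ref{cdisk} and the already-established product structure of $\cA(U)$), while yours is more self-contained at the price of reproducing some of the cone-field bookkeeping that those lemmas already encapsulate.
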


\begin{proof}
Without loss of generality we may assume that $\cA(U')$ can be defined for some open disk $U'$ in $W^{u}(p)$ satisfying $\bar{U} \subset U'$.
It follows from Lemma \ref{cdisk} that for $x\in \cA(\bar{U}) \cap X_T(W^u_{\mu}(q))(\subset \cA(\bar{U}) \cap W^u(q))$, there exist a neighborhood $U(x)$ of $x$ and a $C^1$-disk $D(x)$ such that
\begin{eqnarray*}
x\in W^u_{x}(q) \subset D(x) \subset \cA(U') \text{ and } T_xD(x)=E^c_x,
\end{eqnarray*}
where $W^u_x(q)$ denotes the connected component of $W^u(q) \cap U(x)$ containing $x$.
Since $\pi^s: D(x) \to \pi^s(D(x))$ is a homeomorphism, $\pi^s(W^u_{x}(q))$ is a topological disk of $\dim W^u(q)$.

Consider an open cover ${\mathcal D}=\{ D(x) \}$ of $\cA(\bar{U}) \cap X_T(W^u_{\mu}(q))$. By the compactness we can take a finite subcover ${\mathcal B}=\{ D(x_i) \}$ of ${\mathcal D}$. Then $\pi^s(X_T(W^u_{\mu}(q))\subset \cup_i \pi^s(W^u_{x_i}(q))$, which proves the Lemma.
\end{proof}

\color{black}

\section{Proof of Theorem \ref{thm:main2}}\label{proofs}

The aim of this section is to prove our main result. 
Let $X\in \mathfrak{X}^1(M)$ be a $C^1$ vector field so that the flow $(X_t)_{t\in\mathbb R}$ admits a partially
hyperbolic attractor $\Lambda$ with splitting $T_\Lambda M= E^s\oplus E^c$ and assume that there are two 
hyperbolic critical elements  $p$ and $q$ such that ${\rm dim}\;E^s={\rm dim}\;W^{ss}(p)<{\rm dim}\;W^{ss}(q)$. 
The key idea involved in the proof of Theorem \ref{thm:main2} is to notice that the {weak} specification property 
implies that strong stable and unstable manifolds intersect in a strong way. 
It is well known that the {weak} specification property implies that there exists a time $T>0$ depending only on the size of the local 
stable/unstable manifolds so that the union of the image of the local strong unstable manifold 
of a hyperbolic critical element by the maps $(X_t)_{t\in [0,T]}$ must intersect the local strong stable manifolds of any other hyperbolic critical element (see e.g.~\cite{SSY} and \cite{ AST} for statements in the discrete-time and continuous-time settings, respectively). By {weak} specification property this non-empty intersection property condition should hold not only at hyperbolic critical elements but whenever two points admit stable and unstable manifolds. Due to the assumption of partial hyperbolicity and different indexes, here we can choose the hyperbolic critical elements properly to prove that there exists a uniform size 
and a point on the strong unstable manifold of a critical element whose image of its local unstable disk does not
intersect the local stable disk of the other hyperbolic critical element  (c.f. statement of the Sublemma below).

Since singularities and periodic orbits have different structure at local coordinates, which is reflected by the fact that 
stable/unstable and strong stable/unstable manifolds coincide at singularities while this does not happen at periodic points,  
it is natural to subdivide the proof in four cases, corresponding to the ones where the two hyperbolic critical elements 
$p,q$ are either singularities/periodic orbits and also on the dimension of their strong stable manifold.

{To reach a contradiction
we assume} that $\Lambda$ has the weak specification property.
Then $(X_t)_{t\in \mathbb R}$ is topologically mixing (c.f. \cite[Lemma~3.1]{AST}) and it admits neither attracting nor 
repelling critical elements. 
There are four cases to consider depending on whether $p$ and $q$
are singularities or periodic orbits.

\vspace{.15cm}
\noindent \textbf{First case:} \textit{$p$ and $q$ are singularities} 
\vspace{.15cm}

In this case we remark $W^u(p)=W^{uu}(p)$ and $W^u(q)=W^{uu}(q)$. Take an open disk $D_0 = W_\mu^{u}(p) \subset \Lambda$ with respect to the induced topology on $W^{uu}(p)$, which is transverse to the local stable foliation through points of $D_0$ (see Lemma \ref{TWu}). 
It follows from Lemma~\ref{holonomy} that if $\mu>0$ is small then
$
\mathcal{A}(D_0):=\bigcup_{z\in D_0}\mathcal{F}^s_{\mu}(z)
$
is homeomorphic to $D_0\times [-\mu,\mu]^{\dim\ E^s}$,
where we set
$$
\mathcal{F}^s_{\mu}(z):=\{w\in \mathcal{F}^s(z):\rho^s(z,w)\le\mu\}
$$
and $\rho^s$ is the distance in $\mathcal{F}^s(z)$ induced by the Riemannian metric. 
By the choice of $D_0$ we have 
$\dim D_0=\dim W^{uu}(p)=\dim E^c_p.$
Let $\epsilon_1>0$ and $L>0$ (depending on $p$) be given by Proposition~\ref{hairu2}. 
We claim the following:

\vspace{.15cm}
\noindent {\bf Sublemma:} 
There are $\mu>0$,
$\epsilon\in(0,\epsilon_1)$ with $\epsilon<\mu/L$ and $x\in W^{ss}(p)$ so that if $T=T(\epsilon)$ 
is given by the weak specification property then
$
X_{-T}(\gamma_{\mu}^s(x))\cap W_{\mu}^u(q)=\emptyset.
$

\begin{proof}[Proof of the Sublemma]
Take $\mu>0$ so that $\mathcal{A}(D_0):=\bigcup_{z\in D_0}\mathcal{F}^s_{\mu}(z)$ is homeomorphic to $D_0\times [-\mu,\mu]^{\dim E^s}$.
Set $\epsilon:=\min\{\mu/5,\epsilon_1/2\}$ and let $T(\epsilon)$ be as above.
By Lemma \ref{finite}, the projection $\pi^s(X_{T}(W^u_\mu(q)))$ along the stable holonomy is contained in a finite union of disks of ${\rm dim}\;W^{u}_{\mu}(q)={\rm dim}\;W^{uu}(q)<{\rm dim}\;D_0$ (see Figure~\ref{fig:1} below). Here the map $\pi^s\colon \cA(D_0)\to D_0$ is defined by $\pi^s(x)=z$ if $x\in \mathcal{F}^s_{\mu}(z)$.
\begin{figure}[htbp]
 \begin{center}
  \includegraphics[width=100mm]{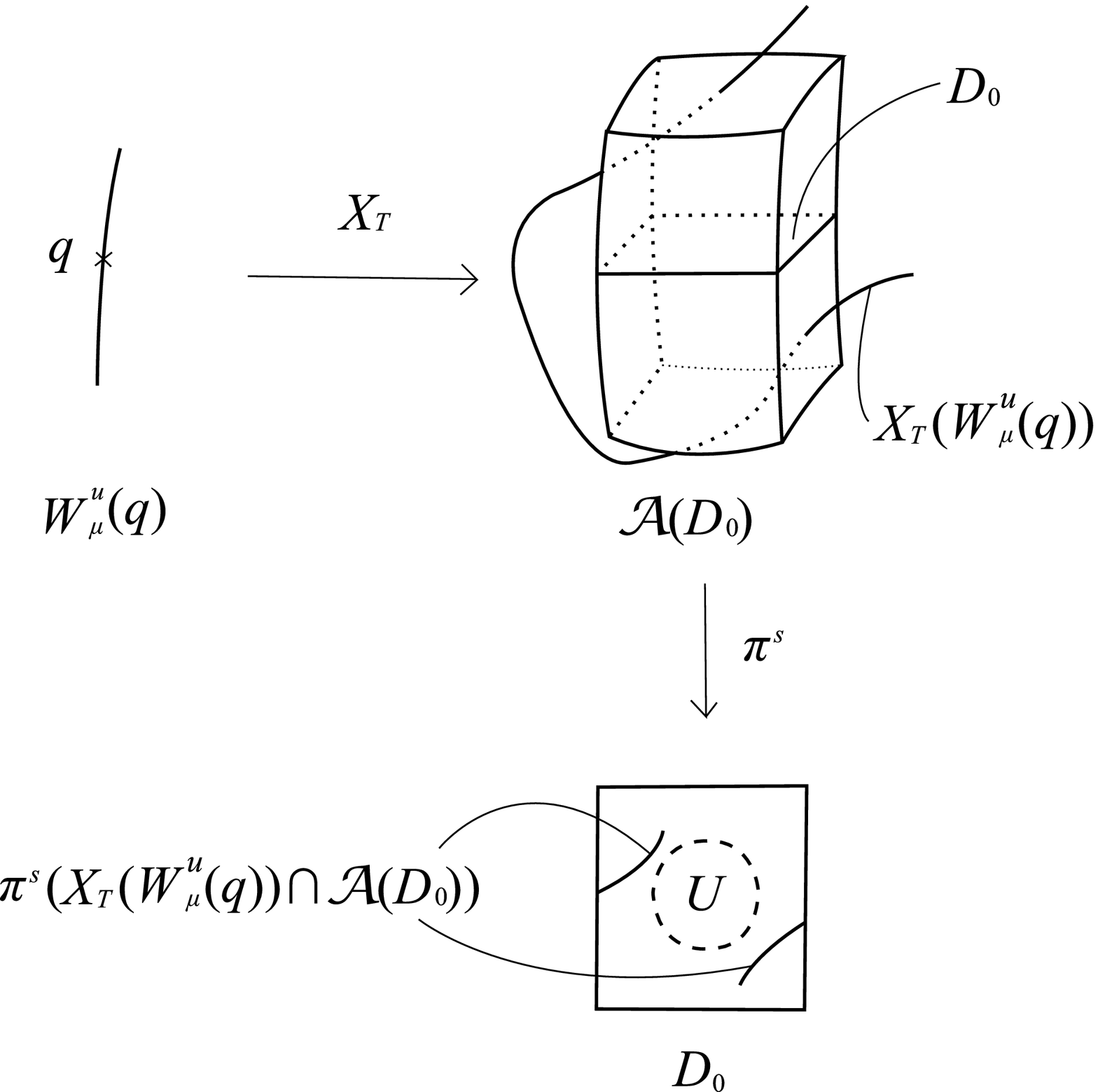}
 \end{center}
 \caption{}
 \label{fig:1}
\end{figure}
Since the complement of $\pi^s(X_{T}(W^u_\mu(q)))$ is open in $D_0$, 
there exists an open disk $U\subset D_0$ so that 
$\mathcal A(U) \cap X_T(W^u_\mu(q)) =\emptyset$.
Since $U\subset \Lambda$ and the stable manifold $W^s(p)$ is dense in $\Lambda$ (Lemma \ref{dense}), then there exists $x\in D_0 \cap \cA(U) \cap W^s(p)$ with 
$\cF_\mu^s(x) \subset \mathcal A(U)$ disjoint from $X_T(W_\mu^u(q))$.
Hence $X_{-T}(\gamma_\mu^s(x)) \cap W_\mu^u(q) =\emptyset $
\end{proof}

We proceed with the proof of 
Theorem \ref{thm:main2} in the first case.
 On the one hand, 
by the {sublemma} there exist $\mu>0$,  $0<\epsilon< \min\{\mu/L, \epsilon_1\}$ and 
$x\in W^s(p)$  so that $X_{-T}(\gamma_{\mu}^s(x))$ does not intersect $W_{\mu}^{u}(q)$,
where $T=T(\epsilon)>0$ is given by the specification property. 

On the other hand, for the singularity $q$ and $x\in W^{ss}(p)$ given by the previous {sublemma}, 
by compactness of $\Lambda$ and the specification property 
there exists $z\in \Lambda$ such that 
$d(X_t(z),X_t(x))\le\epsilon$
and 
$d(X_{-t}(X_{-T}(z)),X_{-t}(q))\le\epsilon$ 
for all $t\ge 0$.
Since $\epsilon\in (0,\epsilon_1)$, Proposition~\ref{hairu2} guarantees that $z\in X_{-T}(\gamma_{L\epsilon}^s(x))\cap W_{L\epsilon}^u(q)$, which is a contradiction
since $L\epsilon<\mu$. This finishes the proof of Theorem~\ref{thm:main2} in this first case.

\vspace{.35cm}
\noindent \textbf{Second case:} \textit{$p$ and $q$ are periodic orbits} 
\vspace{.15cm}

The strategy is again to deduce a contradiction by assuming the specification property. 
Given $\mu>0$ consider the disk $D_0=W^u_{\mu}(p):= \cup_{|t|\le \mu} X_t(W^{uu}_\mu(p))$ containing $p$ and the strong stable holonomy $\pi^s$ defined in
$
\mathcal{A}(D_0):=\bigcup_{z\in D_0}\mathcal{F}^s_{\mu}(z).
$
Take $\mu>0$ so that $4\mu$ is smaller than the prime periods of $p$ and $q$, and $\mathcal{A}(D_0)$ is homeomorphic to $D_0\times [-\mu,\mu]^{\dim E^s}$ (Lemma~\ref{holonomy}).

Let $\pi:D_0\to W^{uu}_\mu(p)$ be the projection along the orbit, i.e. if $x=X_t(z)\in D_0$ for some $|t|\le \mu$ and $z\in W^{uu}_\mu(p)$, then $\pi(x)=z$.
By definition we have $\pi(\pi^s(\mathcal{A}(D_0)))=W^{uu}_\mu(p)$, which means that $\mathcal{A}(D_0)=\bigcup_{z\in W^{uu}_\mu(p)} (\pi^s)^{-1}( \pi^{-1}(\{ z \})).
$
By Lemma \ref{invariance}, if $X_t(x)\in \mathcal{A}(D_0)$ for $t\in [0,t_0]$, then 
$$X_t \circ \pi^s(x)=\pi^s \circ X_t(x) ~~(t\in [0,t_0]).$$
This implies that for $z\in W^{uu}_\mu(p)$, we have
$$
(\pi^s)^{-1}( \pi^{-1}(\{ z \}))=\bigcup_{|t|\le \mu} X_t(\mathcal{F}^s_{\mu}(z)).
$$
Thus $\{ (\pi^s)^{-1}( \pi^{-1}(\{ z \})) \}_{z\in W^{uu}_\mu(p)}$ is a $C^1$-continuous family of $(1+\dim E^s)$-dimensional disks in $\mathcal{A}(D_0)$. 
By Lemma \ref{perstable}, if $(\pi^s)^{-1}( \pi^{-1}(\{ z \})) \cap W^{ss}(p)\not= \emptyset$, then $$(\pi^s)^{-1}( \pi^{-1}(\{ z \}))\subset W^s(p).$$
We can take $\tau>0$ such that $(\pi^s)^{-1}( \pi^{-1}(\{ z \}))$ contains a $(1+\dim E^s)$-dimensional ball centered at $z$ with radius $\tau$ for $z\in W^{uu}_\mu(p)$.

Let $\epsilon_1>0$ and $L>0$ (depending on $p$) be as in Proposition \ref{hairu2}.
Then 
(up to time reversal in Proposition \ref{hairu2})
we can choose $0<\epsilon\le \min \{ \epsilon_1, \tau/2L \}$ such that for $x\in \Lambda$, if $d(X_{-t}(x), X_{-t}(q))\le \epsilon$ for $t\ge 0$, then 
\begin{eqnarray}\label{Wumu}
x\in W^u_{\mu}(q)=\bigcup_{|t|\le \mu} X_t(W^{uu}_{\mu}(q)).
\end{eqnarray}
By definition, $W^u_{\mu}(q)$ is foliated by pieces of orbits of points in $W^{uu}_{\mu}(q)$ and so $\dim W^{u}_{\mu}(q)=1+\dim W^{uu}_{\mu}(q)=1+\dim F^u_q$.
Let $T=T(\epsilon)>0$ be as in the definition of the specification property.
Then $X_T(W_\mu^u(q))$ is also foliated by pieces of orbits.

On the one hand, since $X_T(W_\mu^u(q))$ is a $(1+\dim F^u_q)$-dimensional submanifold, 
by Lemma \ref{finite} we have that $\pi^s (X_T(W_\mu^u(q)) \cap {\mathcal A}(D_0))$ is contained in a finite union of compact disks of dimension $1+\dim F^u_q$. Since $X_t \circ \pi^s=\pi^s \circ X_t$ in ${\mathcal A}(D_0)$ (see Lemma \ref{invariance}), such compact disks are also foliated by pieces of orbits.

Let $\pi:D_0\to W^{uu}_\mu(p)$ be as above. Since $\pi$ reduces each piece of orbit to one point, $(\pi \circ \pi^s) (X_T(W_\mu^u(q))\cap {\mathcal A}(D_0))$ is contained in a finite union of compact disks of $\dim F^u_q<\dim F^u_p=\dim W^{uu}_{\mu}(p)$.
Since the complement of $(\pi \circ \pi^s) (X_T(W_\mu^u(q))\cap {\mathcal A}(D_0))$ is open
and dense in $W^{uu}_{\mu}(p)$, there exists an open disk $U\subset W^{uu}_{\mu}(p)$ so that 
$$U \cap (\pi \circ \pi^s) (X_T(W_\mu^u(q))\cap {\mathcal A}(D_0))=\emptyset,$$
which means that
$${\mathcal A}(\pi^{-1}(U)) \cap X_T(W^u_\mu(q))=(\pi^s)^{-1}(\pi^{-1}(U)) \cap X_T(W^u_\mu(q)) =\emptyset,$$
as illustrated by Figure~\ref{fig:2} below.

\begin{figure}[htbp]
 \begin{center}
  \includegraphics[width=100mm]{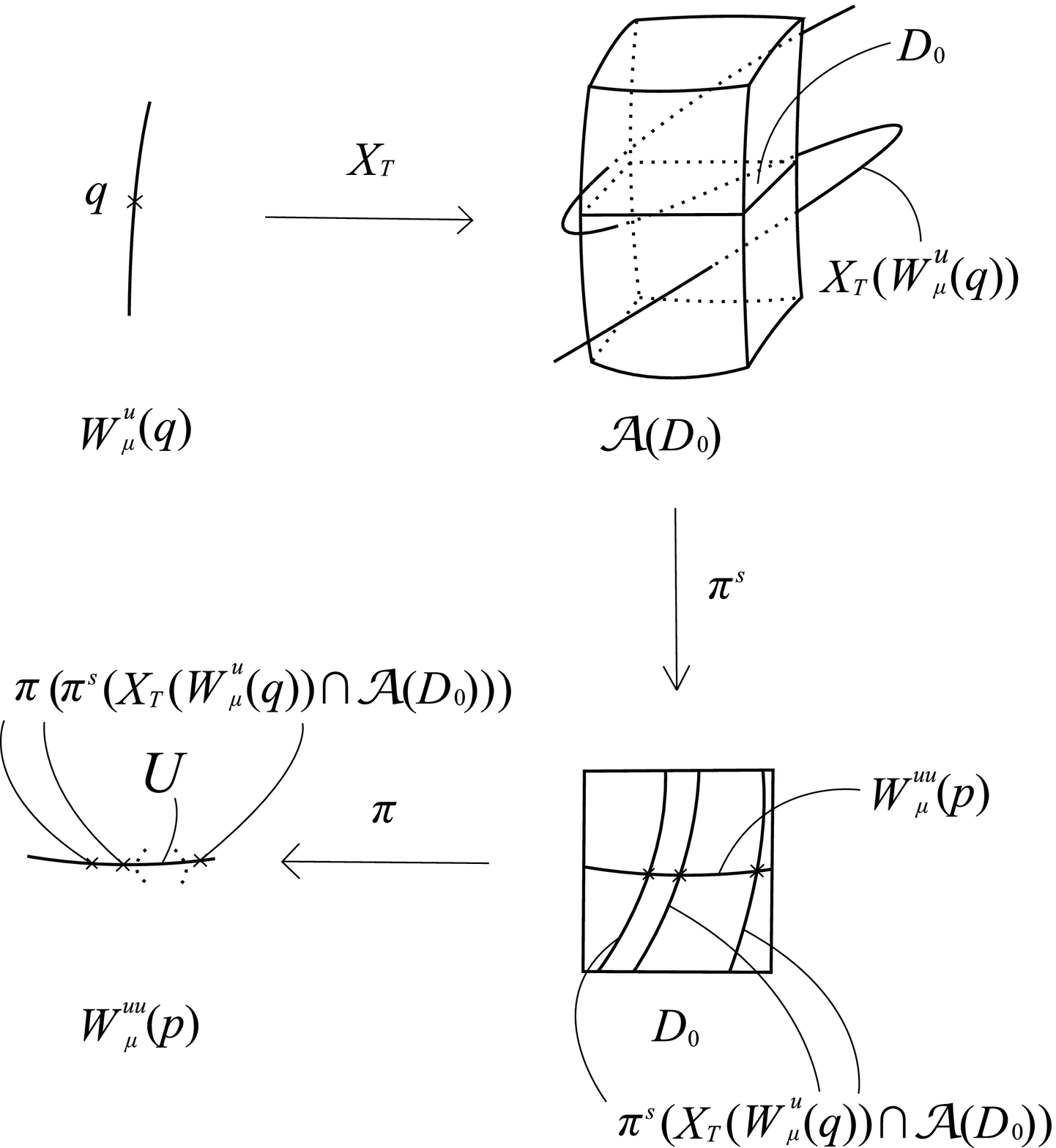}
 \end{center}
 \caption{}
 \label{fig:2}
\end{figure}

On the other hand, since $W^{ss}(p)$ is dense in $\Lambda$ (Lemma \ref{dense}), we have ${\mathcal A}(\pi^{-1}(U)) \cap W^{ss}(p)\not= \emptyset$. Furthermore, we can choose a point $w\in \pi^{-1}(U) \cap W^{ss}(p) (\subset D_0)$ which is so close to $U(\subset W^{uu}_{\mu}(p))$ that $$\gamma^s_{L\epsilon}(w):=\{z\in W^{s}(p):d^s(w,z)\le L\epsilon\}\subset {\mathcal A}(\pi^{-1}(U))$$
since $\epsilon <\tau/2L$.

By the specification property, there exists $y\in\Lambda$ so that  
$
d(X_{-t}(y), X_{-t}(q)) \le \epsilon
$
and
$d(X_{t}(X_T(y)), X_{t}(w)) \le \epsilon	$
for all $t\ge 0$. By (\ref{Wumu}) and Proposition \ref{hairu2}, we have
$y \in W_{\mu}^{u}(q)$ and $X_T(y) \in \gamma^s_{L\epsilon}(w)$. Since $\gamma^s_{L\epsilon}(w) \subset {\mathcal A}(\pi^{-1}(U))$,
we have ${\mathcal A}(\pi^{-1}(U)) \cap X_T(W^u_\mu(q)) \not=\emptyset$, which is a contradiction.

\vspace{.35cm}
\noindent \textbf{Third case:} \textit{$p$ is a singularity and $q$ is a periodic orbit} 
\vspace{.15cm}

The strategy is again to deduce a contradiction by assuming the specification property. 
Let us observe that in this setting
$$
\dim W^{u}(q)= 1+\dim F^u_q = n-\dim F^s_q
	< n-\dim F^s_p 
	= \dim W^{u}(p).
$$
Thus, we can apply the argument proving that the complement of the set $\pi^s(X_T(W_\mu^u(q)))$
(here $\pi^s$ denotes again the strong strong stable holonomy map in a neighborhood of $p$ on 
a disk $D_0 \subset W^u(p)$) contains open sets $U \subset D_0$, as well as the 
proof that this property prevents specification.

\begin{remark}
Let us mention that simpler third case is only relevant in the dimension larger than or equal to 4. In fact, if $\dim M=3$ then  necessarily $\dim F^u_q=\dim F^s_q=1$ and $\dim F^s_p<\dim F^s_q$ leads to a 
contradiction to the fact that $F^s$ is non-trivial.
\end{remark}

\vspace{.35cm}
\noindent \textbf{Fourth case:} \textit{$q$ is a singularity and $p$ is a periodic orbit} 
\vspace{.15cm}

To finish the proof of Theorem~\ref{thm:main2} it remains to deal with the case that $q$ is a singularity 
and $p$ is a periodic orbit. Now the relations
$
\dim M = \dim F^s_q+ \dim F^u_q
$
and also
$
\dim M = \dim F^s_p+ \dim F^u_p+1
$
together with $\dim F^s_p \le \dim F^s_q-1$ yield that $\dim F^u_q \le \dim F^u_p$. 
If the strict inequality holds we can proceed as in the third case. Otherwise, the difficulty occurs if $\dim F^u_q = \dim F^u_p$. 
Nevertheless, $\pi^s$ is a projection defined in a neighborhood of $p$ onto the local weak unstable manifold $W^u(p)$, and $\dim W^u(p)=1+\dim F^u_p>\dim F^u_q$. The argument works as before:
taking $D_0=W^u_\mu(p)$ it follows that $\pi^s (X_T(W^{uu}_{\mu}(q))\cap {\mathcal A}(D_0))$ is contained in a finite union of disks of dimension $\dim F^u_q<1+\dim F^u_p=\dim W^u(p)$. 

Since 
$
X_t(x)\in W^{uu}(q)=W^{u}(q)
$
for $x\in W^{uu}_{\mu}(q)$ and $t\in {\mathbb R}$,
$W^{uu}_{\mu}(q)\setminus \{ q\}$ is foliated by pieces of orbits.
Thus $\pi^s (X_T(W_\mu^{uu}(q)) \cap {\mathcal A}(D_0))$ is contained in a finite union of disks which are foliated by pieces of orbits.
Let $\pi:D_0\to W^{uu}_\mu(p)$ be the projection along the orbit.
Then the dimension of $(\pi \circ \pi^s) (X_T(W_\mu^{uu}(q))\cap {\mathcal A}(D_0))$ is less than $\dim F^u_q-1<\dim F^u_p=\dim W^{uu}_{\mu}(p)$.
Since the complement of $(\pi \circ \pi^s) (X_T(W_\mu^{uu}(q))\cap {\mathcal A}(D_0))$ is dense and open in $W^{uu}_{\mu}(p)$, there exists an open disk $U\subset W^{uu}_{\mu}(p)$ so that 
$$U \cap (\pi \circ \pi^s) (X_T(W_\mu^u(q))\cap {\mathcal A}(D_0))=\emptyset,$$
which means that ${\mathcal A}(\pi^{-1}(U)) \cap X_T(W^u_\mu(q)) =\emptyset$. The proof follows the 
same lines as above.

\begin{remark}
In fact, in the case of the geometric Lorenz attractor in dimension 3 necessarily $\dim F^u_p=\dim F^s_p=1$
and for the singularity $\dim F^u_p=1$ and consequently $\dim F^u_p=\dim F^u_q$ leading to the 
fourth situation.
\end{remark}

\section{Proof of the Corollaries}\label{proofs2}

\subsection{Proof of Corollary~\ref{cor1}}

Let $p,q$ be hyperbolic critical elements so that ${\rm ind}^s(p) \neq {\rm ind}^s(q)$. 
{Here we set ${\rm ind}^s(p) =\dim W^{ss}(p)$.}
Observe that due to {strong hyperbolicy} $p$ and $q$ are neither attractors nor repellers.
We shall prove that $X$ or $-X$ satisfies the conditions of  Theorem~\ref{thm:main2} and, consequently, 
 $X$ does not satisfy the weak specification property.
For simplicity, we assume that $\text{dim} M = 4$.

(i) If $p,q$ are both periodic points then necessarily {${\rm ind}^s(p)\in \{1,2\}$} or {${\rm ind}^s(q)\in \{1,2\}$.}
Without loss of generality assume that {${\rm ind}^s(p)\in \{1,2\}$.} 
If ${\rm ind}^s(p) =1$ then $X$ satisfies the conditions of 
Theorem~\ref{thm:main2}. If {${\rm ind}^s(p) =2$} then $-X$ satisfies the conditions of Theorem~\ref{thm:main2}.

(ii) If $p,q$ are both singularities then ${\rm ind}^s(p)$ and ${\rm ind}^s(q)$ cannot be simultaneously $2$. The 
argument is completely analogous to the previous case.

(iii) Assume that $p$ is a periodic point and $q$ is a singularity. If ${\rm ind}^s(p)=1=\dim E^s$,
then $X$ satisfies the assumptions of Theorem~\ref{thm:main2}, since 
${\rm ind}^s(q) \neq 1$. If ${\rm ind}^s(p)=2$ then 
${\rm ind}^u(p)=1$ and consider $-X$. This completes the proof of the corollary.

\subsection{Proof of Corollary~\ref{nh}}

Put ${\mathcal U}=\mathcal{RNTF}\cap\mathcal{SPHF}_3(M)$. Since $X\in {\mathcal U}$ is robustly transitive, $X$ has no singularity (see \cite{Vi03}).
We note that ${\mathcal U} \cap {\mathcal G}^1(M)= \emptyset$ where $\mathcal{G}^1(M)$ is the class of star-flows 
(i.e. flows such that all critical elements are hyperbolic  $C^1$-robustly).
Indeed, to reach a contradiction we assume that there exists $X\in {\mathcal U} \cap {\mathcal G}^1(M)$. 
In \cite{GW06} Gan and Wen showed that if $X\in {\mathcal G}^1(M)$ has no singularity, then the nonwandering set of $X$ is hyperbolic, which means that $X$ is Anosov. This contradicts the fact that $X$ is not hyperbolic.

Let $X\in {\mathcal U}$. Since $X\not\in {\mathcal G}^1(M)$, $X$ can be approximated by a flow $Y\in {\mathcal U}$ having a non-hyperbolic periodic orbit. 
By the proof of Theorem~4.3 in \cite{AST}, we can find $Z\in {\mathcal U}$ arbitrarily close to $Y$ and having two hyperbolic periodic orbits with different indices, which is a $C^1$-open condition. 
Thus Corollary \ref{c3} implies that $Z$ does not satisfy the weak specification property $C^1$-robustly.

\subsection{Proof of Corollary~\ref{dim4}}

Following \cite{Vi03}, given  $X\in \mathcal{RNTF}$ it follows that $X$ has no singularity and the linear 
Poincar\'e flow $P^t=\pi_{\mathcal N_{X_t(x)}} \circ DX_t(x): \mathcal N_x \to \mathcal N_{X_t(x)}$  admits a dominated splitting: for every $x\in M$ there exists a $DP^t$-invariant and continuous decomposition of the normal space
$\mathcal N_x=E_x\oplus F_x$ and constants $C>0$ and $0<\lambda<1$ so that 
$$
\| DP^t |E_x\| \; \| (DP^t |F_{X_t(x)})^{-1}\| \le C \lambda^t 
$$
for every $t\ge 0$. 

We now proceed to prove that the one-dimensional subbunddle is hyperbolic. Assume for simplicity
that $\dim E=1$ and $\dim F=2$. Note that a robustly transitive flow does not have repelling periodic 
orbits. We claim that there exists $\delta>0$ such that 
$
|\lambda_E(p)| \le (1-\delta)^T < 1 
$
for every periodic point $p$ of period $T$, where $\lambda_E(p)$ denotes the 
eigenvalues of $DP^T(p)\mid_{E_p}$ (as otherwise one could use the Franks' lemma for flows 
as in the proof of \cite[Lemma~{4.5}]{DPU} to create a repelling periodic orbit).  
The proof that $E$ is uniformly contracting follows the same lines as the 
proof of the stability conjecture using the ergodic closing lemma given by Wen
(c.f. Step~3 in ~\cite[page 347]{Wen}).

We put $E^c_x=\langle X(x) \rangle \oplus F_x(\subset T_xM)$ for $x\in M$. Here $\langle X(x) \rangle$ denotes the one dimensional subspace generated by $X(x)$. Then $E^c$ is a $(DX_t)_{t\in \mathbb R}$-invariant subbundle. Since $E$ is uniformly contracting, as in the proof of \cite[Theorem~{1.5}]{Sal}, we can define a $(DX_t)_{t\in \mathbb R}$-invariant continuous one-dimentional subbundle $E^s\subset \langle X \rangle \oplus E$ such that the splitting $E^s\oplus E^c$ is partially hyperbolic.

By \cite[Theorem~A]{GW06} every non-singular star-flow is Axiom A without cycles. Since 
$X$ generates a non-hyperbolic robustly transitive flow without singularities then $X\not\in {\mathcal G}^1(M)$
and, consequently, $X$ can be $C^1$-approximated by a flow $Z\in {\mathcal U}$ with
two hyperbolic periodic orbits with different indices (\cite[Theorem~4.3]{AST}), which is a $C^1$-open condition. 
This finishes the proof of the corollary.

\section*{Acknowledgements}
P.V. was partially supported by a CNPq-Brazil post-doctoral fellowship at Universidade do Porto and N.S. is partially supported by KAKENHI (15K04902).
P.V. and N.S. are grateful to the organizers of the conference \emph{ICM 2014 Satellite Conference on Dynamical Systems and Related 
Topics, Daejeon-Korea} for the hospitality in Korea, where part of this work was developed. The authors are 
greatly indebted to the anonymous referees for their criticism and an exhaustive list of suggestions that helped 
to improve significantly the article.


\end{document}